\theoremstyle{plain}
\newtheorem{theorem}{Theorem}
\newtheorem{lemma}[theorem]{Lemma}
\newtheorem{proposition}[theorem]{Proposition}
\theoremstyle{definition}
\newtheorem{definition}[theorem]{Definition}
\newtheorem{remark}[theorem]{Remark}
\numberwithin{equation}{section}
\numberwithin{theorem}{section}
\author[1]{Xiaojun Chang \footnote{changxj100@nenu.edu.cn}}
\author[2,3,4,5,6]{Vicen$\c{t}$iu D. R$\check{a}$dulescu\footnote{radulescu@inf.ucv.ro}} % \affil{Dipartimento di Matematica, Politecnico di Milano, Via Bonardi 9, 20133, Milano, Italy}
\author[1]{Yuxuan Zhang\footnote{zhangyx595@nenu.edu.cn}}  
\affil[1]{School of Mathematics and Statistics \& Center for Mathematics and Interdisciplinary Sciences,
 Northeast Normal University, Changchun 130024, Jilin,
PR China}
\affil[2]{Faculty of Applied Mathematics, AGH University of Krak\'ow, al. Mickiewicza 30, 30-059 Krak\'ow, Poland}
\affil[3]{Simion Stoilow Institute of Mathematics of the Romanian Academy, 21 Calea Grivi\c{t}ei, 010702 Bucharest, Romania}
\affil[4]{Department of Mathematics, University of Craiova, Street A.I. Cuza 13, 200585 Craiova, Romania}
\affil[5]{Brno University of Technology, Faculty of Electrical Engineering and Communication, Technick\'a 3058/10, Brno 61600, Czech Republic}
\affil[6]{School of Mathematics, Zhejiang Normal University, Jinhua, Zhejiang 321004, PR China}
\title{Solutions with prescribed mass for $L^2$-supercritical NLS equations under Neumann boundary conditions}
\date{}
\begin{document}

\maketitle

\begin{abstract}
\noindent In this paper, we investigate the following nonlinear Schr\"odinger equation with Neumann boundary conditions:
\begin{equation*}
\begin{cases}
	-\Delta u+ \lambda u= f(u)  & {\rm in} \,~ \Omega,\\
\displaystyle\frac{\partial u}{\partial \nu}=0  \, &{\rm on}\,~\partial \Omega
\end{cases}
\end{equation*}
coupled with a constraint condition:
    \begin{equation*}
         \int_{\Omega}|u|^2 dx=c,
    \end{equation*}
where $\Omega\subset \mathbb{R}^N(N\ge3)$ denotes a smooth bounded domain, $\nu$ represents the unit outer normal vector to $\partial \Omega$, $c$ is a positive constant, and $\lambda$ acts as a Lagrange multiplier. When the nonlinearity $f$ exhibits a general mass supercritical growth at infinity, we establish the existence of normalized solutions, which are not necessarily positive solutions and can be characterized as mountain pass type critical points of the associated constraint functional.
Our approach provides a uniform treatment of various nonlinearities, including cases such as $f(u)=|u|^{p-2}u$, $|u|^{q-2}u+ |u|^{p-2}u$, and $-|u|^{q-2}u+|u|^{p-2}u$, where $2<q<2+\frac{4}{N}<p< 2^*$. The result is obtained through a combination of a minimax principle with Morse index information for constrained functionals and a novel blow-up analysis for the NLS equation under Neumann boundary conditions.
\end{abstract}

\smallskip
{\small \noindent \text{\bf Key Words:} Normalized solutions; Nonlinear Schr\"odinger equation; 
	Bounded domains; Neumann boundary conditions; Variational methods.\\
	\text{\bf 2020 Mathematics Subject Classification:} 35A15, 35J20, 35J60, 35Q55}

\section{Introduction and main results}\label{intro}

This paper is concerned with the following Neumann problem for nonlinear Schr\"odinger equation(NLS):
\begin{equation}\label{schrodinger}
\begin{cases}
-\Delta u+ \lambda u= f(u)  & {\rm in} \,~ \Omega,\\
\displaystyle\frac{\partial u}{\partial \nu}=0  \, &{\rm on}\,~\partial \Omega
\end{cases}
\end{equation}
 with a mass constraint
 \begin{equation}\label{1.2}
 \int_\Omega |u|^2 dx=c,
 \end{equation}
where $\Omega\subset \mathbb{R}^N(N\ge3)$ is a smooth bounded domain, $c>0$ is a given constant, $\nu$ stands for the unit outer normal to $\partial \Omega$, and $\lambda$ serves as a Lagrange multiplier.

The features of problem \eqref{schrodinger}--\eqref{1.2} are the following:
\begin{itemize}
	\item [$(\text{i})$] The presence of the Neumann boundary condition is quite rare in the analysis of solutions with prescribed mass.
	
	\item [$(\text{ii})$] The reaction exhibits a general mass supercritical growth at infinity.
	
	\item [$(\text{iii})$] The proof relies on powerful techniques, such as Morse theory and a new blow-up analysis for the NLS equation.
	
	\item [$(\text{iv})$] The analysis presented in this paper can be extended to other classes of stationary problems, including  biharmonic elliptic equations and Schr\"odinger-Poisson equations. 
\end{itemize}

	The analysis of solutions with prescribed mass is particularly significant from a
physical point of view, in relationship with phenomena arising in nonlinear optics, the theory of water waves, etc. Indeed, solutions with prescribed $L^2$-norm are especially relevant since this quantity is
preserved along the time evolution. Moreover, the variational characterization of such solutions is often a strong help to analyze
their orbital stability and instability properties, see \cite{CL1,JJLV2022,JL2022,Soave-2020JDE,Soave-2020JFA}. 
%The study developed in this paper is inspired by the interest of physicists in the existence of normalized solutions. Specifically, the concept of prescribed mass appears in nonlinear optics and in the theory of Bose-Einstein condensates, see \cite{F2010,Malomed2008}.

The investigation of nonlinear Neumann problem \eqref{schrodinger} finds applications in various fields. One of the main motivations stems from the analysis of standing waves in the form $\psi(t,x) = e^{-i\lambda t}u(x)$, where $\lambda \in \mathbb{R}$ and $u:\mathbb{R}^N\to \mathbb{R}$, for the time-dependent nonlinear Schr\"odinger equation:
\begin{equation}\label{tNLS}
	i\partial_t \psi +\Delta \psi + g(|\psi|)\psi =0,~~(t, x)\in \mathbb{R}\times \Omega
\end{equation}
subject to Neumann boundary conditions. The model (\ref{tNLS}) plays an important role in nonlinear optics and Bose-Einstein condensates (see \cite{Agr2013, BC2013, FM2001,F2010,Malomed2008}).
 It is evident that  $\psi(t,x)$ is a solution to \eqref{tNLS} if and only $(u, \lambda)$ constitutes a coupled solution to \eqref{schrodinger} with $f(u)=g(|u|)u$.
We study equation \eqref{schrodinger} by searching for solutions satisfying the mass constraint $\int_{\Omega}|u|^2 dx=c$.
 In this context, $\lambda$ remains unknown and emerges as a Lagrange multiplier, a feature deemed meaningful from a physical standpoint due to mass conservation.
The solutions under a $L^2$ constraint are commonly referred to as normalized solutions.
 These normalized solutions to \eqref{schrodinger} can be obtained as critical points of the energy functional $J: H^1(\Omega)\to \mathbb{R}$ defined by:
\[J(u):=\frac{1}{2}\int_{\Omega}|\nabla u|^2 dx-\int_{\Omega} F(u) dx\]
on the $L^2$-sphere constraint:
\[\mathcal{S}_c:=\left \{u\in H^1(\Omega):~\int_{\Omega}|u|^2 dx=c \right \},\]
where $F(u)= \int_0^u f(t)dt$.

When $\Omega= \mathbb{R}^N$, problem \eqref{schrodinger}-(\ref{1.2}) is transformed into
\begin{equation}\label{gu}
\begin{cases}
	&-\Delta u+ \lambda u= f(u)  ~~\text{in}~ \mathbb{R}^N,\\
    &\displaystyle\int_{\mathbb{R}^N}|u|^2 dx=c.\\
\end{cases}
\end{equation}
Extensive studies have been conducted in recent years for this problem, particularly when considering nonlinearities satisfying $f(u)\sim|u|^{p-2}u$ as $|u|\to+\infty$, where $p\in (2_*, 2^*)$ with $2_*:=2+\frac{4}{N}$ and $2^*:=\frac{2N}{N-2}$. Within this $L^2$-supercritical range, the corresponding energy functional is unbounded from below on the constraint set $\mathcal{S}_c$. The pioneering work in this direction was carried out by Jeanjean \cite{Jeanjean1997}, where a scaled functional and mountain pass arguments were introduced to address the $L^2$-supercritical problems. Bartsch and Soave \cite{Bartsch2017} developed a natural constraint approach to investigate the $L^2$-supercritical NLS equations and systems on $\mathbb{R}^N$.
For more related results on $\mathbb{R}^N$, we refer to \cite{AJM2022,BFG2023,IkNo,JJLV2022,JL2022,WW-2022} and the associated references.

The exploration of normalized solutions for the NLS on bounded domains was initialed in \cite{NTV-2014}. When $f(u)=|u|^{p-2}u$ with $p$ being $L^2$-supercritical but Sobolev subcritical, Noris et al. \cite{NTV-2014} demonstrated the existence of a positive normalized solution on a unit ball with Dirichlet boundary conditions. The case for general bounded domains was addressed in \cite{PV-2017}. For nonlinear Schr\"odinger systems, one can refer to \cite{NTV-2019}.

 In \cite{PPVV-2021},
Pellacci et al. investigated normalized solutions of the NLS under both Dirichlet and Neumann boundary conditions, focusing on the concentration of solutions at specific points of $\Omega$ as the prescribed mass $c$ varies. Specifically, for the Neumann problem \eqref{schrodinger}--\eqref{1.2}  with $f(u)=u^{p-1}$ for $p\in (2_*, 2^*)$, they employed the Lyapunov-Schmidt reduction method to prove the existence of positive normalized solutions when $c\in (0,c_0)$ for some $c_0>0$. The solutions concentrate at a point $\xi_0\in \overline\Omega$ as $c\to0$, where $\xi_0$ is either a non-degenerate critical point of the mean curvature $H$ of the boundary $\partial\Omega$ or the maximum point of the distance function from $\partial \Omega$. However, the nature of these solutions—whether they correspond to local minimizers or mountain pass-type critical points of the associated functional—remains unclear.

Additionally, it is noteworthy that earlier works on $L^2$-subcritical Schr\"{o}dinger-Poisson type systems under Neumann boundary conditions can be found in \cite{Afonso, PS-2007, PS-2014}, where the authors established the existence of infinitely many normalized solutions using the Ljusternik–Schnirelmann theory.

The methodologies used for bounded domains differ significantly from those utilized in the entire space. In fact, the approaches for $\mathbb{R}^N$ heavily rely on the scaling transform $(t\star u)(x)=t^{\frac{N}{2}}u(tx)$ and the associated  Pohozaev identity. However, due to the lack of invariance under translations and dilation, as well as the emergence of uncontrollable boundary terms in the Pohozaev identity (particularly for nonconvex domains), neither of these techniques is applicable to general bounded domains.

The normalized solutions are also explored within the framework of ergodic Mean Field Games (MFG) systems, offering another key motivation for studying equation \eqref{schrodinger}. Mean-Field Games were introduced in influential works by Huang, Caines, and Malham`e \cite{HMC-2006} and Lasry and Lions \cite{LL-2007}, independently. The primary objective of MFG is to establish a framework for characterizing Nash equilibria in differential games involving an infinite number of agents that are indistinguishable from one another.
For more details, we refer interested readers to \cite{C-2015,CCV-2023, San-2020} and the references therein.
From a mathematical perspective, such equilibria can be characterized by an elliptic system that combines a Kolmogorov equation and a Hamilton-Jacobi-Bellman equation. Moreover, this system has to satisfy normalization in $L^1(\Omega)$ as follows:
\begin{equation}\label{meanfield}
\begin{cases}
	-\Delta v+ H(\nabla v)+ \lambda = h(m(x))  & {\rm in} \,~\Omega ,\\
-\Delta m- div(m \nabla H(\nabla v))=0 & {\rm in} \,~\Omega ,\\
\displaystyle\frac{\partial v}{\partial \nu}=0,~~ \frac{\partial m}{\partial \nu}+ m\nabla H(\nabla v)\cdot \nu=0  & {\rm on} \,~ \partial \Omega , \\
\displaystyle\int_\Omega m dx=1, ~~\int_\Omega v dx=0.
\end{cases}
\end{equation}
The Neumann boundary conditions are based on the assumption that agents' trajectories are restricted to $\Omega$ by bouncing off the boundary in a normal direction.
For the quadratic Hamilton case $H(\nabla v)=|\nabla v|^2$, by using a Hopf-Cole transformation $\phi= e^{-v}/ \int e^{-v}= \sqrt{m}$,
\eqref{meanfield} is reduced to
\begin{equation}\label{singlemeanfield}
\begin{cases}
-\Delta \phi =\lambda \phi- h(\phi^2)\phi  & {\rm in} \,~\Omega,\\
\displaystyle\frac{\partial \phi}{\partial \nu}=0   \, &{\rm on}\,~\partial \Omega,\\
\displaystyle\int_\Omega |\phi|^2 dx=1,
\end{cases}
\end{equation}
which can be viewed as a single nonlinear Schr\"odinger equation with prescribed mass.
In fact, we can check that \eqref{singlemeanfield} is equivalent to \eqref{schrodinger} by taking a simple transformation $\phi= \frac{1}{\sqrt{c}}u$ and $f(u)=h(\frac{u^2}{c})u$.
In \cite{CCV-2023}, Cirant et al. investigated the existence of the viscous ergodic MFG system with Neumann boundary conditions.
 They proved the existence of global minimizers in $L^2$-subcritical and critical cases, as well as local minimizers in $L^2$-supercritical cases.

When $f$ satisfies the $L^2$-supercritical growth, the constraint functional $J|_{\mathcal{S}_c}$  exhibits a mountain pass geometry structure. Hence, it is natural to seek mountain pass type normalized solutions to the Neumann problem (\ref{schrodinger})-(\ref{1.2}). However, in this scenario, the functional $J$ becomes unbounded from below on $\mathcal{S}_c$,  rendering the method in \cite{CCV-2023} ineffective. To the best of our knowledge, there are currently no references addressing this issue. The primary aim of this paper is to develop a novel variational technique to explore the existence of mountain pass type normalized solutions for (\ref{schrodinger})-(\ref{1.2}) under the $L^2$-supercritical growth conditions.

Furthermore, notice that the current existence results of normalized solutions for \eqref{gu} mainly rely on the following global Ambrosetti-Rabinowitz(AR) type condition
\begin{eqnarray}\label{AR}
0<\alpha F(t)\leq f(t)t ~{\rm for}~ t\not=0, ~{\rm where}~ \alpha\in \left(2_*, 2^*\right),
\end{eqnarray}
(see \cite{Bartsch2017,IkNo,Jeanjean1997}), or combined nonlinearities (see  \cite{AJM2022,BFG2023,JJLV2022,JL2022,Soave-2020JDE,Soave-2020JFA,WW-2022}).
Recently, normalized solutions of \eqref{gu} have also been studied when (\ref{AR}) is replaced by certain global monotonicity conditions, as discussed in \cite{BM2020,CLY2023,JL-2020,MS2022}. However, the exploration of normalized solutions on bounded domains with more diverse nonlinearities beyond power nonlinearity is currently limited. Our method is designed to be flexible and applies to a broad range of nonlinearities, including $f(u)=|u|^{p-2}u$, $|u|^{q-2}u+ |u|^{p-2}u$, and $-|u|^{q-2}u+|u|^{p-2}u$, where $2<q<2+\frac{4}{N}<p< 2^*$, in a unified manner. Our results do not require the presence of condition (\ref{AR}) or the global monotonicity conditions. Additionally, motivated by the arguments in \cite{PV-2017}, diverging from the prevalent focus on positive solutions in existing literature, we establish the existence of mountain pass type normalized solutions that are not necessarily positive.

%Notice that the known existence results of normalized solutions for \eqref{schrodinger} are mainly based on $f(u)=|u|^{p-2}u$.
%or \eqref{gu}
%or $f$ is monotone and satisfies
%\[0<\alpha F(t)\leq f(t)t ~{\rm for}~ t\not=0, ~{\rm where}~ \alpha\in \left(2+\frac{4}{N}, \frac{2N}{N-2}\right) \]
%which is known as the Ambrosetti-Rabinowitz condition.
%Nevertheless, there is currently limited understanding of the solvability of equations \eqref{schrodinger} when considering more general nonlinearities.
%Furthermore, can we obtain such solutions for a more general nonlinearity $f$?
%These are the question that motivates the present paper.
Before stating our results, let us impose the following assumptions on $f$:
\begin{itemize}
	\item[($f_1$)]  $f\in C^1({\mathbb{R}})$, $\lim\limits_{|t|\to 0} \frac{f(t)}{t}=0$;
	 \item[($f_2$)] there exist constants $p\in (2_*, 2^*)$ and $a_0>0$ such that
	\[\lim_{|t|\to \infty} \frac{f(t)}{|t|^{p-2}t} =a_0;\]
	\item[($f_3$)] there exist constants $\mu\geq a_0(p-1)$ and $M>0$ such that
	 \[\mu |t|^{p-2} \leq f'(t), ~\forall |t|\geq M. \]
\end{itemize}
%Clearly, the nonlinearities $f(t)= |t|^{p-2}t$, $f(t)=|t|^{q-2}t + |t|^{p-2}t$, $f(t)=-|t|^{q-2}t + |t|^{p-2}t$, where $2<q< p< 2^*$, all satisfy $(f_1)-(f_4)$:

Our main result is the following theorem.
\begin{theorem}\label{mainresult}
	Suppose $(f_1)-(f_3)$ hold. Then there exists a constant $c^*>0$ such that for any $0<c<c^*$, problem (\ref{schrodinger})-(\ref{1.2}) has a normalized solution pair $(u,\lambda)\in H^1(\Omega)\times \mathbb{R}$ of mountain pass type.
\end{theorem}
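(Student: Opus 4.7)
The overall strategy is to produce the mountain pass critical point of the constrained functional $J|_{\mathcal{S}_c}$ by a minimax scheme that yields a Palais--Smale sequence carrying information on its Morse index, then to convert this Morse index control into a uniform $L^\infty$ bound through a blow-up analysis adapted to the Neumann boundary condition. First I would set up the mountain pass geometry around the constant base point $u_c \equiv \sqrt{c/|\Omega|} \in \mathcal{S}_c$, for which $J(u_c) = -|\Omega|F(\sqrt{c/|\Omega|}) \to 0$ as $c \to 0^+$. Using $(f_1)$--$(f_2)$, the Gagliardo--Nirenberg inequality on $H^1(\Omega)$ and the fact that $p > 2_*$, one finds $c^* > 0$ and $R,\alpha > 0$ such that $J \geq J(u_c) + \alpha$ on the set $\{u \in \mathcal{S}_c : \|\nabla u\|_{L^2}^2 = R\}$ whenever $0 < c < c^*$; on the other hand, concentrating the mass $c$ on a small ball inside $\Omega$ produces elements of $\mathcal{S}_c$ with arbitrarily negative $J$, giving a point beyond the barrier.

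Next, I would apply a minimax principle of Fang--Ghoussoub / Bartsch--Soave type on the $C^2$ submanifold $\mathcal{S}_c$ to this mountain pass class. This delivers a Palais--Smale sequence $(u_n) \subset \mathcal{S}_c$ at the mountain pass level $c_{MP}$ whose constrained Morse index is bounded by one, together with Lagrange multipliers $\lambda_n$. Proving boundedness of $(u_n)$ in $H^1(\Omega)$ is the first nontrivial step: since there is neither an Ambrosetti--Rabinowitz nor a monotonicity hypothesis, and since a Pohozaev identity for Neumann data on a general domain produces uncontrollable boundary terms, I would exploit $(f_3)$ together with the Morse index bound by testing the second-variation inequality against $u_n$ truncated far from zero. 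This yields a quantitative lower bound on $\int_\Omega f'(u_n) u_n^2\,dx$, which combined with the energy identity at level $c_{MP} + o(1)$ closes the $H^1$ estimate and hence bounds $\lambda_n$.

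The heart of the argument is the blow-up analysis promoting the $H^1$ bound to an $L^\infty$ bound. Assuming by contradiction that $M_n := \|u_n\|_{L^\infty} \to \infty$, I would pick $x_n \in \overline{\Omega}$ attaining this maximum, set $\eps_n := M_n^{-(p-2)/2}$, and rescale $v_n(y) := u_n(x_n + \eps_n y)/M_n$. By $(f_2)$, $v_n$ satisfies an equation that converges to $-\Delta v = a_0 |v|^{p-2}v$; depending on whether $\dist(x_n,\pa\Omega)/\eps_n \to \infty$ or stays bounded, the limit is posed either on $\mathbb{R}^N$ or on a half-space with homogeneous Neumann condition, the latter extending by even reflection to an entire solution on $\mathbb{R}^N$. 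The Morse index bound passes to the rescaled problem and survives the reflection, producing a nontrivial bounded finite-Morse-index solution of $-\Delta v = a_0 |v|^{p-2}v$ on $\mathbb{R}^N$ with $p - 1 < (N+2)/(N-2)$; this contradicts Farina's Liouville-type classification of stable-at-infinity solutions in the subcritical Sobolev range. Once $\|u_n\|_{L^\infty}$ is bounded, standard elliptic regularity together with compact embedding yield strong $H^1$-convergence of a subsequence to some $u \in \mathcal{S}_c$, and $\lambda_n \to \lambda$, producing the desired normalized solution pair. The main obstacle I expect is precisely this boundary blow-up: ensuring that the perturbation terms carried by $(f_1)$--$(f_2)$ vanish in the limit in the right norm, that the Neumann condition survives rescaling when $x_n$ approaches $\pa\Omega$ at the scale $\eps_n$, and that the constrained Morse index bound transfers intact (through reflection when necessary) to the entire limit profile, since a loss of even one unit of Morse index could compromise the contradiction with Farina.
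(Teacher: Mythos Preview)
Your proposal has a genuine gap at Step~3, the $H^1$ boundedness of the Palais--Smale sequence. The Fang--Ghoussoub type minimax on $\mathcal{S}_c$ only furnishes an \emph{approximate} Morse index bound $\tilde m_{\zeta_n}(u_n)\le 1$ with $\zeta_n\to 0$, and this information is useful only once you already know the sequence is bounded (so that you can pass to a limit). Your suggestion to ``test the second-variation inequality against $u_n$ truncated far from zero'' does not produce a closed estimate: the inequality $D^2J(u_n)[\phi,\phi]\ge -\zeta_n\|\phi\|^2$ holds only for \emph{some} direction in each $2$-dimensional subspace of $T_{u_n}\mathcal{S}_c$, not for a chosen test function, and no combination of this with the energy identity bounds $\|\nabla u_n\|_2$ without an AR-type or monotonicity hypothesis on $f$---precisely the structure the paper sets out to avoid.

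The paper closes this gap by an entirely different mechanism. It first decomposes $f=f_1+f_2$ via a cutoff so that $F_1\ge 0$, then introduces the parameterized family $J_\rho=A-\rho B$ with $B(u)=\int_\Omega F_1(u)\ge 0$; this sign condition is what makes the monotonicity trick applicable, and the abstract theorem of \cite{BCJS-202210} then yields, for a.e.\ $\rho\in[\tfrac12,1]$, a \emph{bounded} PS sequence with approximate Morse index $\le 1$, hence an actual critical point $u_\rho$ with $m(u_\rho)\le 2$. The blow-up analysis is then performed on the sequence of \emph{solutions} $u_{\rho_n}$ as $\rho_n\to 1^-$, not on a PS sequence. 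Moreover, the hard part of that analysis is not the step you describe (promoting $H^1$ to $L^\infty$ via Farina once $\lambda_n$ is bounded) but rather excluding $\lambda_n\to+\infty$: in that regime the rescaled limit solves $-\Delta v+v=|v|^{p-2}v$, which \emph{does} admit nontrivial finite-Morse-index solutions, so Farina gives no contradiction. The paper instead counts blow-up points via the Morse bound (Lemma~4.4), proves a uniform exponential decay away from them by a comparison argument that requires extending the equation across $\partial\Omega$ via a tubular-neighborhood reflection (Lemma~4.5), and reaches a contradiction with the fixed mass $\int_\Omega u_n^2=c$ since $p>2_*$ forces $\lambda_n^{N/2-2/(p-2)}c\to+\infty$ while the decay keeps the rescaled mass bounded. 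None of this machinery appears in your outline.
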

\begin{remark}
If $f(t)t> 0$ for $t \neq 0$, it allows us to establish the positivity of the Lagrange multiplier $\lambda$. In fact, under this condition, we can obtain a positive normalized solution $u$ of (\ref{schrodinger}). Subsequently, integrating equation (\ref{schrodinger}) and using the Neumann boundary value condition, we can deduce that $\lambda>0$.
\end{remark}

A significant challenge in proving Theorem \ref{mainresult} arises from establishing the boundedness of Palais-Smale sequences. %since $f$ does not satisfy the (AR) type condition (\ref{AR}) and the global monotonicity type conditions.%Note that, Specially, without  the sign condition: $f(s)s>0~~{\rm for}~~s\not =0$.
%we cannot determine the sign of the Lagrange multiplier.
To overcome this obstacle, we combine with a parameterized minimax principle with Morse index information for constrained functionals established recently in \cite{BCJS-202210} (see also \cite{BCJS-202212, CJS-2022}) and a new blow-up analysis for the NLS equation under Neumann boundary conditions.
In applying this method, the sign condition $\int_\Omega F(u) \geq 0$ for all $u \in H^1(\Omega)$ is crucial to ensure the applicability of the monotonicity trick (see Theorem \ref{abstract theorem} in Section 3).
However, in our specific problem, $\int_\Omega F(u)$ is not necessarily non-negative when $u$ is not sufficiently large.
Our strategy to tackle this challenge is to utilize a cut-off function and decompose $f$ into two parts, ensuring that one of the parts satisfies this sign condition and allows for the application of the monotonicity trick.

To address the original Neumann problem after deriving solutions for approximation problems, we will propose a innovative blow-up analysis tailored for the NLS subject to Neumann boundary conditions.  In contrast to the blow-up arguments applied in the Dirichlet case (see \cite{EspPet,PV-2017}), the blow-up analysis for NLS with Neumann boundary conditions, subject to the $L^2$-constraint, proves to be considerably more intricate.
In fact, in Neumann problems, the solution is not necessarily required to vanish at the boundary $\partial\Omega$, and local extremum points of solutions can exist on the boundary. This is in stark contrast to Dirichlet problems where such extremum points are found in the interior of $\Omega$.

We note that several studies have conducted blow-up analyses of the NLS under Neumann boundary conditions. 
It is well known that combining blow-up arguments with suitable Liouville-type theorems is highly effective for deriving a priori estimates.
In \cite{LNT-1988},  Lin et al. delivered a comprehensive blow-up analysis to establish the boundedness for positive solutions to the following Neumann problem:
\begin{equation}\label{bingzhan}
\begin{cases}
-d\Delta u +u= g(u)  & {\rm in} \,~\Omega,\\
\displaystyle\frac{\partial u}{\partial \nu}=0   \, &{\rm on}\,~\partial \Omega,\\
\end{cases}
\end{equation}
where $d>0$ and $g$ satisfies Sobolev subcritical growth conditions.
The Sobolev critical case was subsequently addressed by Adimurthi et al. \cite{APY-1993}.
In \cite{HWZ-2016}, Hu et al. used a blow-up argument for a nonlinear Neumann problem involving the $p$-Laplacian to obtain a priori estimates.
For further details on blow-up analyses of elliptic problems under Neumann boundary conditions, the reader is referred to \cite{BGT-2016, LS-2025}.

Blow-up analysis can also establish a connection between the boundedness of solutions and their Morse indices. 
Consider the following problem:
\begin{equation}\label{HARS}
\begin{cases}
-\Delta u = g(x, u)  & {\rm in} \,~\Omega,\\
\displaystyle\frac{\partial u}{\partial \nu}=0   \, &{\rm on}\,~\partial \Omega,\\
\end{cases}
\end{equation}
where $g$ satisfies superlinear and subcritical growth conditions.
Harrabi et al. \cite{HARS-2012} proved that the $L^\infty$ bounds for solutions to \eqref{HARS} are equivalent to bounds on their Morse indices.

In contrast to the arguments presented in \cite{HARS-2012}, our discussion is conducted under a mass constraint and involves a family of nonlinearities arising from varying Lagrange multipliers, along with a dense set used in the monotonicity trick. The most significant distinction, however, is that our blow-up analysis requires an exponential decay estimate for the solution sequence. This difference necessitates substantial modifications to the blow-up arguments in the Neumann case.

  As we discussed earlier, we present a compelling argument to establish that the solution sequence not only exhibits the commonly observed exponential decay away from the boundary $\partial\Omega$, as found in the fixed frequency NLS in the Neumann case (see \cite{NT-1991, NT-1993}), but more significantly, it demonstrates uniform exponential decay away from the blow-up points.

 To conduct the blow-up analysis and subsequently derive a contradiction, a critical step involves demonstrating that the solution sequence exhibits uniform exponential decay, especially when the blow-up points are situated on the boundary $\partial\Omega$. However, in such cases, we can typically only establish that the solution sequence exponentially decays uniformly away from the blow-up points within a smaller domain in $\Omega$, which is distant from the boundary.
Specifically, as detailed in Section 4, through a comparison argument, we can only provide an estimation that the solution sequence exponentially decays uniformly away from the blow-up points in the domain $\tilde{\Omega}_\theta \backslash \cup_{i=1}^k (B_{R\lambda_n^{-\frac{1}{2}}}(P_n^i)\cap \tilde{\Omega}_\theta)$ ($\Omega_\theta$ is defined in Section 4, $\tilde{\Omega}_\theta:= \overline{\Omega}\backslash \Omega_\theta $ and $k\in\{1,2\}$).
To be precise, it is demonstrated that there exist constants $C_1>0$ and $C_2>0$ such that
	\begin{equation*}
		u_n(x)\leq C_1e^{C_1 R}\lambda_n^{\frac{1}{p-2}}\sum_{i=1}^k e^{-C_2\lambda_n^{\frac{1}{2}}|x- P_n^i|},~~\forall x\in \tilde{\Omega}_\theta \backslash \cup_{i=1}^k (B_{R\lambda_n^{-\frac{1}{2}}}(P_n^i)\cap \tilde{\Omega}_\theta),
	\end{equation*}
where $u_n$  satisfies \eqref{withn}, $R>0$ is a constant, $P_n^i$ is a local extremum point, $i\in \{1,\cdots, k\}$.
However, this conclusion falls short of addressing our problem adequately.
 The limitation arises from the challenging nature of estimating the exponential decay of the solution in the neighborhood near the boundary $\partial\Omega$.

 To overcome this limitation, we formulate a diffeomorphism, denoted as $\Phi:\Omega^\theta\to \Omega_\theta$, in a manner that expands the equation to encompass a new domain, namely, $\overline{\Omega}\cup \Omega^\theta$, which combines the original domain $\Omega$ with an adjoining tubular neighborhood $\Omega^\theta$ (see Section 4). Employing the comparison theorem, we deduce that the solution exhibits exponential decay away from the boundary of the newly defined domain $\overline{\Omega}\cup \Omega^\theta$. Subsequently, through a similar argument as mentioned earlier, we observe that the solution exponentially decays uniformly away from the blow-up points in $\overline{\Omega}\backslash \cup_{i=1}^k (B_{R\lambda_n^{-1/2}}(P_n^i)\cap \overline{\Omega})$. This establishes the intended conclusion we strive to attain.

%The core of our methodology revolves around pinpointing an appropriate diffeomorphism, denoted as $\Phi$. An equally crucial aspect involves the intricate task of estimating the comparison function within the tubular neighborhood, denoted as $\Omega^\theta$. This estimation is pivotal to ensure that the comparison function surpasses the solution within this specific neighborhood. However, obtaining such an estimate proves to be a demanding endeavor. We provide a detailed demonstration of the estimation process within this context.

%In addition, while solutions to the NLS with fixed frequency in the Neumann case typically decay exponentially away from the boundary $\partial\Omega$ (see \cite{NT-1991, NT-1993}), this decay property proves insufficient for our specific problem. In our context, we present an argument to establish that the solution sequence exponentially decays uniformly away from the blow-up points.

%In our specific scenario, we present a compelling argument to establish that the solution sequence not only follows the commonly observed exponential decay away from the boundary $\partial\Omega$, as found in the fixed frequency NLS in the Neumann case (see \cite{NT-1991, NT-1993}), but more significantly, it demonstrates uniform exponential decay away from the blow-up points.

This paper is organized as follows. In Section 2, we provide the mountain pass geometry of the parameterized functionals $J_\rho$ uniformly for $\rho\in[\frac{1}{2},1]$. Subsequently, in Section 3, we show the existence of a mountain pass critical point $u_\rho$ with Morse index information for $J_\rho|_{\mathcal{S}_c}$ for almost every $\rho\in[\frac{1}{2},1]$. In Section 4, we perform a blow-up analysis of solutions with bounded Morse index for the Neumann problem. Finally, in Section 5, we prove Theorem \ref{mainresult}. For convenience, we denote by $\|\cdot\|_r$ the norm of the spaces $L^r(\Omega)(1\le r\le +\infty)$, by $\|\cdot\|$ the norm in $H^1(\Omega)$.

\section{Mountain pass geometry}

This section is dedicated to establishing the uniform mountain pass geometry for a parameterized functional. To achieve this goal, we first decompose $f$ into two parts.
By $(f_1)-(f_2)$, we deduce that there exists $R_0>0$ such that $f(t)t>0$ for $|t|\geq R_0$.
Utilizing this observation, we define
\begin{equation*}
	f_1(t):=\eta (t)f(t)
	~~{\rm and}~~
	f_2(t):=(1-\eta (t))f(t),
\end{equation*}
where $\eta$ is a smooth cut-off function such that
\begin{equation*}
	\eta (t)=
	\begin{cases}
		1, &|t|\geq R_0+1,\\
		0, &|t|< R_0
	\end{cases}
\end{equation*}
with $|\eta'(t)|\leq 2$ for $R_0< |t|< R_0+1$. Clearly, $f_1(t)t\geq 0$ for all $t\not =0$.

Define $J_{\rho}: H^1(\Omega)\to \mathbb{R}$ by
\begin{equation}\label{xiaolu}
	J_\rho(u):=\frac{1}{2}\int_{\Omega}|\nabla u|^2 dx- \int_\Omega F_2(u)dx-\rho\int_{\Omega}F_1(u) dx,~  u\in H^1(\Omega),~ \rho \in \left[\frac{1}{2},1\right],
\end{equation}
where $F_1(u)= \int_0^u f_1(t)dt$ and $F_2(u)= \int_0^u f_2(t)dt$.

%For simplicity, in what follows, we denote $f_\rho(u)=\rho f_1(u)+ f_2(u)$.

We recall the Gagliardo-Nirenberg inequality (see \cite{N-1966}):
for every $N\geq 3$ and $p\in(2,2^*)$, there exists a constant $C_{N,r,\Omega}$ depending on $N$, $r$ and $\Omega$ such that
\[\|u\|_r\leq C_{N,r,\Omega}\|u\|_2^{1-\gamma_r}\|u\|_{H^1(\Omega)}^{\gamma_r},~~\forall u\in H^1(\Omega),\]
where \[\gamma_r:=\frac{N(r-2)}{2r}.\]

%Then we consider the constrained functional $E_{\rho}|_{S(a)}$.
Define $\mathcal{B}_\alpha:=\{u\in \mathcal{S}_c: \int_{\Omega} |\nabla u|^2\leq \alpha\}$, $\forall \alpha>0$.
We have the following result.
\begin{lemma}\label{local}
	Assume $(f_1)-(f_3)$. Then there exists $c^*>0$ such that for any $c\in (0, c^*)$, we can find $\alpha^*>0$ such that
	\[\sup_{u\in \mathcal{B}_{\alpha^*}} J_{\frac{1}{2}}(u)< \inf_{u\in \partial\mathcal{B}_{2\alpha^*}} J_1(u).\]
	%with
	%\[\mathcal{B}_k:=\{u\in \mathcal{S}_c: \int_{\Omega} |\nabla u|^2\leq k\}~~{\rm and}~~ \partial\mathcal{B}_{2k}=\{u\in \mathcal{S}_c: \int_{\Omega} |\nabla u|^2= 2k\}.\]
\end{lemma}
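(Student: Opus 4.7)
The plan is to estimate $J_{1/2}$ from above on $\mathcal{B}_{\alpha^*}$ and $J_1$ from below on $\partial\mathcal{B}_{2\alpha^*}$, then exploit the favourable sign of $\int_\Omega F_1$ together with the smallness produced by the Gagliardo--Nirenberg inequality when $c$ is small. From $(f_1)$ and $(f_2)$ one first extracts the standard pointwise bound
\[
|F(t)|\le \eps\, t^2+C_\eps |t|^p\qquad\text{for all }t\in\R,\ \eps>0,
\]
and, by inspecting the cut-off $\eta$, the analogous bounds $|F_1(t)|\le C|t|^p$ (because $F_1$ vanishes on $[-R_0,R_0]$ and inherits the $p$-growth at infinity) and $|F_2(t)|\le \eps t^2+C_\eps |t|^p$ (because $F_2$ is globally bounded on $\R$ and satisfies $F_2(t)=o(t^2)$ as $t\to 0$). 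Crucially, $F_1(t)\ge 0$ since $f_1(t)t\ge 0$, which will allow me to simply discard the term $-\tfrac12\int_\Omega F_1$ in $J_{1/2}$.

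For $u\in\mathcal{S}_c$ the Gagliardo--Nirenberg inequality yields
\[
\|u\|_p^p\le C\, c^{(1-\gamma_p)p/2}\bigl(c+\|\nabla u\|_2^2\bigr)^{\gamma_p p/2},
\]
with $\gamma_p=N(p-2)/(2p)\in(0,1)$ because $p<2^*$; in particular $(1-\gamma_p)p/2>0$. For $u\in\mathcal{B}_{\alpha^*}$, applying $F_1\ge 0$ together with the bound on $|F_2|$ gives
\[
J_{1/2}(u)\le \tfrac12\alpha^*+\eps c+C'_\eps\, c^{(1-\gamma_p)p/2}(c+\alpha^*)^{\gamma_p p/2},
\]
while for $u\in\partial\mathcal{B}_{2\alpha^*}$, using $J_1(u)=\alpha^*-\int_\Omega F(u)$,
\[
J_1(u)\ge \alpha^*-\eps c-C'_\eps\, c^{(1-\gamma_p)p/2}(c+2\alpha^*)^{\gamma_p p/2}.
\]

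Subtracting these two estimates gives
\[
\inf_{\partial\mathcal{B}_{2\alpha^*}}J_1-\sup_{\mathcal{B}_{\alpha^*}}J_{1/2}\ \ge\ \tfrac12\alpha^*-2\eps c-C'_\eps\, c^{(1-\gamma_p)p/2}\bigl[(c+\alpha^*)^{\gamma_p p/2}+(c+2\alpha^*)^{\gamma_p p/2}\bigr].
\]
Fixing any $\alpha^*>0$ (for instance $\alpha^*=1$) and any $\eps\in(0,1/4)$, and using that $(1-\gamma_p)p/2>0$, the right-hand side converges to $\tfrac12\alpha^*>0$ as $c\to 0^+$. Consequently there exists $c^*>0$ such that for every $c\in(0,c^*)$ the right-hand side is strictly positive, which is the desired separation; if one prefers $\alpha^*$ to vary with $c$, the same argument works equally well with $\alpha^*=c^\beta$ for any $\beta\in(0,1)$.

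The only mild obstacle is to make sure the behaviour of $f$ near the origin does not spoil the estimate; this is handled precisely by the bound $|F_2(t)|\le \eps t^2+C_\eps|t|^p$, which absorbs the bounded portion of $f$ into an $L^p$ term and a quantitatively small $L^2$ term. Note that assumption $(f_3)$ plays no role here; it will be needed only later for the Pohozaev-type identity and the Morse-index analysis.
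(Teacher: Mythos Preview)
Your proof is correct and follows essentially the same route as the paper: both arguments drop the nonnegative term $\tfrac12\int_\Omega F_1$, control $\int_\Omega F$ and $\int_\Omega F_2$ by an $\eps$-quadratic plus subcritical-power bound, and invoke Gagliardo--Nirenberg so that all error terms carry a positive power of $c$. The only cosmetic difference is that the paper chooses $\alpha^*=4c$ (so $\alpha^*$ scales with the mass) and introduces an auxiliary exponent $q\in(2,2_*)$ for the $F_2$-estimate, whereas you keep $\alpha^*$ fixed and use the same exponent $p$ throughout; both choices work, and your observation that $(f_3)$ is not used here is also in line with the paper.
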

\begin{proof}
By $(f_1)-(f_2)$, for any $\epsilon$, $\delta>0$ and $q\in (2, 2_*)$, there exist constants $C'_\epsilon, C_\delta>0$ such that
\begin{eqnarray} 
&&F_2(u)\geq -\frac{\epsilon}{2}|u|^2 -\frac{C'_\epsilon}{q}|u|^q, \label{5-10-1}\\
&&F(u)\leq \frac{\delta}{2}|u|^2 + \frac{C_\delta}{p}|u|^p.\label{5-10-2}
\end{eqnarray}
Let $u\in \mathcal{B}_\alpha$ and $v\in \partial\mathcal{B}_{2\alpha}$, where $\alpha$ is to be determined. By (\ref{5-10-1})-(\ref{5-10-2}), along with the Gagliardo-Nirenberg inequality, we obtain
	\begin{equation*}\label{llll}
		\begin{split}
		& J_1(v)-J_{\frac{1}{2}}(u)\\
		%&= \frac{1}{2}\int_\Omega |\nabla v|^2 -
		%\int_\Omega F_2(v)- \int_\Omega F_1(v)-\frac{1}{2}\int_\Omega |\nabla u|^2 + \int_\Omega F_2(u)+ \frac{1}{2}\int_\Omega F_1(u)\\
		&\geq \frac{1}{2}\int_\Omega |\nabla v|^2-\frac{1}{2}\int_\Omega |\nabla u|^2 -\int_\Omega F(v)+ \int_\Omega F_2(u)\\
		%&\geq \frac{1}{2}\int_\Omega |\nabla v|^2-\frac{1}{2}\int_\Omega |\nabla u|^2-\frac{\delta}{2}\int_\Omega |v|^2-\frac{C_\delta}{p}\int_\Omega |v|^p - \frac{\epsilon}{2}\int_\Omega |u|^2-\frac{C'_\epsilon}{q}\int_\Omega |u|^q\\
		&\geq \frac{1}{2}\int_\Omega \left( |\nabla v|^2- |\nabla u|^2 \right)dx -\frac{\delta+\epsilon}{2}c-\frac{C_\delta C_{N,p}^p}{p}c^{\frac{p(1-\gamma_p)}{2}}\|v\|^{p\gamma_p}- \frac{C'_\epsilon C_{N,q}^q}{q}c^{\frac{q(1-\gamma_q)}{2}}\|u\|^{q\gamma_q}\\
&\geq \frac{\alpha}{2}-\frac{\delta+\epsilon}{2}c-\frac{C_\delta C_{N,p}^p}{p}c^{\frac{p(1-\gamma_p)}{2}}(2\alpha+c)^{\frac{p\gamma_p}{2}}- \frac{C'_\epsilon C_{N,q}^q}{q}c^{\frac{q(1-\gamma_q)}{2}}(\alpha+c)^{\frac{q\gamma_q}{2}}.
\end{split}
\end{equation*}
Due to the arbitrariness of $\epsilon$ and $\delta$, we conclude that for some $c^*>0$ and any $c\in (0, c^*)$, taking $\alpha^*= 4c$, we have
\begin{equation*}
J_1(v)-J_{\frac{1}{2}}(u)\geq c-9^{\frac{p\gamma_p}{2}}\frac{C_1 C_{N,p}^p}{p}c^{\frac{p}{2}}- 5^{\frac{q\gamma_q}{2}}\frac{C_1' C_{N,q}^q}{q}c^{\frac{q}{2}}>0.
\end{equation*}
Hence, the conclusion follows.
\end{proof}

 For any $c\in (0,c^*)$, we define
\[m_\rho:= \inf_{u\in \mathcal{B}_{2\alpha^*}}J_\rho(u), ~\forall\rho\in \left[\frac{1}{2},1 \right].\]
Then, following the approach outlined in \cite[Proposition 3.4]{NTV-2019} (see also \cite[Theorem 1.6]{BJ-2016}), we can derive the following result.
\begin{lemma}\label{exist}
Assume $(f_1)-(f_3)$ and $c\in (0, c^*)$. Then $m_\rho$ is achieved by some $u^*_{\rho}\in \mathcal{B}_{2\alpha^*}\setminus \partial\mathcal{B}_{2\alpha^*}$.
\end{lemma}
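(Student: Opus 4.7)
The plan is to apply the direct method of the calculus of variations on $\mathcal{B}_{2\alpha^*}$, using the strict energy gap from Lemma~\ref{local} to keep minimizing sequences away from $\partial \mathcal{B}_{2\alpha^*}$.

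First, I would exploit the cut-off construction. Since $f(t)t>0$ for $|t|\geq R_0$ and $\eta\geq 0$, we have $f_1(t)t\geq 0$ for every $t\in\mathbb{R}$; integrating separately on $(0,t)$ for $t>0$ and on $(t,0)$ for $t<0$ yields $F_1(t)\geq 0$ for all $t$. Consequently $\rho\mapsto J_\rho(u)$ is nonincreasing on $[\tfrac12,1]$, giving the pointwise ordering
\[
J_1(u)\leq J_\rho(u)\leq J_{1/2}(u),\qquad \forall u\in H^1(\Omega),\ \rho\in[\tfrac12,1].
\]
Combining this with Lemma~\ref{local} yields the uniform strict barrier
\[
\sup_{\mathcal{B}_{\alpha^*}} J_\rho \ \leq\ \sup_{\mathcal{B}_{\alpha^*}} J_{1/2} \ <\ \inf_{\partial\mathcal{B}_{2\alpha^*}} J_1 \ \leq\ \inf_{\partial\mathcal{B}_{2\alpha^*}} J_\rho.
\]
Since $m_\rho\leq \inf_{\mathcal{B}_{\alpha^*}} J_\rho$, any sequence $(u_n)\subset \mathcal{B}_{2\alpha^*}$ with $J_\rho(u_n)\to m_\rho$ must eventually satisfy $\|\nabla u_n\|_2^2<2\alpha^*$; this simultaneously gives a uniform $H^1$-bound and the ``interior'' conclusion.

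Next I would carry out the direct method. Taking such a minimizing sequence $(u_n)$, we extract a subsequence with $u_n\rightharpoonup u_\rho^*$ in $H^1(\Omega)$, $u_n\to u_\rho^*$ strongly in $L^q(\Omega)$ for all $q\in[1,2^*)$ by the compact Sobolev embedding on the bounded domain $\Omega$, and a.e.\ in $\Omega$. Strong $L^2$-convergence gives $\|u_\rho^*\|_2^2=c$, and weak lower semicontinuity of $\|\nabla\cdot\|_2^2$ gives $\|\nabla u_\rho^*\|_2^2\leq 2\alpha^*$, so $u_\rho^*\in \mathcal{B}_{2\alpha^*}$. The growth assumptions $(f_1)$–$(f_2)$ yield $|F_i(t)|\leq C(|t|^2+|t|^p)$ with $p<2^*$ for $i=1,2$, so Vitali's theorem, together with strong $L^2$- and $L^p$-convergence, implies $\int_\Omega F_i(u_n)\,dx\to\int_\Omega F_i(u_\rho^*)\,dx$. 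Thus
\[
J_\rho(u_\rho^*)\ \leq\ \liminf_{n\to\infty} J_\rho(u_n)\ =\ m_\rho,
\]
and $u_\rho^*\in\mathcal{B}_{2\alpha^*}$ forces equality. Finally, the strict barrier above forces $\|\nabla u_\rho^*\|_2^2<2\alpha^*$, i.e.\ $u_\rho^*\in\mathcal{B}_{2\alpha^*}\setminus\partial\mathcal{B}_{2\alpha^*}$.

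The argument is essentially routine once the preparatory step is in place; the one point that requires care is verifying $F_1\geq 0$, because it is this sign property of the cut-off piece that converts the single-level strict inequality of Lemma~\ref{local} into a uniform-in-$\rho$ barrier. The boundedness of $\Omega$ removes the usual noncompactness issue, so no concentration-compactness or Brezis–Lieb splitting is needed here.
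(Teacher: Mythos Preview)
Your proposal is correct and follows essentially the same route as the paper: establish the uniform strict barrier $\sup_{\mathcal{B}_{\alpha^*}}J_\rho<\inf_{\partial\mathcal{B}_{2\alpha^*}}J_\rho$ via the monotonicity in $\rho$ (which the paper also uses), then run the direct method using the compact Sobolev embedding on the bounded domain. The only cosmetic difference is that you invoke Vitali's theorem and weak lower semicontinuity of $\|\nabla\cdot\|_2^2$, whereas the paper passes through a Brezis--Lieb type splitting $\int_\Omega(F(u_{\rho,n})-F(u_\rho^*))=\int_\Omega F(u_{\rho,n}-u_\rho^*)+o(1)$ to deduce strong $H^1$-convergence; as you note, on a bounded domain this extra step is unnecessary for the existence of the minimizer.
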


\begin{proof}
	Fix $c\in (0, c^*)$. By Lemma \ref{local}, for any $\rho \in \left[\frac{1}{2},1 \right]$, we have
\begin{eqnarray*}\label{Estar}
	\sup_{u\in \mathcal{B}_{\alpha^*}}J_\rho(u)
	 \leq \sup_{u\in \mathcal{B}_{\alpha^*}}J_{\frac{1}{2}}(u)<\inf_{u\in \partial\mathcal{B}_{2\alpha^*}}J_1(u)\leq \inf_{u\in \partial\mathcal{B}_{2\alpha^*}}J_\rho(u).
\end{eqnarray*}
Let $\{u_{\rho,n}\}\subset \mathcal{B}_{2\alpha^*}$ be a minimizing sequence for $J_{\rho}$ at the level $m_{\rho}$. Clearly, $\{u_{\rho,n}\}$ is bounded in $H^1(\Omega)$. Consequently, there exist a subsequence of $\{u_{\rho,n}\}$, still denoted by $\{u_{\rho,n}\}$, and some $u^*_\rho\in H^1(\Omega)$ such that $u_{\rho,n}\rightharpoonup u^*_\rho$ in $H^1(\Omega)$ and $u_{\rho,n}\to u^*_{\rho}$ strongly in $L^r(\Omega)$ for $r\in[1,2^*)$. This implies that 
$u^*_{\rho}\in \mathcal{B}_{2\alpha^*}$ and thus 
$J_\rho(u^*_{\rho})\ge m_{\rho}.$

 By $(f_1)-(f_2)$, together with the H\"older inequality and the Lebesgue dominated convergence theorem, we have
 \begin{eqnarray*}\label{2-19-1}
  \int_{\Omega}\left(F(u_{\rho,n})-F(u^*_{\rho})\right)dx=\int_{\Omega}F(u_{\rho,n}-u^*_{\rho})dx+o_n(1)\to 0.
 \end{eqnarray*}
Therefore, we obtain
 $$
 J_\rho(u^*_{\rho})\le \liminf\limits_{n\to+\infty}J_\rho(u_{\rho,n})=m_{\rho}.
  $$
Combining this with the previous inequality, we deduce that $J_\rho(u^*_{\rho})=m_{\rho}$ and $\|\nabla \left(u_{\rho,n}-u^*_{\rho}\right)\|_2^2\to0$. This implies that $u_{\rho,n}\to u^*_{\rho}$ in $H^1(\Omega)$ and hence $u^*_{\rho}$ is a minimizer of $J_{\rho}$. Thus, the desired conclusion follows.
\end{proof}

Now we can show the mountain pass geometry of $J_{\rho}$ uniformly for $\rho\in[\frac{1}{2}, 1]$.
\begin{lemma}\label{MP}
Assume $(f_1)-(f_3)$. Then, for any $c\in (0,c^*)$,
there exist $w_1, w_2\in \mathcal{S}_c$ such that
\[c_\rho:= \inf_{\gamma\in \Gamma}\max_{t\in[0,1]}J_\rho(\gamma(t))>\max\{J_\rho(w_1), J_\rho(w_2) \} ,~~ \forall \rho\in \left[\frac{1}{2},1 \right], \]
where
	\begin{equation}\label{Gamma}
		\Gamma:=\{\gamma\in C([0,1], \mathcal{S}_c) : \gamma(0)=w_1 , ~\gamma(1)=w_2 \}.
	\end{equation}
\end{lemma}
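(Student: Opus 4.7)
The plan is to construct explicit endpoints $w_1$, $w_2 \in \mathcal{S}_c$ such that any continuous path between them in $\mathcal{S}_c$ must cross $\partial \mathcal{B}_{2\alpha^*}$, and then exploit the uniform barrier estimate from Lemma \ref{local} together with the monotonicity of $\rho \mapsto J_\rho$. The key preliminary observation is that since $f_1(t)t \geq 0$ implies $F_1 \geq 0$, we have $J_1(u) \leq J_\rho(u) \leq J_{1/2}(u)$ for all $u \in H^1(\Omega)$ and all $\rho \in [\tfrac{1}{2},1]$. Combining this with Lemma \ref{local} gives the uniform chain
\[
\sup_{\mathcal{B}_{\alpha^*}} J_\rho \;\leq\; \sup_{\mathcal{B}_{\alpha^*}} J_{1/2} \;<\; \inf_{\partial \mathcal{B}_{2\alpha^*}} J_1 \;\leq\; \inf_{\partial \mathcal{B}_{2\alpha^*}} J_\rho ,
\]
and I will denote the middle quantity $\inf_{\partial \mathcal{B}_{2\alpha^*}} J_1$ by $\beta$.

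For the low end, I would simply take the constant function $w_1 \equiv \sqrt{c/|\Omega|} \in \mathcal{S}_c$, which satisfies $\|\nabla w_1\|_2^2 = 0$, hence $w_1 \in \mathcal{B}_{\alpha^*}$. For the high end, fix any $x_0 \in \Omega$ and any $\phi \in C_c^\infty(\mathbb{R}^N)$ with $\supp \phi \subset B_R(0)$ and $\|\phi\|_2^2 = c$. Define
\[
\phi_t(x) := t^{N/2}\phi\bigl(t(x - x_0)\bigr), \qquad t \geq 1,
\]
extended by $0$ to $\Omega$. For $t$ large enough, $\supp \phi_t \subset B_{R/t}(x_0) \subset \Omega$, and a change of variables shows $\|\phi_t\|_2^2 = c$ and $\|\nabla \phi_t\|_2^2 = t^2 \|\nabla \phi\|_2^2$. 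Similarly, for $i=1,2$,
\[
\int_\Omega F_i(\phi_t)\,dx \;=\; t^{-N}\int_{\mathbb{R}^N} F_i\bigl(t^{N/2}\phi(y)\bigr)\,dy .
\]

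Next I would estimate $J_\rho(\phi_t)$ uniformly in $\rho$. By construction $F_2$ is uniformly bounded (it is constant for $|s| \geq R_0 + 1$ and continuous), so $\left|\int_\Omega F_2(\phi_t)\right| \leq C|\Omega|$. On the other hand, $(f_1)$--$(f_2)$ and the definition of $f_1$ give $F_1(s) \geq \frac{a_0}{2p}|s|^p - C'$ for all $s$, so
\[
\int_\Omega F_1(\phi_t)\,dx \;\geq\; \frac{a_0}{2p}\, t^{N(p-2)/2}\|\phi\|_p^p - C'|\Omega| .
\]
Since $p > 2_*$ gives $N(p-2)/2 > 2$ and $\rho \geq \tfrac{1}{2}$, for $t$ large
\[
J_\rho(\phi_t) \;\leq\; \tfrac{t^2}{2}\|\nabla\phi\|_2^2 + C|\Omega| - \tfrac{a_0}{4p}\, t^{N(p-2)/2}\|\phi\|_p^p + C'|\Omega| \;\longrightarrow\; -\infty
\]
uniformly in $\rho \in [\tfrac{1}{2},1]$. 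I then fix $t_0$ so large that $\|\nabla \phi_{t_0}\|_2^2 > 2\alpha^*$ and $J_\rho(\phi_{t_0}) < \beta$ for all $\rho \in [\tfrac{1}{2},1]$, and set $w_2 := \phi_{t_0}$.

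Finally, any $\gamma \in \Gamma$ is a continuous path in $\mathcal{S}_c$ with $\|\nabla \gamma(0)\|_2^2 = 0 < 2\alpha^* < \|\nabla \gamma(1)\|_2^2$, so the intermediate value theorem applied to $t \mapsto \|\nabla \gamma(t)\|_2^2$ yields $t^* \in (0,1)$ with $\gamma(t^*) \in \partial \mathcal{B}_{2\alpha^*}$. Hence $\max_{t} J_\rho(\gamma(t)) \geq \inf_{\partial \mathcal{B}_{2\alpha^*}} J_\rho \geq \beta$, and taking the infimum over $\Gamma$ gives $c_\rho \geq \beta > \max\{J_\rho(w_1), J_\rho(w_2)\}$ uniformly in $\rho$. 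The only delicate point is the uniform-in-$\rho$ divergence $J_\rho(\phi_t) \to -\infty$: it is what forces us to decompose $f$ into $f_1 + f_2$ so that the supercritical part $F_1$ is nonnegative and contributes with a coefficient bounded below by $\tfrac{1}{2}$ across the whole family of functionals.
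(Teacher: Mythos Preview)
Your proof is correct and follows essentially the same strategy as the paper: build a concentrating family $\phi_t\in\mathcal{S}_c$ whose energy tends to $-\infty$ uniformly in $\rho$, and use the barrier from Lemma~\ref{local} at $\partial\mathcal{B}_{2\alpha^*}$ to separate the endpoints from the mountain-pass level. The only noteworthy difference is your choice of $w_1$: the paper takes $w_1=u^*_{1/2}$, the local minimizer supplied by Lemma~\ref{exist}, whereas you take the constant $w_1=\sqrt{c/|\Omega|}$, which lies in $\mathcal{B}_{\alpha^*}$ trivially since $\|\nabla w_1\|_2=0$. Your choice is slightly more economical because it avoids invoking Lemma~\ref{exist}; the paper itself notes this alternative in the remark following Theorem~\ref{Thm-appro}.
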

\begin{proof}
%In views of Lemma \ref{exist} ,
%	for all $\rho\in [\frac{1}{2},1]$, we have that $w_\rho$ is a local minimizer of $J_\rho$ in $\mathcal{S}_c$.
%Moreover, $w_\rho\in \mathcal{B}_{2\alpha^*}\backslash \partial\mathcal{B}_{2\alpha^*}$.
%For any $\rho\in [\frac{1}{2}, 1]$, there exists a ball $B(u_\rho, r_\rho)$ which center is $u_\rho$ and radius $r_\rho >0$ such that $k_a$ strictly minimizer in $\overline{B(k_a, r_\rho)}$ of $J_\rho$.

Let $B_r(x)$ denote a ball in $\mathbb{R}^N$, centered at $x\in \mathbb{R}^N$ with radius $r>0$.
Take $\phi \in C^\infty_0(B_1(0))$ with $\phi>0$ in $B_1(0)$ such that $\int_{B_1(0)}\phi^2 =1$.
For $n\in \mathbb{N}$, $x_0\in \Omega$, define
\begin{equation*}
	\varphi_n(x):= c^\frac{1}{2} n^\frac{N}{2} \phi (n(x-x_0)),~x\in \Omega.
\end{equation*}
We can verify that $\varphi_n\in\mathcal{S}_c$ and $supp(\varphi_n)\subset B_\frac{1}{n}(x_0)\subset \Omega$ for sufficiently large $n$.

By $(f_1)-(f_2)$, taking $R_0>0$ larger if necessary, there exist constants  $C_{R_0}, C'_{R_0}, C_p>0$ such that
\begin{eqnarray*}
&&F_1(t)\ge \frac{C_p a_0}{2p}|t|^p,~~ \forall |t|\ge 2R_0,~~~F_1(t)\ge -C_{R_0},~~ \forall |t|\le 2R_0,\\
&&F_2(t)\ge -C'_{R_0}, \forall t\in \mathbb{R}.
\end{eqnarray*}
Set $\Omega_{n, 2R_0} :=\{x\in \Omega: |\varphi_n|\leq 2R_0 \}$.
Then, for sufficiently large $n$ such that $\max\limits_{x\in \overline\Omega}|\varphi_n|> 2R_0$, we have for all $\rho\in [\frac{1}{2},1]$,
\begin{eqnarray*}
J_\rho(\varphi_n)&\leq& \frac{1}{2}\int_{\Omega}|\nabla \varphi_n(x)|^2 + (C_{R_0}+C'_{R_0})|\Omega| -\rho\int_{\Omega\backslash \Omega_{n, 2R_0}}  \frac{C_p a_0}{2p} |\varphi_n|^p \\
%&\leq \frac{1}{2}\int_{\Omega}|\nabla \varphi_n(x)|^2 + (C_{R_0}+C'_{R_0})|\Omega| -\int_{\Omega}  \frac{C_p a_0}{4p} |\varphi_n|^p+ \int_{\Omega_{n, 2R_0}} \frac{C_p a_0}{4p} |\varphi_n|^p\\
&=& \frac{cn^2}{2}\int_{B_1(0)}|\nabla \phi(x)|^2 +(C_{R_0}+C'_{R_0})|\Omega|+ \frac{2^pC_P a_0}{4p}R_0^p|\Omega|\\
&&-\frac{C_Pa_0}{4p}c^\frac{p}{2} n^{\frac{pN-2N}{2}}\int_{B_1(0)}|\phi(x)|^p\to -\infty.
\end{eqnarray*}
Hence, there exists $n_0>0 $ sufficiently large such that
\begin{equation*}
	J_\rho(\varphi_{n_0})<J_1(u^*_{\frac{1}{2}})\le J_{\rho}(u^*_{\frac{1}{2}}),  ~\forall \rho\in \left[\frac{1}{2},1 \right].
\end{equation*}
Choose $w_1=u^*_{\frac{1}{2}}$ and $w_2=\varphi_{n_0}$. Clearly, $u^*_{\frac{1}{2}}\in \mathcal{B}_{2\alpha^*}\setminus \partial\mathcal{B}_{2\alpha^*}, \varphi_{n_0}\not\in \mathcal{B}_{2\alpha^*}$.
By continuity,
for any $\gamma \in \Gamma$, there exists $t_\gamma \in [0,1]$ such that $ \gamma(t_\gamma)\in \partial \mathcal{B}_{2\alpha^*}$. Thus, by Lemma \ref{local}, it follows that
\begin{align*}
	\max_{t\in[0,1]}J_\rho(\gamma(t))\geq J_\rho(\gamma(t_\gamma)) &\geq  \inf_{u\in \partial \mathcal{B}_{2\alpha^*}}J_\rho(u)>\sup_{u\in \mathcal{B}_{\alpha^*}}J_{\frac{1}{2}}(u)\\
	& \geq J_\rho(u^*_{\frac{1}{2}})=\max\{J_\rho(w_1), J_\rho(w_2)\},  ~\forall \rho\in \left[\frac{1}{2},1 \right].
\end{align*}
The proof is now complete.
\end{proof}

\section{Existence of MP solutions for a dense set}

In this section, we establish the existence of a bounded Palais-Smale sequence at level $c_\rho$ for almost every $\rho\in [\frac{1}{2},1]$.
 Our approach involves applying a recently developed min-max principle on the $L^2$-sphere, as detailed in \cite{BCJS-202210}. This principle integrates the monotonicity trick presented in \cite{J-PRSE1999} with the min-max theorem enriched by second-order insights from Fang and Ghoussoub \cite{FG-1994}, which is also elaborated upon in \cite[Chapter 11]{Ghoussoub}.

For a domain $D\subset \mathbb{R}^N$ and $\phi, u\in H^1(D)$, we consider
\begin{equation*}\label{morse}
	Q_{\lambda,\rho}(\phi;u;D):= \int_{D}|\nabla \phi |^2 dx+ \lambda\int_{D}|\phi |^2 dx -\int_{D} f'_2(u)\phi^2 dx -\int_{D}\rho f'_1(u)\phi^2 dx,
\end{equation*}
where $\lambda\in \mathbb{R}, \rho\in[\frac{1}{2},1]$.
The Morse index of $u$, denote by $m(u)$, is the maximum dimension of a subspace $W\subset H^1(D)$ such that $Q_{\lambda, \rho}(\phi;u;D)<0$ for all $\phi \in W\backslash \{0\}$.

%\begin{remark}
%	We say that a solution $U$ is stable outside a compact set $K$ if $Q(\phi;U;D)\geq 0$ for all $\phi\in H^1(D)\cap C_c(D\backslash K)$.
%\end{remark}
% \begin{lemma}
% 	Let $u\in H^1(D)$ be any non-trivial solution of \eqref{linshi}. Then its Morse index $m(u)$ is strictly positive.
% \end{lemma}
% \begin{proof}
% 	Since $u$ is a solution to \eqref{linshi}, then it holds
% 	\begin{equation*}
% 		\int_D |\nabla u|^2 dx +\lambda\int_D u^2 dx=\rho\int_D f(u)u dx.
% 	\end{equation*}
% 	Taking $u$ as a test function, then we have
% 	\begin{equation*}
% 		Q(u;u;D)= \int_D |\nabla u|^2 dx +\lambda\int_D u^2 dx-\rho \int_D f'(u)u^2 dx=\rho(2-p)\int_D |u|^p dx<0.
% 	\end{equation*}
% 	Combining with the density of $H^1(D)\cap C_c(D)$ in $H^1(D)$, we conclude the proof.
% \end{proof}

%We introduce a stronger result carrying also an âapproximate Morse-indexâ information, Theorem 3.10 below, proved in [11].
To state the abstract minimax theorem, we recall a general setting introduced in \cite{BL-1983}.
Let $(E, \langle\cdot,\cdot\rangle)$ and $(H,(\cdot, \cdot))$
be two infinite dimensional Hilbert spaces such that
\[ E\hookrightarrow H \hookrightarrow E'\]
with continuous injections. The continuous injection $E\hookrightarrow H$  has a norm at most 1 and $E$ is identified with its image in $H$. For $u\in E$, we denote $\|u \|^2= \langle u,u \rangle$ and $|u|^2= (u,u)$. For $a\in (0, +\infty)$, we define $S(a):= \{u\in E, |u|^2= a\}$. We denote by $\|\cdot\|_*$ and $\|\cdot\|_{**}$, respectively, the operator norm of $\mathcal{L}(E, R)$ and of $\mathcal{L}(E, \mathcal{L}(E, \mathbb{R}))$.
\begin{definition} \cite{BCJS-202210}
	Let $\phi: E\to \mathbb{R}$ be a $C^2$-functional on $E$ and $\alpha\in (0,1]$. We say that $\phi'$ and $\phi''$ are $\alpha$-H\"{o}lder continuous on bounded sets if for any $R>0$ one can find $M=M(R)>0$ such that for any $u_1, u_2\in B(0, R)$:
	\begin{equation}
		\|\phi'(u_1)-\phi'(u_2)\|_* \leq M\|u_1-u_2\|^\alpha,~~
		\|\phi''(u_1)-\phi''(u_2)\|_{**} \leq M\|u_1-u_2\|^\alpha.
	\end{equation}
\end{definition}
\begin{definition} \cite{BCJS-202210}
	Let $\phi$ be a $C^2$-functional on $E$, for any $u\in E$ define the continuous bilinear map:
	\[D^2\phi(u):= \phi''(u)-\frac{\phi'(u)\cdot u}{|u|^2}(\cdot,	\cdot). \]
\end{definition}
%\begin{remark}
%If $u$ is a critical point of the functional $\phi|_{S(a)}$, then the restriction of $D^2\phi(u)$ to $T_uS(a)$ coincides with the constrained Hessian of $\phi|_{S(a)}$ at $u$.
%\end{remark}
\begin{definition}\cite{BCJS-202210}\label{amor}
For any $u\in S(a)$ and $\theta>0$, we define an approximate Morse index by
\begin{align*}
\tilde{m}_\theta(u):=\sup \{dim ~L | &L~ {\rm is~ a~ subspace ~of~ } T_uS(a)~ {\rm such~ that~} D^2\phi(u)[\phi,\phi]<-\theta\|\phi\|^2, ~\forall \varphi \in L \backslash \{0\}\}.
\end{align*}
\end{definition}
If $u$ is a critical point for the constrained functional $\phi|_{S(a)}$ and $\theta=0$, then $\tilde{m}_\theta(u)$ is the Morse index of $u$ as a constrained critical point.
%Let $J_\rho'$ and $J_\rho ''$ denote the free first and second derivatives. While let $d|_{S(a)} J_\rho$ and $d^2|_{S(a)} J_\rho$ denote the first and second derivatives constrained on $S(a)$. By similar arguments as in (\cite{CJS-2022}), The relation between the two notions of the Morse index introduced so far is clarified by the following statement.
%\begin{lemma}\label{relationMorse}.
%	Let $u\in \mathcal{S}_c$ be a critical point of $J_\rho$ constrained on $\mathcal{S}_c$ with $\tilde{m}_0(u)\leq k$ for some $k\in \mathbb{N}$, as a constrained critical point. Then the Morse index of u satisfies $m(u)\leq k+1$.
%\end{lemma}

\begin{theorem}\label{abstract theorem}
	(\cite[Theorem 1.5]{BCJS-202210}).
	Let $I\subset(0, +\infty)$ be an interval and consider a family of $C^2$ functionals $\Phi_\rho : E \to \mathbb{R}$ of the form:
	\begin{equation*}
		\Phi_\rho(u)= A(u)-\rho B(u), ~~\rho \in I,
	\end{equation*}
where $B(u)\geq 0$ for every $u\in E$, and
\begin{equation*}
	{\rm either~~ }A(u)\to +\infty ~~{\rm or}~~ B(u)\to +\infty
	{\rm ~~as~~} u\in E ~~{\rm and ~~}\|u\|\to +\infty.
\end{equation*}	
 Suppose moreover that $\Phi'_\rho$ and $\Phi''_\rho$ are $\alpha$-H\"{o}lder continuous on bounded sets for some $\alpha \in (0,1]$.
Finally, suppose that there exist $w_1, w_2\in S(a)$ (independent of $\rho$) such that,  set 
\begin{equation*}
	\Gamma:=\{\gamma\in C([0,1], S(a)): \gamma(0)=w_1, \gamma(1)=w_2\},
\end{equation*}
we have
\begin{equation}
	c_\rho:=\inf_{\gamma\in \Gamma}\max_{t\in [0,1]}\Phi_\rho(\gamma(t))
	>\max\{\Phi_\rho(w_1), \Phi_\rho(w_2)\},~~\forall \rho \in I.
\end{equation}
Then, for almost every $\rho\in I$, there exist	sequence $\{u_n\}\subset S(a)$ and $\zeta_n\to 0^+$ such that, as $n \to \infty$,
\begin{itemize}
	\item[(i)]  $\Phi_\rho(u_n)\to c_\rho$;
	\item[(ii)] $\|\Phi'_\rho|_{S(a)}(u_n)\|\to 0$;
	\item[(iii)]  $\{u_n\}$ is bounded in $E$;
	\item[(iv)] $\tilde{m}_{\zeta_n}(u)\leq 1$.
\end{itemize}
\end{theorem}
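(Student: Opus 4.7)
My strategy is to combine the classical monotonicity trick of Struwe-Jeanjean with the second-order minimax principle of Fang-Ghoussoub, both adapted to the constrained setting on $S(a)$, and to execute them in a single coupled deformation rather than separately. The coercivity hypothesis together with $B\geq 0$ forces $\rho\mapsto c_\rho$ to be non-increasing on $I$, hence differentiable on a full-measure subset $I_0\subset I$. Fixing $\rho\in I_0$, I would select a sequence $\rho_n\uparrow\rho$ and paths $\gamma_n\in\Gamma$ that are nearly optimal for $\Phi_{\rho_n}$. Since $\partial_\rho\Phi_\rho=-B$ and $B\geq 0$, the one-sided derivative $c'_\rho$ controls the value of $B(\gamma_n(t))$ at those $t$ where $\gamma_n$ is near its maximum; combined with the coercivity alternative, this keeps such near-maximum points inside a bounded ball of $E$. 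A standard deformation then yields a Palais-Smale sequence $\{u_n\}\subset S(a)$ for the constrained functional $\Phi_\rho|_{S(a)}$ at level $c_\rho$ that is bounded in $E$, delivering (i)-(iii).

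For the Morse index bound (iv), I would invoke a second-order minimax theorem of Fang-Ghoussoub type on the Hilbert manifold $S(a)$. Since $\Gamma$ is generated by $1$-dimensional homotopies, the level $c_\rho$ can be approached along sequences $\{u_n\}$ whose approximate Morse index $\tilde m_{\zeta_n}(u_n)$ on the sphere is at most $1$: heuristically, if at level $c_\rho$ every almost-critical point admitted two linearly independent tangent directions along which $D^2\Phi_\rho(u)[\varphi,\varphi]<-\zeta\|\varphi\|^2$, one could push any nearly optimal path through $c_\rho$ inside the $2$-dimensional negative cone while keeping its endpoints fixed (they lie strictly below $c_\rho$), strictly decreasing $\max_t\Phi_\rho$ and contradicting the definition of $c_\rho$. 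The $C^2$ and $\alpha$-H\"older hypotheses on $\Phi'_\rho,\Phi''_\rho$ are precisely what is needed to build the pseudo-gradient-type flow in the tangent bundle $TS(a)$ and to integrate it on a fixed time interval.

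The genuine obstacle, and the main novelty over earlier versions of either trick, is that the two pieces must be executed \emph{simultaneously}. Jeanjean's argument produces a bounded Palais-Smale sequence, but nothing in it controls second-order information; conversely, the Fang-Ghoussoub deformation is performed at a fixed level of a single functional and does not by itself deliver boundedness. My plan is therefore to define, for each $\rho$, a refined minimax value $\tilde c_\rho$ taken over paths avoiding a shrinking neighborhood of the ``bad set'' where $D^2\Phi_\rho$ has at least two strongly negative directions; show by the second-order deformation that $\tilde c_\rho=c_\rho$; and then apply the difference-quotient bound to $\tilde c_\rho$ for $\rho\in I_0$ to extract a single sequence enjoying (i)-(iv) at once. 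The technical core is the construction of this coupled flow under only $\alpha$-H\"older continuity of $\Phi''_\rho$, which is exactly the regularity assumed.
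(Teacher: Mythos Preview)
The paper does not prove this theorem at all: it is quoted verbatim from \cite[Theorem 1.5]{BCJS-202210} and used as a black box in the proof of Theorem~\ref{Thm-appro}. There is therefore no ``paper's own proof'' against which to compare your proposal.

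That said, your outline is an accurate high-level description of the strategy actually carried out in \cite{BCJS-202210}: the monotonicity trick of \cite{J-PRSE1999} (differentiability of $\rho\mapsto c_\rho$ a.e., difference-quotient control of $B$ near the max, boundedness of the resulting Palais--Smale sequence) coupled with a constrained version of the Fang--Ghoussoub second-order deformation \cite{FG-1994,Ghoussoub} to bound the approximate Morse index by the dimension of the minimax class. You have correctly identified the crux, namely that the bounded-PS construction and the index control must be performed in a single coupled argument rather than sequentially, and that the $\alpha$-H\"older regularity of $\Phi'_\rho,\Phi''_\rho$ is what permits building the required second-order flow on $TS(a)$. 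Your sketch is consistent with the published proof; the only caveat is that the actual execution in \cite{BCJS-202210} is technically substantial and your paragraph does not constitute a proof, only a correct roadmap.
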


Define $h_{\rho}(t)=\rho f_1(t)+ f_2(t)$ for $t\in \mathbb{R}$ and $\rho\in [\frac{1}{2},1]$.
In the following, we obtain the main result of this section.

\begin{theorem}\label{Thm-appro}
	Assume $(f_1)-(f_3)$ and $c\in (0, c^*)$. Then, for almost every $\rho\in [\frac{1}{2}, 1]$, there exists a critical point $u_\rho$ of $J_\rho$ on $\mathcal{S}_c$ at level $c_\rho$, which solves the following problem
	
\begin{equation}\label{approximate}
\begin{cases}
-\Delta u_\rho+ \lambda_\rho u_\rho=h_\rho(u_\rho)  & {\rm in} \,~\Omega,\\
\frac{\partial u_\rho}{\partial \nu}=0  \, &{\rm on}\,~\partial \Omega
\end{cases}
\end{equation}
for some $\lambda_\rho \in \mathbb{R}$.
Moreover, the Morse index of $\{u_{\rho}\}$ satisfies $m(u_\rho)\le 2$.
\end{theorem}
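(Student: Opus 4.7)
The plan is to apply the abstract minimax principle (Theorem \ref{abstract theorem}) to the family $\{J_\rho\}_{\rho\in[1/2,1]}$ on $E=H^1(\Omega)$ with $H=L^2(\Omega)$, written as $J_\rho=A-\rho B$ where
\[
A(u):=\tfrac{1}{2}\int_\Omega|\nabla u|^2\,dx-\int_\Omega F_2(u)\,dx,\qquad B(u):=\int_\Omega F_1(u)\,dx.
\]
The nonnegativity $B(u)\geq 0$ is automatic since $\eta\geq 0$ and $f(t)t>0$ for $|t|\geq R_0$ while $\eta$ vanishes on $[-R_0,R_0]$, whence $f_1(t)t\geq 0$ and so $F_1\geq 0$. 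Because $f_2$ has compact support, $F_2$ is bounded, so $A(u)\to+\infty$ as $\|u\|\to+\infty$, giving the coercivity-type hypothesis. The $\alpha$-Hölder regularity of $J'_\rho,J''_\rho$ on bounded sets is routine from $(f_1)$--$(f_2)$ via standard Nemytskii estimates, since $f\in C^1$ with polynomial growth below $2^*$. The uniform mountain pass geometry is exactly the content of Lemma \ref{MP}. Hence Theorem \ref{abstract theorem} applies and yields, for almost every $\rho\in[1/2,1]$, a bounded sequence $\{u_n\}\subset\mathcal{S}_c$ and $\zeta_n\to 0^+$ such that $J_\rho(u_n)\to c_\rho$, $\|J_\rho|_{\mathcal{S}_c}'(u_n)\|\to 0$, and $\tilde m_{\zeta_n}(u_n)\leq 1$.

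Next I would establish strong convergence. By Rellich--Kondrachov, up to a subsequence $u_n\rightharpoonup u_\rho$ in $H^1(\Omega)$ and $u_n\to u_\rho$ in $L^r(\Omega)$ for every $r\in[1,2^*)$. The constrained Palais--Smale condition produces Lagrange multipliers $\lambda_n\in\mathbb{R}$ via $-\Delta u_n+\lambda_n u_n-h_\rho(u_n)\to 0$ in $(H^1(\Omega))^*$; testing against $u_n$ gives
\[
\lambda_n c=\int_\Omega h_\rho(u_n)u_n\,dx-\int_\Omega|\nabla u_n|^2\,dx+o(1),
\]
which, together with the growth of $h_\rho$ and the boundedness of $\{u_n\}$ in $H^1$, shows $\lambda_n$ is bounded. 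Passing to a further subsequence, $\lambda_n\to\lambda_\rho$, and the strong $L^r$-convergence together with $(f_1)$--$(f_2)$ allows us to pass to the limit in the equation, yielding that $u_\rho\in\mathcal{S}_c$ solves (\ref{approximate}) in the weak sense with Neumann boundary condition. Testing the equation against $u_n-u_\rho$ and exploiting compactness of the lower-order terms then upgrades the convergence to strong convergence in $H^1(\Omega)$, so $J_\rho(u_\rho)=c_\rho$.

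It remains to transport the approximate Morse index bound to the limit to obtain $m(u_\rho)\leq 2$. Suppose by contradiction that the constrained Morse index of $u_\rho$ is at least $2$: there is a two-dimensional subspace $L\subset T_{u_\rho}\mathcal{S}_c$ and $\delta>0$ such that $D^2 J_\rho(u_\rho)[\varphi,\varphi]\leq -\delta\|\varphi\|^2$ for all $\varphi\in L\setminus\{0\}$. Using the strong convergence $u_n\to u_\rho$ in $H^1$, together with $L^\infty$-type approximations of a basis of $L$ by smooth functions and orthogonal projection onto $T_{u_n}\mathcal{S}_c$, I would construct a two-dimensional subspace $L_n\subset T_{u_n}\mathcal{S}_c$ on which $D^2 J_\rho(u_n)[\cdot,\cdot]<-\frac{\delta}{2}\|\cdot\|^2$ for large $n$, contradicting $\tilde m_{\zeta_n}(u_n)\leq 1$ once $\zeta_n<\delta/2$. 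Hence the constrained Morse index of $u_\rho$ is $\leq 1$, and the classical comparison between constrained and unconstrained Morse indices (the sphere has codimension one, so at most one additional negative direction along the normal $u_\rho$) gives $m(u_\rho)\leq 2$.

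The main obstacle is this last step: preserving the approximate Morse index in the limit requires a careful projection/perturbation argument to pass from tangent vectors at $u_\rho$ to tangent vectors at $u_n$ while controlling both the quadratic form $D^2 J_\rho$ and the decay rate $\zeta_n$. The strong $H^1$ convergence and the $\alpha$-Hölder continuity of $J_\rho''$ are precisely what makes this transfer possible; the same ingredients also drive the compactness argument for the PS sequence, so both difficulties are resolved in a unified way by exploiting the bounded-domain compactness inherent to $\Omega$.
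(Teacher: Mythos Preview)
Your proposal is correct and follows essentially the same route as the paper: apply the abstract minimax theorem to $J_\rho=A-\rho B$ with $A,B$ as you wrote, verify $B\geq 0$ and $A\to+\infty$, invoke Lemma \ref{MP} for the geometry, then use compact embeddings to pass to the limit in both the equation and the approximate Morse bound. Your treatment of the Morse index step---explicitly projecting a negative two-plane from $T_{u_\rho}\mathcal{S}_c$ onto $T_{u_n}\mathcal{S}_c$---is in fact slightly more careful than the paper's written argument, which applies $D^2J_\rho(u_n)$ directly to the fixed subspace $W_0\subset T_{u_\rho}\mathcal{S}_c$ and appeals to \cite[Proposition 3.5]{BCJS-202212} for the details; both arrive at the same conclusion via the $\alpha$-H\"older continuity of $J_\rho''$ and the strong $H^1$ convergence.
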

\begin{proof}
We will apply Theorem \ref{abstract theorem} to the family of functionals $J_\rho$, where $E=H^1(\Omega)$, $H=L^2(\Omega)$, $S(a)=\mathcal{S}_c$ and $\Gamma$ is defined by \eqref{Gamma}.
Specifically, we set
\begin{equation*}
	A(u)= \frac{1}{2}\int_{\Omega}|\nabla u|^2dx- \int_\Omega F_2(u) dx~~{\rm and }~~B(u)=\int_{\Omega} F_1(u) dx.
\end{equation*}
Thus, we have $J_\rho(u)= A(u)-\rho B(u).$
Given that $u\in \mathcal{S}_c$ and considering the boundedness of $\int_\Omega F_2(u)dx$, we deduce that
\[A(u)\to \infty~~{\rm as }~~ \|u\|\to +\infty.\]
Moreover, by assumptions $(f_1)-(f_2)$, it follows that $J'_\rho$ and $J''_\rho$ are locally H\"{o}lder continuous on $\mathcal{S}_c$.
By Lemma \ref{MP}, we can apply Theorem \ref*{abstract theorem} to produce a bounded Palais-Smale sequence $\{u_n\}\subset H^1(\Omega)$ for the constrained functional $J_{\rho}|_{\mathcal{S}_c}$ at level $c_\rho$ for almost every $\rho\in [\frac{1}{2}, 1]$.
 Additionally, there exists a sequence $\zeta_n \to 0^+$ such that $\tilde{m}_{\zeta_n}(u_n)\leq 1$.
 
Since $\|J'_\rho |_{\mathcal{S}_c}(u_n) \|\to 0$, and by the boundedness of $\{u_n\}$, there exists a sequence $\{\lambda_n\}\subset \mathbb{R}$ such that for any $\varphi\in H^1(\Omega)$, we have
\begin{equation}\label{weaksolutionsequence}
\int_{\Omega} \nabla u_n \nabla \varphi dx+\lambda_n \int_{\Omega} u_n \varphi dx - \int_{\Omega} h_{\rho}(u_n) \varphi dx =o(1).
\end{equation}
This implies that
\begin{equation*}\label{unastest}
	\int_{\Omega}|\nabla u_n|^2 dx + \lambda_n c-\int_{\Omega} h_{\rho}(u_n)u_n dx\to 0.
\end{equation*}
Using $(f_1)-(f_2)$ again, we deduce that $\{\lambda_n\}$ is bounded. Therefore, up to a subsequence, we may assume that $\lambda_n \to \lambda_\rho \in \mathbb{R}$ and $u_n\rightharpoonup u_{\rho}$ weakly in $H^1(\Omega)$.
By \eqref{weaksolutionsequence}, we obtain
\begin{equation*}
	\int_{\Omega} \nabla u_\rho \nabla \varphi dx+\lambda_\rho\int_{\Omega} u_\rho \varphi dx -\int_{\Omega}h_{\rho}(u_\rho) \varphi dx=0,
\end{equation*}
which implies that $u_\rho$ weakly solves \eqref{approximate}. By the compact embedding $H^1(\Omega)\hookrightarrow L^r(\Omega)$ for $r\in[1,2^*)$ and standard arguments, we obtain that $u_n\to u_{\rho}$ strongly in $H^1(\Omega)$.

It remains to show that $m(u_\rho)\le 2$.
Since $T_u\mathcal{S}_c$ has codimension 1, noting that $d^2|_{\mathcal{S}_c}J_\rho$ and $T_{u_n}\mathcal{S}_c$ vary with continuity, by $\tilde{m}_{\zeta_n}(u_n)\leq 1$ it follows that $\tilde{m}_0(u_\rho) \leq 1$. 
Then, we can use similar arguments as in \cite[Proposition 3.5]{BCJS-202212} to show $m(u_\rho)\leq 2$.
In fact,  since the tangent space $T_{u}\mathcal{S}_c$ has codimension $1$, it suffices to show that $u_\rho\in \mathcal{S}_c$ has Morse index at most $1$ as a constrained critical point. If this were not the case, by Definition \ref{amor}, we may assume by contradiction that there exists a subspace $W_0 \subset T_u\mathcal{S}_c$ with $dim W_0 = 2$ such that
\begin{equation}\label{bumiao}
	D^2J_\rho(u_\rho)[w, w]<0~\mbox{for~all}~w\in W_0\backslash \{0\}.
\end{equation}
Since $W_0$ is finite-dimensional, there exists a constant $\beta>0$ such that
\begin{equation*}
	D^2J_\rho(u_\rho)[w, w]<-\beta~\mbox{for~all}~w\in W_0\backslash \{0\}~~\mbox{with}~~\|w\|=1.
\end{equation*}
Using the homogeneity of $D^2J_\rho(u_\rho)$, we deduce that 
\begin{equation*}
	D^2J_\rho(u_\rho)[w, w]<-\beta \|w\|^2~\mbox{for~all}~w\in W_0\backslash \{0\}.
\end{equation*}
Now,  since $J_\rho'$ and $J_\rho''$ are $\alpha$-H\"{o}lder continuous on bounded sets for some $\alpha\in (0,1]$, it follows that there exists a sufficient small $\delta_1>0$ such that, for any $v\in \mathcal{S}_c$ satisfying $\|v-u_\rho\|\leq \delta_1$,
\begin{equation}\label{sanbai}
	D^2J_\rho(v)[w, w]<-\frac{\beta}{2} \|w\|^2~\mbox{for~all}~w\in W_0\backslash \{0\}.
\end{equation}
Hence, using the fact that $\|u_n-u_\rho\| \leq\delta_1$ for sufficiently large $n\in \mathbb{N}$,  and in view of \eqref{bumiao}, \eqref{sanbai}, and $\zeta_n\to 0^+$, we obtain
\begin{equation*}\label{qifur}
	D^2J_\rho(u_n)[w, w]<-\frac{\beta}{2} \|w\|^2<\zeta_n \|w\|^2~\mbox{for~all}~w\in W_0\backslash \{0\}
\end{equation*}
for any such large $n$. Since dim$W_0>1$, this provides a contradiction with Theorem \ref{abstract theorem} (iv), recalling that $\zeta_n\to 0$. 
%Since dim$W_0>1$ and recalling that
%\begin{equation*}
%	D^2J_\rho(u_n)[w, w]+\lambda_\rho \|w\|^2=D^2J_\rho(u_n)[w, w]
%\end{equation*}

\end{proof}

\begin{remark}
Note that for any $\rho\in [\frac{1}{2}, 1]$, the constant function $u_c:=\left(\frac{c}{|\Omega|}\right)^{\frac{1}{2}}$ is always a solution of (\ref{approximate}) on $\mathcal{S}_c$ for $\lambda=\frac{h_{\rho}(u_c)}{u_c}$. 
Under the assumptions $(f_1)-(f_2)$, we  can compute the constraint Morse index  $\tilde{m}_0(u_c)$ corresponding to $J_{\rho}$, as demonstrated in \cite[Proposition 2.1]{CJS-2022} (see also \cite[Proposition 4.1]{CDS}).  Specifically, for any $c\in(0,c^*)$ with some proper $c^*>0$, we have $\tilde{m}_0(u_c)=0$ for all $\rho\in[\frac{1}{2}, 1]$. This implies that $u_c$ is a local minimizer of $J_{\rho}$ for every $\rho\in [\frac{1}{2}, 1]$. Based on this observation, we may select $w_1=u_c$ in Lemma 2.3 to construct the uniform mountain pass geometry for $J_{\rho}$. Furthermore, the mountain pass type solution $u$ obtained in Theorem 1.1 cannot be a constant function, as this would contradict the fact that 
$u_c$ is a local minimizer.
\end{remark}

\section{Blow up analysis}\label{blowup}
In this section, we develop a blow-up analysis for the sequence $\{u_{\rho_n}\}$. The goal of this analysis is to prove that $\{u_{\rho_n}\}$ is bounded in $H^1(\Omega)$. Consequently, we aim to show that $\{u_{\rho_n}\}$ converges strongly in $H^1(\Omega)$ to a constrained critical point of $J_1$ as $\rho_n\to 1^-$.

For simplicity, we
denote $u_n:= u_{\rho_n}$, $\lambda_n:=\lambda_{\rho_n}$, $c_n:= c_{\rho_n}$ in the following discussion. Here, $u_n$ weakly solves the following problem
\begin{equation}\label{withn}
\begin{cases}
-\Delta u_n+ \lambda_n u_n=h_{\rho_n}(u_n)  & {\rm in} \,~ \Omega,\\
\frac{\partial u_n}{\partial \nu}=0 \, &{\rm on}\,~\partial \Omega,\\
\int_{\Omega}|u_n|^2 dx=c,
\end{cases}
\end{equation}
where $\lambda_n\in \mathbb{R}$ and $\rho_n \to 1^-$. By Theorem \ref{Thm-appro}, we have $m(u_n)\le2$. Using standard regularity arguments, we obtain $u_n\in C^2(\overline{\Omega})$.

\begin{lemma}\label{boundedbelow}
	There exists a constant $C$ such that $\lambda_n\geq C$ for all $n$.
\end{lemma}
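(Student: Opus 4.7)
The plan is to argue by contradiction using the Morse-index bound $m(u_n) \leq 2$ from Theorem \ref{Thm-appro}. Suppose, on the contrary, that $\lambda_n \to -\infty$ along some subsequence. I will exhibit, for all large $n$, a three-dimensional subspace of $H^1(\Omega)$ on which the quadratic form $Q_{\lambda_n,\rho_n}(\,\cdot\,;u_n;\Omega)$ is negative definite, forcing $m(u_n) \geq 3$ and contradicting Theorem \ref{Thm-appro}.

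The first step is to establish a pointwise uniform lower bound $h_\rho'(t) \geq -K$ for all $t \in \R$ and all $\rho \in [\frac{1}{2},1]$, with $K>0$ depending only on $f$. Unpacking $h_\rho = \rho f_1 + f_2$ via the cutoff $\eta$ from Section~2: on the compact interval $|t| \leq R_0+1$ both $|f|$ and $|f'|$ are controlled by $(f_1)$ together with $f \in C^1$, so $h_\rho'$ is uniformly bounded there; for $|t| \geq R_0+1$ the definition of $\eta$ gives $h_\rho'(t) = \rho f'(t)$, and $(f_3)$ ensures $f'(t) \geq \mu|t|^{p-2} \geq 0$ as soon as $|t| \geq M$, while continuity of $f'$ on the residual band $R_0+1 \leq |t| \leq M$ supplies a uniform bound on the remaining piece. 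Consequently $h_{\rho_n}'(u_n(x)) \geq -K$ pointwise in $\Omega$, regardless of how large $u_n$ is.

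To conclude, fix three pairwise disjoint closed balls $\overline{B_1}, \overline{B_2}, \overline{B_3} \subset \Omega$ of a common radius, and let $\varphi_i \in C_c^\infty(B_i)$ be translates of a single cutoff $\varphi \in C_c^\infty(B_r(0))$ normalized by $\int \varphi^2 = 1$; then $\int |\nabla \varphi_i|^2 = C_0$ is a common constant independent of $i$ and $n$. Combining with the previous bound,
\[
Q_{\lambda_n,\rho_n}(\varphi_i; u_n; \Omega) = C_0 + \lambda_n - \int_\Omega h_{\rho_n}'(u_n)\varphi_i^2\, dx \leq C_0 + \lambda_n + K,
\]
so once $\lambda_n < -(C_0 + K)$ one has $Q_{\lambda_n,\rho_n}(\varphi_i; u_n; \Omega) < 0$ for $i = 1, 2, 3$. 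The pairwise disjoint supports kill all cross terms, and for every $(c_1, c_2, c_3) \neq 0$,
\[
Q_{\lambda_n,\rho_n}\Bigl(\sum_{i=1}^3 c_i \varphi_i; u_n; \Omega\Bigr) = \sum_{i=1}^3 c_i^2\, Q_{\lambda_n,\rho_n}(\varphi_i; u_n; \Omega) < 0,
\]
rendering $Q$ negative definite on the three-dimensional subspace $\mathrm{span}\{\varphi_1,\varphi_2,\varphi_3\} \subset H^1(\Omega)$. This contradicts $m(u_n) \leq 2$ and yields the required uniform lower bound on $\lambda_n$. The only delicate point is the uniform pointwise lower bound on $h_\rho'$: the decomposition of $f$ via $\eta$ in Section~2 is designed precisely so that the potentially indefinite piece $f_1$ is active only where $f$ is already super-linear with $f' \geq 0$ for large $|t|$, making $K$ depend on $f$ alone and not on $n$ or on the a priori unbounded function $u_n$.
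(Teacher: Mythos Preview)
Your proof is correct and follows essentially the same approach as the paper: both establish a uniform lower bound $h_\rho'(t)\geq -K$ (the paper does this by splitting into the regions $\{|u_n|\leq M_0\}$ and $\{|u_n|>M_0\}$), and then use $\lambda_n\to -\infty$ to force $Q_{\lambda_n,\rho_n}$ negative definite on a subspace of dimension larger than $2$, contradicting $m(u_n)\leq 2$. Your choice of three disjointly supported test functions is a slight streamlining of the paper's argument, which instead takes an arbitrary $k$-dimensional subspace and implicitly uses equivalence of norms there, but the substance is the same.
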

\begin{proof}
	We assume by contradiction that $\lambda_n\to -\infty$.
	Let $V$ be a subspace of $H^1(\Omega)$ with dimension $k$, where $k>2$.
	Define $\Omega_{M_0}:=\{x\in \Omega: |u_n(x)|\leq M_0\}$, where $M_0=\max\{M, R_0+1\}$ and $R_0$ is given in Section 2.
By assumptions $(f_1)$ and $(f_3)$, there exist constants $C_{M_0}, C'_{M_0}>0$ such that for any $\phi\in H^1(\Omega)$,
\begin{equation*}
	\begin{split}
		\int_\Omega h_{\rho_n}'(u_n)\phi^2 dx &=
		\int_{\Omega_{M_0}} h_{\rho_n}'(u_n)\phi^2 dx+\int_{\Omega\backslash \Omega_{M_0}}h_{\rho_n}'(u_n)\phi^2 dx\\
		&\geq \int_{\Omega_{M_0}} -C_{M_0}\phi^2 dx+ \int_{\Omega\backslash \Omega_{M_0}}h_{\rho_n}'(u_n)\phi^2 dx\\
		&\geq \int_{\Omega_{M_0}} -C_{M_0}\phi^2 dx+ \int_{\Omega\backslash \Omega_{M_0}}\left(\rho_n\mu |u_n|^{p-2}- C_{M_0}'\right)\phi^2 dx.
	\end{split}
\end{equation*}
	Taking $\varphi\in V\setminus\{0\}$, we obtain
	\begin{equation*}\label{jiaoxue}
		\begin{split}
		Q_{\lambda_n,\rho_n}(\varphi;u_n;\Omega)
		&\leq \int_{\Omega}|\nabla \varphi |^2 dx+ \lambda_n\int_{\Omega}\phi ^2 dx  +\int_{\Omega_{M_0}} C_{M_0} \varphi^2 dx\\
		&-\int_{\Omega\backslash \Omega_{M_0}} \left(\mu |u_n|^{p-2}- C_{M_0}'\right)\varphi^2 dx\\
		&\leq \|\varphi\|^2 + \left(\lambda_n+C_{M_0}+ C_{M_0}'-1\right)\int_{\Omega}\varphi^2 dx.
		\end{split}
	\end{equation*}
This implies that $Q_{\lambda_n,\rho_n}(\varphi;u_n;\Omega)$ is negative definite on $V$ for sufficiently large $n$, which contradicts the fact that $m(u_n)\le 2$.
\end{proof}

\begin{lemma}
If $\lambda_n \to +\infty$, then $\|u_n \|_{L^\infty} \to +\infty$.
\end{lemma}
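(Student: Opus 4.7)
My plan is to argue by contradiction: suppose that $\lambda_n \to +\infty$ but $\|u_n\|_{L^\infty(\Omega)}$ stays bounded, say $\|u_n\|_{L^\infty(\Omega)} \leq K$ for some constant $K>0$ and all $n$. The strategy is to test the equation \eqref{withn} with $u_n$ itself and exploit the fact that on the bounded range $[-K,K]$ the nonlinearity $h_{\rho_n}$ is uniformly controlled.

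Concretely, I would take $\varphi = u_n \in H^1(\Omega)$ in the weak formulation of \eqref{withn}, obtaining
\begin{equation*}
\int_\Omega |\nabla u_n|^2\,dx + \lambda_n \int_\Omega |u_n|^2\,dx = \int_\Omega h_{\rho_n}(u_n)\,u_n\,dx.
\end{equation*}
Since $\int_\Omega |u_n|^2\,dx = c$ and the gradient term is nonnegative, this yields the one-sided inequality
\begin{equation*}
\lambda_n c \leq \int_\Omega h_{\rho_n}(u_n)\,u_n\,dx.
\end{equation*}

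Next, I would bound the right-hand side uniformly in $n$. Recall $h_{\rho_n}(t) = \rho_n \eta(t) f(t) + (1-\eta(t))f(t)$ with $\rho_n \in [\tfrac{1}{2},1]$, $\eta \in C^\infty$, and $f \in C^1(\mathbb{R})$ by $(f_1)$. Hence for any fixed $K>0$ the quantity $\sup_{|t|\leq K}|h_{\rho_n}(t)|$ is bounded by a constant $C_K$ independent of $n$. Under the contradiction hypothesis $\|u_n\|_{L^\infty} \leq K$, we therefore get
\begin{equation*}
\int_\Omega h_{\rho_n}(u_n)\,u_n\,dx \leq C_K\, K\, |\Omega| =: M,
\end{equation*}
so $\lambda_n c \leq M$ for all $n$, contradicting $\lambda_n \to +\infty$.

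Since the whole argument reduces to a single integration by parts combined with an elementary bound on $h_{\rho_n}$ on a bounded interval, I do not expect any real obstacle here; the only subtlety worth stating carefully is the uniform-in-$n$ bound on $h_{\rho_n}$ on $[-K,K]$, which follows from $\rho_n$ lying in a compact interval and the smoothness of $f$ and the cut-off $\eta$. This lemma is essentially the observation that the $L^2$-constraint forces the linear term $\lambda_n \|u_n\|_2^2 = \lambda_n c$ to be absorbed by the nonlinearity, so if $\lambda_n$ blows up then $u_n$ must explore the genuinely superlinear regime of $f$, i.e.\ leave every $L^\infty$ ball.
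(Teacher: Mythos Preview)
Your proof is correct and follows essentially the same approach as the paper: both test the equation with $u_n$ to obtain $\lambda_n c \leq \int_\Omega h_{\rho_n}(u_n)u_n\,dx$ and then bound the right-hand side in terms of $\|u_n\|_{L^\infty}$. The only difference is cosmetic: the paper argues directly using the growth estimate $h_{\rho_n}(t)t \leq f(t)t \leq |t|^2 + C_1|t|^p$ from $(f_1)$--$(f_2)$ to extract the quantitative lower bound $\|u_n\|_{L^\infty} \gtrsim (\lambda_n-1)^{1/p}$, whereas you argue by contradiction via a uniform sup bound on $h_{\rho_n}$ over $[-K,K]$.
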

\begin{proof}
By \eqref{withn} and assumptions $(f_1)-(f_2)$,  there exists a constant $C_1>0$ such that
 \begin{equation*}
 	\lambda_nc\leq \int_\Omega f(u_n)u_n \leq c+\int_\Omega C_1 |u_n|^p dx\le c+C_1|\Omega|\|u_n \|_{L^{\infty }}^p.
 \end{equation*}
This implies that
 \begin{equation*}
 	\|u_n \|_{L^{\infty }}\geq \left(\frac{c}{C_1|\Omega |} \right)^{\frac{1}{p}} (\lambda_n-1)^{\frac{1}{p}} \to +\infty.
 \end{equation*}
\end{proof}

In the following, we will analyze the asymptotic behavior of the solution to \eqref{withn} as $\lambda_n \to +\infty$. For simplicity, we may assume without loss of generality that $\max\limits_{x\in\overline{\Omega}}u_n(x)>0$.

We begin by providing a local description of the blow-up points.
\begin{lemma}\label{localbe}
 Suppose that $\lambda_n \to +\infty$.
 Let $P_n\in \overline{\Omega}$ be such that, for some $R_n\to \infty$,
 \begin{equation*}
 	|u_n(P_n)|=\max_{B_{R_n\tilde{\epsilon}_n}(P_n) \cap \overline{\Omega}}|u_n(x)| ~~where~~
 	\tilde{\epsilon}_n=a_0^{-\frac{1}{2}}|u_n(P_n)|^{-\frac{p-2}{2}} \to 0.
 \end{equation*}
 Set $\epsilon_n=\lambda_n^{-\frac{1}{2}}$. Then
  \begin{equation}\label{3.8.1}
 \left(\frac{\tilde{\epsilon}_n}{\epsilon_n} \right)^2\to \tilde{\lambda} \in (0, a_0].
 \end{equation}
  Suppose moreover that
 \begin{equation}\label{inner}
 	\limsup_{n\to +\infty}\frac{dist(P_n, \partial\Omega)}{\tilde{\epsilon}_n}= +\infty.
 \end{equation}
 Then, passing to a subsequence if necessary, we have
 \begin{itemize}
    \item[(i)]  $P_n\to P\in \Omega$;
 	\item[(ii)] 
 	$\frac{dist(P_n, \partial\Omega)}{\epsilon_n}\to +\infty ~~as ~~n\to +\infty,$
 	and the scaled sequence
 	\begin{equation}\label{equivalent}
 		v_n(x):=a_0^{\frac{1}{p-2}} \epsilon_n^{\frac{2}{p-2}}u_n(\epsilon_n x +P_n) ~~{\rm for}~~ x\in \Omega_n:=\frac{\Omega-P_n}{\epsilon_n}
 	\end{equation}
 	converges to some $v\in H^1(\mathbb{R}^N)$ in $C^2_{loc}(\mathbb{R}^N)$, where $v$ satisfies
 \begin{equation}
\left\{\begin{array}{ll}
-\Delta v+ v=|v|^{p-2}v  & {\rm in} \,~ \mathbb{R}^N,\\
|v(0)|=\max\limits_{x\in \mathbb{R}^N} v,  \\
v(x)\to 0  ~~{\rm as}~~|x|\to +\infty;
\end{array}\right.
\end{equation}
\item[(iii)] there exists $\phi_n\in C_0^{\infty}(\Omega)$, with supp$\phi_n \subset B_{R\epsilon_n}(P_n)$ for some $R>0$, such that
	$Q_{\lambda_n,\rho_n}(\phi_n;u_n;\Omega)<0$;
	\item[(iv)]  for all $R>0$ and $q\geq 1$, 
	\[	\lim_{n\to \infty}\lambda_n^{\frac{N-2}{2}-\frac{q}{p-2}}\int_{B_{R\epsilon_n}(P_n)}|u_n|^q dx=\lim_{n\to \infty}\int_{B_{R}(0)}|v_n|^q dy =\int_{B_{R}(0)}|v|^q dy.
	\]
 \end{itemize}
 Instead of \eqref{inner}, we suppose that
  \begin{equation}\label{onboundary}
 	\limsup_{n\to +\infty}\frac{dist(P_n, \partial\Omega)}{\tilde{\epsilon}_n}< +\infty.
 \end{equation}
 Then, passing to a subsequence if necessary, the following results hold:
 \begin{itemize}
	\item[(i)]  $P_n\to P\in \partial\Omega$;
	\item[(ii)]
 	%\begin{equation}\label{pntoinner}
 $		\frac{dist(P_n, \partial\Omega)}{\epsilon_n}\to d_0\geq 0 ~~as ~~n\to +\infty$,
 	and the scaled sequence $\{v_n\}$ defined in (\ref{equivalent}) converges to some $v\in H^1(\mathbb{R}^N_+)$ in $C^2_{loc}(\mathbb{R}^N_+)$ as $n\to \infty$, where $v$ satisfies
 \begin{equation}
\left\{\begin{array}{ll}
-\Delta v+ v=|v|^{p-2}v  & {\rm in} \,~ \mathbb{R}^N_+,\\
\displaystyle\frac{\partial v}{\partial x_N}=0 \, &{\rm on}\,~\partial \mathbb{R}^N_+,\\
|v(0)|=\max\limits_{x\in \mathbb{R}^N_+} v,  \\
v(x)\to 0  ~~{\rm as}~~|x|\to +\infty;
\end{array}\right.
\end{equation}
	\item[(iii)] there exists $\phi_n\in C_0^{\infty}(\Omega)$, with supp$\phi_n \subset B_{R\epsilon_n}(P_n)\cap \overline{\Omega}$ for some $R>0$, such that
	$Q_{\lambda_n, \rho_n}(\phi_n;u_n;\Omega)<0$;
	
	\item[(iv)] for all $R>0$ and $q\geq 1$,
	\begin{equation*}
	\lim_{n\to \infty}\lambda_n^{\frac{N-2}{2}-\frac{q}{p-2}}\int_{B_{R\epsilon_n}(P_n)\cap \Omega}|u_n|^q dx=\lim_{n\to \infty}\int_{B_{R}(0)\cap \Omega_n}|v_n|^q dy =\int_{B_{R}(0)\cap \mathbb{R}^N_+}|v|^q dy.
	\end{equation*}
	
\end{itemize}
\end{lemma}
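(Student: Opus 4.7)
\emph{Rescaling and the rescaled equation.} The basic idea is to rescale $u_n$ near $P_n$ by $\epsilon_n=\lambda_n^{-1/2}$, identify the limit PDE satisfied by the blow-up profile, and extract convergence via local elliptic estimates. A direct computation from \eqref{withn} shows that $v_n$ defined by \eqref{equivalent} satisfies
\begin{equation*}
-\Delta v_n + v_n = a_0^{1/(p-2)} \epsilon_n^{2+2/(p-2)} h_{\rho_n}\!\bigl(a_0^{-1/(p-2)} \epsilon_n^{-2/(p-2)} v_n\bigr)
\end{equation*}
in $\Omega_n$, with Neumann condition on $\partial\Omega_n$. From $(f_2)$ and $\rho_n\to 1^-$, the right-hand side equals $|v_n|^{p-2}v_n(1+o(1))$ pointwise whenever $|v_n|$ stays bounded away from $0$, and $(f_1)$ ensures it is locally bounded in terms of $v_n$. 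This drives the target limit PDE.

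\emph{Proof of \eqref{3.8.1}.} Assume without loss of generality $u_n(P_n)>0$. For the upper bound, if $\dist(P_n,\partial\Omega)$ is large compared with $\tilde\epsilon_n$, then $P_n$ is an interior local maximum and $-\Delta u_n(P_n)\geq 0$; \eqref{withn} together with $(f_2)$ yields $\lambda_n\leq a_0 u_n(P_n)^{p-2}(1+o(1))$, which by the definitions of $\tilde\epsilon_n$ and $\epsilon_n$ gives $(\tilde\epsilon_n/\epsilon_n)^2\leq a_0(1+o(1))$. For $P_n$ close to $\partial\Omega$ one first straightens the boundary locally via a smooth diffeomorphism and uses the Neumann condition to absorb boundary contributions, reducing to the half-space analog of the above. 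For the lower bound $\tilde\lambda>0$, suppose $\tilde\epsilon_n/\epsilon_n\to 0$; then $v_n(0)=(\epsilon_n/\tilde\epsilon_n)^{2/(p-2)}\to\infty$, and an auxiliary rescaling by $v_n(0)$ would produce a bounded positive nontrivial entire solution of $-\Delta w=|w|^{p-2}w$ on $\mathbb{R}^N$ (or $\mathbb{R}^N_+$ with homogeneous Neumann data), contradicting the Gidas--Spruck Liouville theorem in the subcritical range $p<2^*$.

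\emph{Interior and boundary cases.} With $\tilde\lambda>0$, hypothesis \eqref{inner} translates into $\dist(P_n,\partial\Omega)/\epsilon_n\to\infty$, so $\Omega_n$ exhausts $\mathbb{R}^N$ and, up to a subsequence, $P_n\to P\in\Omega$. On $B_{R_n\tilde\epsilon_n/\epsilon_n}(0)\cap\Omega_n$, which expands to fill $\mathbb{R}^N$, the sequence $v_n$ is uniformly bounded by $v_n(0)\to\tilde\lambda^{-1/(p-2)}$, the rescaled right-hand side is locally bounded, and interior Schauder estimates give a subsequence converging in $C^2_{loc}(\mathbb{R}^N)$ to $v$ solving the limit PDE with $|v(0)|=\max v$; decay at infinity follows from standard $H^1$-integrability plus exponential estimates for subcritical NLS. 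Under \eqref{onboundary}, the same argument after straightening $\partial\Omega$ near $P_n$ by a diffeomorphism $\Psi_n$ and extracting $d_0=\lim\dist(P_n,\partial\Omega)/\epsilon_n$ yields $v$ on $\mathbb{R}^N_+$; the perturbation of the principal symbol is $O(\epsilon_n)$ on compact sets and the Neumann condition survives in the limit because the outer unit normal at $P_n$, after composition with $\Psi_n$ and the rescaling, tends to $-e_N$.

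\emph{Items (iii), (iv), and the main obstacle.} For (iii), the limit PDE has linearized quadratic form $Q_\infty(\phi;v)=\int|\nabla\phi|^2+\int\phi^2-(p-1)\int|v|^{p-2}\phi^2$; testing $\phi=v$ gives $Q_\infty(v;v)=(2-p)\int|v|^p<0$. Truncating $v$ by a smooth cutoff to obtain $\phi\in C_0^\infty(\mathbb{R}^N)$ (resp.\ $C_0^\infty(\overline{\mathbb{R}^N_+})$) still making $Q_\infty(\phi;v)<0$, and pulling back via $\phi_n(x):=\phi((x-P_n)/\epsilon_n)$ supported in $B_{R\epsilon_n}(P_n)\cap\overline{\Omega}$, we check using the $C^2_{loc}$ convergence that $Q_{\lambda_n,\rho_n}(\phi_n;u_n;\Omega)<0$ for large $n$. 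Item (iv) follows from the change of variables $y=(x-P_n)/\epsilon_n$: the scaling factor for $|u_n|^q dx$ is exactly $\lambda_n^{q/(p-2)-N/2}$, and $C^2_{loc}$ convergence plus uniform local bounds enable dominated convergence. The main obstacle, in my view, is the boundary case: the straightening diffeomorphism must be chosen so that after rescaling the induced metric coefficients converge to the Euclidean ones in $C^1_{loc}$ and the transformed Neumann condition retains its Euclidean form in the limit, which requires the regularity of $\partial\Omega$ and careful bookkeeping of the outer normal field along $P_n\to P\in\partial\Omega$.
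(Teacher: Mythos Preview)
Your overall rescaling strategy, the derivation of the limit equation, and your treatment of items (iii)--(iv) are along the right lines, but there is a genuine gap: you never invoke the Morse index bound $m(u_n)\leq 2$, which is a standing hypothesis in Section~4 and the key to two steps you gloss over. First, in your proof of $\tilde\lambda>0$ you appeal to the Gidas--Spruck Liouville theorem for $-\Delta w=|w|^{p-2}w$, but that theorem requires the solution to be nonnegative, whereas the $u_n$ here are \emph{not assumed positive} (the paper explicitly allows sign-changing solutions). Your auxiliary rescaling at scale $\tilde\epsilon_n$ yields a limit $\tilde u$ with $|\tilde u|\le|\tilde u(0)|=1$, but nothing rules out $\tilde u$ changing sign. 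The paper instead transfers the bound $m(u_n)\le 2$ to the blow-up limit---this is the long computation involving the decomposition $h_{\rho_n}=\rho_n f_1+f_2$, the cut-off $\eta$, and hypothesis $(f_3)$ with $\mu\ge a_0(p-1)$---and then applies Farina's Liouville theorem for \emph{finite Morse index} solutions of the Lane--Emden equation (together with the half-space analogue of Yu), which works regardless of sign.

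Second, your claim that the limit $v$ decays at infinity ``by $H^1$-integrability'' is unjustified: the convergence $v_n\to v$ is only $C^2_{loc}$, and the $L^2$ constraint on $u_n$ gives $\|v_n\|_{L^2(\Omega_n)}^2 \sim \lambda_n^{N/2-2/(p-2)}\to\infty$ since $p>2+4/N$, so no uniform $H^1$ bound on $v_n$ is available. Here again the paper uses the Morse index: once $m(v)\le 2$ is established, standard results for finite Morse index solutions of $-\Delta v+v=|v|^{p-2}v$ (combined with reflection in the half-space case) force $v(x)\to 0$ as $|x|\to\infty$. In short, the boundary straightening you flag as the main obstacle is only a technical detail; the missing idea is the Morse index transfer, without which neither the Liouville step nor the decay of $v$ can be concluded for possibly sign-changing $u_n$.
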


\begin{proof}
Since $u_n$ may change sign, $P_n$ can be either a positive local maximum or a negative local minimum point.
For simplicity, we focus on the case where $P_n$ is a positive local maximum point; the arguments for the negative local minimum case are analogous.

By (\ref{withn}), we get
\begin{equation*}
		0\leq \frac{-\Delta u_n(P_n)}{u_n(P_n)}= \frac{f_2(u_n(P_n))+\rho_n f_1(u_n(P_n))}{u_n(P_n)}- \lambda_n.
\end{equation*}
Using $(f_2)$ and Lemma \ref{boundedbelow}, we deduce that
\[\frac{\lambda_n}{|u_n(P_n)|^{p-2}}\to \tilde{\lambda}\in [0, a_0] ~{\rm as}~n\to \infty. \]

Next, we show that  $\tilde{\lambda}>0$.

Define the rescaled function
\begin{equation*}
	\tilde{u}_n(x):=a_0^{\frac{1}{p-2}} \tilde{\epsilon}_n^{\frac{2}{p-2}}u_n(\tilde{\epsilon}_nx+ P_n)~~{\rm for}~~ x\in \tilde{\Omega}_n:=\frac{\Omega-P_n}{\tilde{\epsilon}_n}.
\end{equation*}
Clearly, $\tilde{u}_n$ satisfies
\begin{equation}\label{chua}
	\left\{\begin{array}{ll}
		-\Delta \tilde{u}_n +\lambda_n \tilde{\epsilon}_n^2 \tilde{u}_n =a_0^{\frac{1}{p-2}}\tilde{\epsilon}_n^{\frac{2p-2}{p-2}}h_{\rho_n}(a_0^{-\frac{1}{p-2}}\tilde{\epsilon}_n^{-\frac{2}{p-2}}\tilde{u}_n)  & {\rm in} \,~ \tilde{\Omega}_n,\\
	|\tilde{u}_n(x)|\leq |\tilde{u}_n(0)|=1  & {\rm in} \,~ \tilde{\Omega}_n, \\
\displaystyle	\frac{\partial \tilde{u}_n}{\partial \nu}=0 \, &{\rm on}\,~\partial\tilde{\Omega}_n.
	\end{array}\right.
\end{equation}
Let $d_n=dist(P_n, \partial \Omega)$. Then
\begin{equation*}
	\frac{d_n}{\tilde{\epsilon}_n}:= L\in [0, +\infty]
	~~{\rm and}~~\tilde{\Omega}_n\to
	\begin{cases}
	&\mathbb{R}^N ~~{\rm if}~~L=+\infty;\\
	&\mathbb{H}~~~~{\rm if}~~L<+\infty,	
	\end{cases}
\end{equation*}
where $\mathbb{H}$ denotes a half-space such that $0\in \overline{\mathbb{H}}$ and $d(0, \partial \mathbb{H})=L$.
By regularity arguments, up to a subsequence, $\tilde{u}_n\to \tilde{u}$ in $C^2_{loc}(\overline{D})$, where $\tilde{u}$ solves
\begin{equation}\label{tran1}
\left\{\begin{array}{ll}
-\Delta \tilde{u}+ \tilde{\lambda} \tilde{u}=|\tilde{u}|^{p-2}\tilde{u}  & {\rm in} \,~ D,\\
|\tilde{u}(x)|\leq |\tilde{u}(0)|=1  & {\rm in} \,~ D, \\
\displaystyle\frac{\partial \tilde{u}}{\partial \nu}=0 \, &{\rm on}\,~\partial D,
\end{array}\right.
\end{equation}
where $D$ is either $\mathbb{R}^N$ or $\mathbb{H}$.

We claim that $m(\tilde{u})\le 2$. To see this, suppose for contradiction that there exists
$k>2$ such that there are $k$ positive functions $\phi_1, \cdots, \phi_k \in H^1(D)$, orthogonal in $L^2(\Omega)$, satisfying
 \begin{equation}\label{2-20-1}
 \int_D |\nabla \phi_i|^2 dx + \tilde{\lambda}\int_D \phi^2 dx- \int_D (p-1)|\tilde{u}|^{p-2}\phi^2 dx<0
\end{equation}
for every $i\in \{1,\cdots, k\}$.

Define the rescaled functions
\[\phi_{i,n}(x):= \tilde{\epsilon}_n^{-\frac{N-2}{2}}\phi_i \left(\frac{x-P_n}{\tilde{\epsilon}_n}\right).\]
Additionally, let
\[\tilde{\Omega}_{n, M_0}:= \{x\in \tilde{\Omega}_n: |a_0^{-\frac{1}{p-2}}\tilde{\epsilon}_n^{-\frac{2}{p-2}}\tilde{u}_n|\leq M_0\}, ~~\tilde{\Omega}_{n, M_0}^c:= \tilde{\Omega}_n\backslash \tilde{\Omega}_{n, M_0},\] where $M_0$ is as given in Lemma \ref{boundedbelow}.

By direct computations, there exist constants $\tilde{C}_{M_0}, \tilde{C}'_{M_0}>0$ such that the following estimates hold:
\begin{align*}
			\int_{\Omega} f'_1(u_n) \phi_{i, n}^2 dx &= \int_{\tilde{\Omega}_n} \tilde{\epsilon}^2_n \phi_i^2 f_1'(a_0^{-\frac{1}{p-2}}\tilde{\epsilon}_n^{-\frac{2}{p-2}} \tilde{u}_n) dx\\
		&=\int_{\tilde{\Omega}_{n, M_0+1}} [\tilde{\epsilon}^2_n\phi_i^2 f'(a_0^{-\frac{1}{p-2}}\tilde{\epsilon}_n^{-\frac{2}{p-2}}\tilde{u}_n)\eta(a_0^{-\frac{1}{p-2}}\tilde{\epsilon}_n^{-\frac{2}{p-2}}\tilde{u}_n)\\
		&+ \tilde{\epsilon}^2_n \phi_i^2 f(a_0^{-\frac{1}{p-2}}\tilde{\epsilon}_n^{-\frac{2}{p-2}}\tilde{u}_n)\eta'(a_0^{-\frac{1}{p-2}}\tilde{\epsilon}_n^{-\frac{2}{p-2}}\tilde{u}_n) ]dx  \\
		&+\int_{\tilde{\Omega}_{n, M_0}^c} \tilde{\epsilon}^2_n \phi_i^2 f'(a_0^{-\frac{1}{p-2}}\tilde{\epsilon}_n^{-\frac{2}{p-2}}\tilde{u}_n)\eta(a_0^{-\frac{1}{p-2}}\tilde{\epsilon}_n^{-\frac{2}{p-2}}\tilde{u}_n) dx\\
		&\geq \int_{\tilde{\Omega}_{n, M_0} \cap supp\{\phi_i\}} -\tilde{C}_{M_0} \tilde{\epsilon}^2_n \phi_i^2 dx\\
		&+ \int_{\tilde{\Omega}_{n, M_0}^c} \tilde{\epsilon}^2_n \phi_i^2\mu |a_0^{-\frac{1}{p-2}}\tilde{\epsilon}_n^{-\frac{2}{p-2}}\tilde{u}_n|^{p-2} dx\\
		&= \int_{\tilde{\Omega}_{n, M_0} \cap supp\{\phi_i\}} -\tilde{C}_{M_0} \tilde{\epsilon}^2_n \phi_i^2 dx + \int_{\tilde{\Omega}_n} \tilde{\epsilon}^2_n \phi_i^2\mu |a_0^{-\frac{1}{p-2}}\tilde{\epsilon}_n^{-\frac{2}{p-2}}\tilde{u}_n|^{p-2} dx\\
		&-\int_{\tilde{\Omega}_{n, M_0}} \tilde{\epsilon}^2_n \phi_i^2\mu |a_0^{-\frac{1}{p-2}}\tilde{\epsilon}_n^{-\frac{2}{p-2}}\tilde{u}_n|^{p-2} dx
\end{align*}
and
\begin{align*}
	\int_{\Omega} f'_2(u_n) \phi_{i, n}^2 dx &= \int_{\tilde{\Omega}_n} \tilde{\epsilon}^2_n\phi_i^2 f_2'(a_0^{-\frac{1}{p-2}}\tilde{\epsilon}_n^{-\frac{2}{p-2}}\tilde{u}_n) dx\\	&=\int_{\tilde{\Omega}_{n, M_0}\cap supp\{\phi_i\}} [\tilde{\epsilon}^2_n\phi_i^2 f'(a_0^{-\frac{1}{p-2}}\tilde{\epsilon}_n^{-\frac{2}{p-2}}\tilde{u}_n)(1-\eta(a_0^{-\frac{1}{p-2}}\tilde{\epsilon}_n^{-\frac{2}{p-2}}\tilde{u}_n))\\
		&- \tilde{\epsilon}^2_n\phi_i^2 f(a_0^{-\frac{1}{p-2}}\tilde{\epsilon}_n^{-\frac{2}{p-2}}\tilde{u}_n)\eta'(a_0^{-\frac{1}{p-2}} \tilde{\epsilon}_n^{-\frac{2}{p-2}}\tilde{u}_n)]dx    \\
		&\geq \int_{\tilde{\Omega}_{n, M_0} \cap supp\{\phi_i\}} -\tilde{C}'_{M_0} \tilde{\epsilon}^2_n \phi_i^2 dx.
\end{align*}
Note that $a_0^{-\frac{1}{p-2}}\tilde{u}_n\leq \tilde{\epsilon}_n^{\frac{2}{p-2}} M_0$ for $x\in \tilde{\Omega}_{n, M_0}$, by (\ref{2-20-1}) it follows that
\begin{equation*}
\begin{split}
Q_{\lambda_n,\rho_n}(\phi_{i,n}; u_n; \Omega)&=	\int_{\Omega}|\nabla\phi_{i,n} |^2 dx +\lambda_n \int_{\Omega} \phi_{i,n}^2 dx -\int_{\Omega}f_2'(u_n)\phi_{i,n}^2 dx - \rho_n\int_{\Omega}f_1'(u_n)\phi_{i,n}^2 dx\\
 %&=\int_{\tilde{\Omega}_n}|\phi_{i} |^2 dx +\lambda_n \tilde{\epsilon}_n^2\int_{\tilde{\Omega}_n} \phi_{i}^2 dx\\
 %&-\int_{\tilde{\Omega}_n} \tilde{\epsilon}^2_n f'_2(a_0^{-\frac{1}{p-2}}\tilde{\epsilon}_n^{-\frac{2}{p-2}}\tilde{u}_n)\phi_{i}^2 dx - \rho_n\int_{\tilde{\Omega}_n} \tilde{\epsilon}^2_n f'_1(a_0^{-\frac{1}{p-2}}\tilde{\epsilon}_n^{-\frac{2}{p-2}}\tilde{u}_n)\phi_{i}^2 dx\\
 &\leq \int_{\tilde{\Omega}_n}|\nabla \phi_{i} |^2 dx +\lambda_n \tilde{\epsilon}_n^2\int_{\tilde{\Omega}_n} \phi_{i}^2 dx+ \int_{\tilde{\Omega}_{n, M_0} \cap supp\{\phi_i\}} (\rho_n \tilde{C}_{M_0}+ \tilde{C}'_{M_0}) \tilde{\epsilon}^2_n \phi_i^2 dx\\
 &-\rho_n\int_{\tilde{\Omega}_n} \tilde{\epsilon}^2_n \phi_i^2\mu |a_0^{-\frac{1}{p-2}}\tilde{\epsilon}_n^{-\frac{2}{p-2}}\tilde{u}_n|^{p-2} dx+ \rho_n\int_{\tilde{\Omega}_{n, M_0}} \phi_i^2\mu a_0^{-1}|\tilde{u}_n|^{p-2} dx \\
 &\to \int_D |\nabla \phi_i|^2 dx + \tilde{\lambda}\int_D \phi^2 dx- \int_D a_0^{-1}\mu|\tilde{u}|^{p-2}\phi^2 dx\\
 &\leq \int_D |\nabla \phi_i|^2 dx + \tilde{\lambda}\int_D \phi^2 dx- \int_D (p-1)|\tilde{u}|^{p-2}\phi^2 dx<0.
\end{split}
\end{equation*}	
This implies that $m(u_n)\geq k>2$ for sufficiently large $n$, thereby yielding a contradiction. Thus, the claim is valid.

Having established that $\tilde u$ is a finite Morse nontrivial solution of (\ref{tran1}), we can apply either \cite[Theorem 2]{Farina} or \cite[Proposition 2.1]{Yu}. This allows us to conclude that the occurrence of $\tilde{\lambda}=0$ is ruled out, regardless of whether $D$ is a half-space or a whole space.
Consequently, we can assert that $\tilde{\lambda}\in (0, a_0]$. 	

In the sequel, we consider the sequence $\{v_n\}$ defined by (\ref{equivalent}).
Clearly, $v_n$ satisfies
\begin{equation}
\left\{\begin{array}{ll}
-\Delta v_n+ v_n=a_0^{\frac{1}{p-2}}\epsilon_n^{\frac{2p-2}{p-2}}h_{\rho_n}(a_0^{-\frac{1}{p-2}}\epsilon_n^{-\frac{2}{p-2}} v_n)  & {\rm in} \,~ \Omega_n,\\
|v_n(x)|\leq |v_n(0)|=(\frac{\epsilon_n}{\tilde{\epsilon}_n})^{\frac{2}{p-2}}\to \tilde{\lambda}^{-\frac{1}{p-2}}  & {\rm in} \,~ \Omega_n, \\
\displaystyle\frac{\partial v_n}{\partial \nu}=0 \, &{\rm on}\,~\partial \Omega_n.
\end{array}\right.
\end{equation}
Up to a subsequence, we have $v_n\to v$ in $C^2_{loc}(\overline{D})$, where $D$ is either $\mathbb{R}^N$ or a half space $\mathbb{H}$, and $v$ solves
\begin{equation}
\left\{\begin{array}{ll}
-\Delta v+ v=|v|^{p-2}v  & {\rm in} \,~ D,\\
|v(x)|\leq |v(0)|= \tilde{\lambda}^{-\frac{1}{p-2}}  & {\rm in} \,~ D, \\
\displaystyle\frac{\partial v}{\partial \nu}=0 \, &{\rm on}\,~\partial D.
\end{array}\right.
\end{equation}
Arguing as above implies that $m(v)\leq 2$.

More precisely, we distinguish the following three cases:

\begin{itemize}
	\item[(1)]If $\limsup\limits_{n\to +\infty}\frac{dist(P_n, \partial\Omega)}{\tilde{\epsilon}_n}= +\infty$, then $D=\mathbb{R}^N$. Using $m(v)\le2$, by standard regularity arguments, $v\in C^2(\mathbb{R}^N)$, $|v(x)|\to0$ as $|x|\to+\infty$. If $v\ge0$, then using the strong maximum principle and \cite[Theorem 1.1]{EspPet} we know that $v>0, m(v)=1$, and it coincides with the unique radial ground-state solution $U_0$ to $-\Delta u+u=u^{p-1}$ in $\mathbb{R}^N$. If $v$ is sign-changing, then $m(v)=2$  and $v$ admits exactly one local maximum point and one local minimum point in $\Omega$.
 \item[(2)] If there exists a constant $a>0$ such that
    \begin{eqnarray*}
    0<a\leq\limsup_{n\to +\infty}\frac{dist(P_n, \partial\Omega)}{\tilde{\epsilon}_n}< +\infty,
     \end{eqnarray*}
we may assume that $\lim\limits_{n\to +\infty} \frac{dist(P_n, \partial\Omega)}{\tilde{\epsilon}_n}= d>0$. Then, up to subsequences, $v_n\to v $ in $C^2_{loc}(\{x_N>-d\})$, and $v$ weakly solves
\begin{equation*}
        \left\{\begin{array}{ll}
           -\Delta v+ v=|v|^{p-2}v  & {\rm in} \,~ \{x_N>-d\},\\
           |v(x)|\leq |v(0)|= \tilde{\lambda}^{-\frac{1}{p-2}}  & {\rm in} \,~ \{x_N>-d\}, \\
         \displaystyle  \frac{\partial v}{\partial x_N}=0 \, &{\rm on}\,~\{x_N=-d\}.
        \end{array}\right.
        \end{equation*}
        
   Let $\tilde{v}(x)=v(x',x_N-d)$, where $x'=(x_1,x_2,\cdots,x_{N-1})$.
       It is straightforward to verify that $\tilde{v}$ is a finite Morse index solution to the following system:
        \begin{equation*}
        \left\{\begin{array}{ll}
           -\Delta \tilde{v}+ \tilde{v}=|\tilde{v}|^{p-2}\tilde{v}  & {\rm in} \,~ \{x_N>0\},\\
           |\tilde{v}(x)|\leq |\tilde{v}(0,\cdots,0,d)|= \tilde{\lambda}^{-\frac{1}{p-2}}  & {\rm in} \,~ \{x_N>0\}, \\
        \displaystyle   \frac{\partial \tilde{v}}{\partial x_N}=0 \, &{\rm on}\,~ \{x_N=0\}.
        \end{array}\right.
        \end{equation*}
      Next, we extend $\tilde{v}$ by reflection with respect to $\{x_N= 0\}$. Specifically, 
      for $(x',x_N)\in \mathbb{R}^N$, we define
        \begin{equation*}
        	\hat{v}(x',x_N):=
        	\begin{cases}
        		&\tilde{v}(x',x_N) ~~~~~~~~{\rm if}~x_N\geq 0,\\
        		&\tilde{v}(x',-x_N)~~~~~~{\rm if}~x_N<0.
        	\end{cases}
        \end{equation*}
       Consequently, $\hat{v}$ satisfies the equation
        \begin{equation*}
              \left\{\begin{array}{ll}
              -\Delta \hat{v}+ \hat{v}=|\hat{v}|^{p-2}\hat{v}  & {\rm in} \,~ \mathbb{R}^N,\\
              |\hat{v}(x)|\leq |\hat{v}(0,\cdots,0,-d)|= |\hat{v}(0,\cdots,0,d)|=\tilde{\lambda}^{-\frac{1}{p-2}}  & {\rm in} \,~ \mathbb{R}^N, \\
                 \displaystyle    \frac{\partial \tilde{v}}{\partial x_N}=0 \, &{\rm on}\,~ \{x_N=0\}.
              \end{array}\right.
            \end{equation*}
   Thus, $\hat{v}$ is a bounded function that solves the equation $-\Delta \hat{v}+ \hat{v}=|\hat{v}|^{p-2}\hat{v}$ in $\mathbb{R}^N$ in the weak sense. By applying Schauder interior and boundary estimates (see \cite{GT-book}), we conclude that $\hat{v}\in C^{2,\alpha}_{loc}({\mathbb{R}^N})$ for $\alpha\in (0,1)$, and the regularity extends up to the hyperplane $\{x_N=0\}$. Since $\hat{v}$ is symmetric across $\{x_N=0\}$, it follows that the second derivatives of $\hat{v}$ are continuous everywhere, including at the boundary $\{x_N=0\}$. Consequently, by applying elliptic regularity theory and considering that $\hat{v}$ is a solution with finite Morse index, we can show that
$\hat{v}\in C^2(\mathbb{R}^N)$ and $|\hat{v}(x)|\to0$ as $|x|\to+\infty$.
Therefore, $\tilde{v}$ must be a sign-changing solution, a situation that can occur according to \cite[Theorem 1]{McLeod}. In fact, if $\hat{v}$ is a positive solution, then by applying \cite[Theorem 1.1]{EspPet} once again, we deduce that $\hat{v}$ coincides with the unique radial ground-state solution $U_0$ to $-\Delta u+u=u^{p-1}$ in $\mathbb{R}^N$. It is well-known that $U_0$ has a unique global maximum and is a radially strictly decreasing function. This would contradict the fact that $\max\limits_{x\in\mathbb{R}^N} \hat{v}=|\hat{v}(0,\cdots,0,-d)|= |\hat{v}(0,\cdots,0,d)|$.

       \item[(3)] If $\limsup\limits_{n\to +\infty}\frac{dist(P_n, \partial\Omega)}{\tilde{\epsilon}_n}= 0$, then, up to a subsequence, $\Omega_n\to \mathbb{R}^N_+(\{x_N>0\})$.
 By similar arguments as above, $v_n\to v$ in $C^2_{loc}(\mathbb{R}^N_+)$, where $v$ is a finite Morse index of the following equation
       \begin{equation*}
        \left\{\begin{array}{ll}
           -\Delta v+ v=|v|^{p-2}v  & {\rm in} \,~ \{x_N>0\},\\
          \displaystyle \frac{\partial v}{\partial x_N}=0 \, &{\rm on}\,~\{x_N=0\},\\
           |v(y)|\leq |v(0)|= \tilde{\lambda}^{-\frac{1}{p-2}}.
        \end{array}\right.
        \end{equation*}
     Extending $v$ by reflection with respect to $\{x_N=0\}$ and
        defining
        \begin{equation*}
        	\hat{v}(x',x_N):=
        	\begin{cases}
        		v(x',x_N)& ~{\rm if}~~x_N\geq 0,\\
        		v(x',-x_N)&~{\rm if}~~x_N<0.
        	\end{cases}
        \end{equation*}
        Then $\hat{v}$ satisfies
            \begin{equation*}
              \left\{\begin{array}{ll}
              -\Delta \hat{v}+ \hat{v}=|\hat{v}|^{p-2}\hat{v}  & {\rm in} \,~ \mathbb{R}^N,\\
              |\hat{v}(x)|\leq |\hat{v}(0)|  & {\rm in} \, \mathbb{R}^N, ~\\
               \displaystyle  \frac{\partial v}{\partial x_N}=0 \, &{\rm on}\,~\{x_N=0\}.
              \end{array}\right.
            \end{equation*}
Using arguments analogous to those in case (1), we deduce that if $\hat{v}$ is positive, then $m(\hat{v})=1$, and thus $\hat{v}$ coincides with $U_0$. Using \cite[Theorem 1]{McLeod}, it is established that $\hat{v}$ may exhibit sign-changing behavior, and in such instance, $m(v)=2$.
\end{itemize}

    All in all, we get that (ii) holds. Employing similar reasoning to \cite[Theorem 3.1]{EspPet}, we conclude that (iii) and (iv) are also satisfied.
\end{proof}

We now proceed to provide a comprehensive global blow-up analysis. The following result offers a detailed global description of the asymptotic behavior of 
$\{u_n\}$ as $\lambda_n\to+\infty$.
\begin{lemma}\label{globalbe}
There exists $k\in \{1,2\}$ and sequences of points $\{P_n^1\},\cdots,\{P_n^k\}$, such that
\begin{equation}\label{localmaxi}
		|u_n(P_n^i)|= \max_{B_{R_n\lambda_n^{-1/2}}(P_n^i)\cap \overline{\Omega}}|u_n|~for ~some~R_n\to \infty, ~for ~every~i,
	\end{equation}
	\begin{equation}\label{faraway1}
		\lambda_n |P_n^i-P_n^j|^2\to +\infty, ~~\forall i\neq j,~~\mbox{as}~~n\to \infty,
	\end{equation}
and moreover,
    \begin{equation}\label{decay-1}
    	\lim_{R\to \infty}\left(\limsup_{n\to +\infty} \lambda_n^{-\frac{1}{p-2}} \max_{d_n(x)\geq R\lambda_n^{-1/2}}|u_n(x)| \right)=0,
    \end{equation}
 where $d_n(x)=\min\{|x-P_n^i|:i=1,\cdots,k\}$ is the distance function from $\{P_n^1,\cdots, P_n^k\}$ for $x\in \overline{\Omega}$.
\end{lemma}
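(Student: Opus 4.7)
The plan is to construct the points \(\{P_n^i\}\) iteratively, exploiting the Morse index cap \(m(u_n)\le 2\) supplied by Theorem \ref{Thm-appro} to force the iteration to terminate at \(k\le 2\).

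I would choose \(P_n^1\in\overline{\Omega}\) as a global extremum of \(|u_n|\). Since \(\|u_n\|_{L^\infty}\to+\infty\) by the lemma preceding Lemma \ref{localbe}, the hypotheses of Lemma \ref{localbe} are met at \(P_n^1\) and (\ref{localmaxi}) holds trivially with any \(R_n\to+\infty\). Proceeding inductively, suppose \(P_n^1,\ldots,P_n^k\) have been selected verifying (\ref{localmaxi}) and (\ref{faraway1}), and set \(d_n^{(k)}(x):=\min_{1\le i\le k}|x-P_n^i|\). If the decay conclusion (\ref{decay-1}) holds for this \(k\), the iteration terminates. Otherwise there exist \(\delta>0\), \(R_n\to+\infty\) and points \(Q_n\in\overline{\Omega}\) with \(d_n^{(k)}(Q_n)\ge R_n\lambda_n^{-1/2}\) and \(\lambda_n^{-\frac{1}{p-2}}|u_n(Q_n)|\ge\delta\). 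I would then define \(P_n^{k+1}\) as a local extremum of \(|u_n|\) close to \(Q_n\) (for instance a maximizer of \(|u_n|\) over \(B_{R_n\lambda_n^{-1/2}/2}(Q_n)\cap\overline{\Omega}\)); by (\ref{3.8.1}) its rescaling scale \(\tilde\epsilon_n^{(k+1)}\) is comparable to \(\lambda_n^{-1/2}\), so Lemma \ref{localbe} applies, (\ref{localmaxi}) is obtained on a radius \(R'_n\lambda_n^{-1/2}\) with \(R'_n\to+\infty\), and (\ref{faraway1}) follows from \(|P_n^{k+1}-P_n^i|\ge R_n\lambda_n^{-1/2}/2\).

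The crux is that the iteration cannot go beyond \(k=2\). By Lemma \ref{localbe}(iii), each selected point \(P_n^i\) supplies \(\phi_n^i\in C_0^\infty(\Omega)\) with \(\supp\phi_n^i\subset B_{R\epsilon_n}(P_n^i)\cap\overline{\Omega}\) and \(Q_{\lambda_n,\rho_n}(\phi_n^i;u_n;\Omega)<0\); the separation (\ref{faraway1}) makes these supports pairwise disjoint for \(n\) large. For any nontrivial combination \(\phi=\sum_{i=1}^k a_i\phi_n^i\) the quadratic form splits as
\[
Q_{\lambda_n,\rho_n}(\phi;u_n;\Omega)=\sum_{i=1}^k a_i^2\,Q_{\lambda_n,\rho_n}(\phi_n^i;u_n;\Omega)<0,
\]
so \(u_n\) has Morse index at least \(k\); this would contradict \(m(u_n)\le 2\) once \(k\ge 3\). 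Hence \(k\in\{1,2\}\) and (\ref{decay-1}) must hold.

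The main obstacle I foresee is the inductive choice of \(P_n^{k+1}\): one has to guarantee that the new point is a genuine local extremum on a ball of radius \(R'_n\lambda_n^{-1/2}\) with \(R'_n\to+\infty\) centered at \(P_n^{k+1}\) itself, rather than merely an absolute maximizer over some auxiliary set, so that hypothesis (\ref{localmaxi}) is truly verified and the local blow-up profile at \(P_n^{k+1}\) (in particular Lemma \ref{localbe}(iii)) is available. This is a matter of balancing the divergence rate of \(R_n\) against the rescaling parameter \(\tilde\epsilon_n^{(k+1)}\sim\lambda_n^{-1/2}\) furnished by (\ref{3.8.1}), and of arguing that inside \(B_{R_n\lambda_n^{-1/2}/2}(Q_n)\) the interior maximum cannot escape to the boundary of this ball, thereby yielding a true local extremum compatible with the blow-up framework.
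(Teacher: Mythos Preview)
Your overall strategy—iteratively extracting blow-up centres and terminating via the Morse index bound $m(u_n)\le 2$ combined with the disjointly supported negative directions from Lemma~\ref{localbe}(iii)—matches the paper's. The gap you flag at the end, however, is genuine and is not closed by the argument you sketch. Defining $P_n^{k+1}$ as the maximiser of $|u_n|$ over $B_{R_n\lambda_n^{-1/2}/2}(Q_n)\cap\overline\Omega$ makes (\ref{faraway1}) automatic, but nothing prevents $P_n^{k+1}$ from lying on $\partial B_{R_n\lambda_n^{-1/2}/2}(Q_n)$, and then (\ref{localmaxi}) fails outright; there is no mechanism forcing the maximum into the interior of your auxiliary ball. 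Your appeal to (\ref{3.8.1}) is also premature: that relation is a \emph{conclusion} of Lemma~\ref{localbe}, which already presupposes the local-maximum hypothesis you are trying to verify (only the one-sided bound $\tilde\epsilon_n^{(k+1)}\le C\lambda_n^{-1/2}$ comes for free from $|u_n(P_n^{k+1})|\ge\delta\lambda_n^{1/(p-2)}$).

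The paper avoids this by a different ordering. It defines $P_n^2$ as the maximiser of $|u_n|$ over the \emph{complement} $\overline\Omega\setminus B_{R\lambda_n^{-1/2}}(P_n^1)$ for a \emph{fixed} large $R$, then proves the separation $\lambda_n^{1/2}|P_n^1-P_n^2|\to\infty$ by contradiction: were it bounded, the convergence $v_{n,1}\to v$ in $C^2_{loc}$ together with the decay of $v$ at infinity would force $\lambda_n^{-1/(p-2)}|u_n(P_n^2)|$ below the threshold $\delta$. Only \emph{after} separation is established does one observe that $B_{\tilde R_{n,2}\tilde\epsilon_{n,2}}(P_n^2)\cap\overline\Omega$, with $\tilde R_{n,2}:=|P_n^1-P_n^2|/(2\tilde\epsilon_{n,2})\to\infty$, lies entirely inside the complement set, so $P_n^2$ is automatically the maximum there and (\ref{localmaxi}) follows. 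This sequence—max over complement, then separation by contradiction, then local max—is exactly the missing idea in your construction.
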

\begin{proof}
 Take $P_n^1\in \overline{\Omega}$ such that $u_n(P_n^1)=\max\limits_{\overline{\Omega}}|u_n(x)|$. If (\ref{decay-1}) is satisfied for $P_n^1$, then we get $k=1$. It is evident that $P_n^1$ satisfies (\ref{localmaxi}).

Otherwise, if $P_n^1$ does not satisfy (\ref{decay-1}), we suppose that there exists $\delta>0$ such that
    \begin{equation}
    	\lim_{R\to \infty}\left(\limsup_{n\to +\infty} \lambda_n^{-\frac{1}{p-2}} \max_{|x-P_n^1|\geq R\lambda_n^{-1/2}}|u_n(x)| \right)\geq 4\delta.
    \end{equation}
For sufficiently large $R$, up to a subsequence, it holds
\begin{equation}\label{pn2}
	 \lambda_n^{-\frac{1}{p-2}} \max_{|x-P_n^1|\geq R\lambda_n^{-1/2}}|u_n(x)|\geq 2 \delta.
\end{equation}
Let $P_n^2\in \overline{\Omega}\backslash B_{R\lambda_n^{-1/2}}(P_n^1)$ such that
 \begin{equation*}
 	|u_n(P_n^2)|=\max_{\overline{\Omega}\backslash B_{R\lambda_n^{-1/2}}(P_n^1)}|u_n|.
 \end{equation*}
Then, \eqref{pn2} yields that $|u_n(P_n^2)|\to +\infty$ as $n\to +\infty$.

We claim that
 \begin{equation}\label{faraway}
 	\lambda_n|P_n^1-P_n^2|^2\to +\infty.
 \end{equation}
 If \eqref{faraway} is not true, then up to subsequence
 $\lambda_n^{\frac{1}{2}}|P_n^1-P_n^2|\to R'\geq R$.
Define
 \begin{equation*}
 v_{n,1}(x):=a_0^{\frac{1}{p-2}}\lambda_n^{-\frac{1}{p-2}}u_n(\lambda_n^{-1/2}x + P_n^1).
 \end{equation*}
  As in Lemma \ref{localbe}, we can deduce that $ v_{n,1}\to v$ in $C^2_{loc}(\overline{D})$, where $D=\mathbb{R}^N$ or $D$ is a half-space.
 Then, up to subsequences,
 \begin{equation*}
 	\lambda_n^{-\frac{1}{p-2}}u_n(P_n^2)=v_{n,1}(\lambda^{1/2}(P_n^2-P_n^1)) \to v(x'),~|x'|\geq R'>R.
 \end{equation*}
  Since $v(x)\to 0$ as $|x|\to \infty$, taking $R$ larger if necessary, it follows that
  \begin{equation*}
  	|v(x)|\leq a_0^{\frac{1}{p-2}}\delta, ~\forall |x|\geq R.
  \end{equation*}
  This contradicts to \eqref{pn2}, which proves the claim \eqref{faraway}.

In the following, we shall show that
  \begin{equation}\label{yang}
  	|u_n(P_n^2)|=\max_{\overline{\Omega}\cap B_{R_{n,2}\lambda_n^{-1/2}}(P_n^2)}|u_n(x)|, ~{\rm for~some}~ R_{n,2}\to +\infty.
  \end{equation}
Let $\tilde{\epsilon}_{n,2}=a_0^{-\frac{1}{2}}|u_n(P_n^2)|^{-\frac{p-2}{2}}$. Clearly, $\tilde{\epsilon}_{n,2}\to 0$.
 By \eqref{pn2} we get $\tilde{\epsilon}_{n,2}\leq (2\delta)^{-\frac{p-2}{2}}\lambda_n^{-\frac{1}{2}}$.
From \eqref{faraway}, we can assert that
 \begin{equation*}
 	\tilde{R}_{n,2}:=\frac{|P_n^1-P_n^2|}{2\tilde{\epsilon}_{n,2}}\geq \frac{(2\delta)^{\frac{p-2}{2}}}{2}\lambda_n^{\frac{1}{2}}|P_n^1-P_n^2| \to +\infty~{\rm as }~n\to\infty.
 \end{equation*}
On the other hand, for any $x\in B_{\tilde{R}_{n,2}\tilde{\epsilon}_{n,2}}(P_n^2)$ and $R>0$, we have
 \begin{equation*}
 |x-P_n^1|\geq |P_n^2-P_n^1|- |x-P_n^2|\geq \frac{1}{2}|P_n^2-P_n^1|\geq R\lambda_n^{-\frac{1}{2}}
 \end{equation*}
for arbitrarily large $n$. Consequently,
 \begin{equation*}
 	\overline{\Omega}\cap B_{\tilde{R}_{n,2}\tilde{\epsilon}_{n,2}}(P_n^2)\subset \overline{\Omega}\backslash B_{R\lambda_n^{-\frac{1}{2}}}(P_n^1),
 \end{equation*}
 which implies that
 \begin{equation}\label{p2maximum}
 	|u_n(P_n^2)|=\max_{\overline{\Omega}\cap B_{\tilde{R}_{n,2}\tilde{\epsilon}_{n,2}}(P_n^2)}|u_n|.
 \end{equation}

 We define
  \begin{equation*}
\tilde{u}_{n,2}:=a_0^{\frac{1}{p-2}}\tilde{\epsilon}_{n,2}^{\frac{2}{p-2}}u_n(\tilde{\epsilon}_{n,2} x+P_n^2),~~~ x\in \tilde{\Omega}_{n,2}:=\frac{\Omega -P_n^2}{\tilde{\epsilon}_{n,2}}.
 \end{equation*}
Set $d_{n,2}={\rm dist} (P_n^2, \partial \Omega)$. Then, passing to subsequences if necessary,
 \begin{equation*}
 	\frac{\tilde{\epsilon}_{n,2}}{d_{n,2}}\to L_2\in [0,+\infty]~{\rm and}~\tilde{\Omega}_{n,2}\to
 	\begin{cases}
	&\mathbb{R}^N ~~{\rm if}~~L_2=+\infty,\\
	&\mathbb{H}~~{\rm if}~~L_2<+\infty.	
	\end{cases}
 \end{equation*}
 Then $\tilde{u}_{n,2}$ satisfies the following equation
 \begin{equation*}
\left\{\begin{array}{ll}
-\Delta \tilde{u}_{n,2}+ \lambda_n \tilde{\epsilon}_{n,2}^2 \tilde{u}_{n,2}=a_0^{\frac{1}{p-2}}\tilde{\epsilon}_{n,2}^{\frac{2p-2}{p-2}}h_{\rho_n}(a_0^{-\frac{1}{p-2}}\tilde{\epsilon}_{n,2}^{-\frac{2}{p-2}}\tilde{u}_{n,2}) & {\rm in} \,~ \tilde{\Omega}_{n,2},\\
|\tilde{u}_{n,2}(x)|\leq |\tilde{u}_{n,2}(0)|=1  & {\rm in} \,~ \tilde{\Omega}_{n,2}, \\
\displaystyle\frac{\partial \tilde{u}_{n,2}}{\partial \nu}=0 \, &{\rm on}\,~\partial\tilde{\Omega}_{n,2}.
\end{array}\right.
\end{equation*}
 Since $P_n^2$ is a local maximum or a local minimum point, 
 %we have
 %\[\frac{\lambda_n}{|u_n(P^2_n)|^{p-2}}\leq a_0+ \frac{1}{|u_n(P^2_n)|^{p-2}u_n(P^2_n)}. \]
by
 Lemma \ref{boundedbelow} we get
\[\frac{\lambda_n}{|u_n(P^2_n)|^{p-2}}\to \tilde{\lambda}^{(2)}\in [0, a_0],~{\rm as}~n\to \infty. \]
 Using the similar argument as in Lemma \ref{localbe}, we deduce $\tilde{\lambda}^{(2)}>0$, namely
 \begin{equation*}
 	\lim_{n\to +\infty}\lambda_n^{\frac{1}{2}}\tilde{\epsilon}_{n,2}>0.
 \end{equation*}
Set
 \begin{equation*}
 	v_{n,2}:=a_0^{\frac{1}{p-2}}\epsilon_n^{\frac{2}{p-2}} u_n(\epsilon_n x+P_n^2)~{\rm for }~x\in \Omega_{n,2}:=\frac{\Omega-P_n^2}{\epsilon_n},
 \end{equation*}
 where $\epsilon_n=\lambda_n^{-\frac{1}{2}}$ is defined in \eqref{equivalent}.
 Up to a subsequence, there exists a function $v^{(2)}\in H^1(D)$ such that $v_{n,2}\to v^{(2)}$ in $C_{loc}^2(\overline{D})$, where $D$ is either $\mathbb{R}^N$ or a half space. Moreover, $v^{(2)}$ solves the following problem
 \begin{equation}
\left\{\begin{array}{ll}
-\Delta v^{(2)}+ v^{(2)}=|v^{(2)}|^{p-2}v^{(2)}  & {\rm in} \,~ D,\\
|v^{(2)}(x)|\leq |v^{(2)}(0)|= (\tilde{\lambda}^{(2)})^{-\frac{1}{p-2}}  & {\rm in} \,~ D, \\
\displaystyle\frac{\partial v^{(2)}}{\partial \nu}=0 \, &{\rm on}\,~\partial D.
\end{array}\right.
\end{equation}
Then, by a similar discussion as above, we conclude that either $\frac{d_{n,2}}{\epsilon_n}\to +\infty$ or $\frac{d_{n,2}}{\epsilon_n}$ remains bounded. Define $R_{n,2}= \tilde{R}_{n,2}\lambda_n^{\frac{1}{2}}\tilde{\epsilon}_{n,2}$. Clearly, $ R_{n,2}\to +\infty$ as $n\to+\infty$. Hence, \eqref{yang} holds.

If (\ref{decay-1}) does not hold, we can apply similar arguments as before to show that there exists $P_n^3$ such that (\ref{localmaxi})-(\ref{faraway1}) are satisfied. For $P_n^i,i=1,2,3$, applying Lemma \ref{localbe} again, we can find $\phi^i_n\in C^\infty_0(\Omega)$ with supp$\phi^i_n\subset B_{R\varepsilon_n}(P_n^i)\cap \overline{\Omega}$ for some $R>0$, such that
\[
\int_\Omega\vert \nabla\phi^i_n\vert~^2dx+\int_\Omega[(\lambda-(p-1)\rho_nu_n^{p-2})(\phi^i_n)^2]dx<0.
\]
In light of (\ref{faraway1}), we observe that $\phi^1_n,\phi^2_n,\phi_n^{3}$ are
mutually orthogonal for sufficiently large $n$, which implies
$\lim\limits_{n\rightarrow+\infty}m(u_n)\ge3$. This leads to a contradiction with the fact that $m(u_n)\leq2$.
\end{proof}

In the subsequent analysis,
we show that $u_n$ exhibits exponential decay away from the blow-up points.
\begin{lemma}\label{decay}
Let $\{P_n^1\},\cdots,\{P_n^k\}$ be given in Lemma \ref{globalbe}.
 Then there exist constants $C_1>0$, $C_2>0$ such that, for some $R>0$,
 	\begin{equation}
		u_n(x)\leq C_1e^{C_1 R}\lambda_n^{\frac{1}{p-2}}\sum_{i=1}^k e^{-C_2\lambda_n^{\frac{1}{2}}|x-P_n^i|}, ~\forall x\in \overline{\Omega}\backslash \cup_{i=1}^k (B_{R\lambda_n^{-\frac{1}{2}}}(P_n^i)\cap \overline{\Omega}),
		\end{equation}
		
\end{lemma}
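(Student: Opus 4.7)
The proof is a comparison argument against an explicit exponential barrier. Since $u_n$ only satisfies a Neumann condition on $\partial\Omega$, one first extends it across the boundary using the diffeomorphism $\Phi:\Omega^\theta\to\Omega_\theta$ announced in this section's introduction: set $\tilde u_n(x):=u_n(\Phi(x))$ for $x\in\Omega^\theta$ and $\tilde u_n:=u_n$ on $\overline\Omega$. Because $\Phi$ restricts to the identity on $\partial\Omega$, the vanishing of $\partial u_n/\partial\nu$ ensures that the two pieces match in $C^1$, and a change of variables shows that $\tilde u_n$ solves a uniformly elliptic equation
\begin{equation*}
-L_n\tilde u_n+\lambda_n\tilde u_n=\tilde h_n(x,\tilde u_n)\quad\text{in }\tilde\Omega:=\overline\Omega\cup\Omega^\theta,
\end{equation*}
with $L_n=\Delta$ on $\overline\Omega$ and with $L_n-\Delta$, $\tilde h_n-h_{\rho_n}$ being bounded perturbations on the collar $\Omega^\theta$, all uniformly in $n$.

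I would next use Lemma~\ref{globalbe} to fix $\varepsilon>0$ small and $R>0$ large so that $\lambda_n^{-1/(p-2)}|u_n(x)|\le\varepsilon$ on $\{d_n(x)\ge R\lambda_n^{-1/2}\}$. Combined with the bound $|h_{\rho_n}(t)|\le C|t|^{p-1}$ following from $(f_1)$--$(f_2)$, this gives $|\tilde h_n(x,\tilde u_n)|\le C\varepsilon^{p-2}\lambda_n|\tilde u_n|$ on the corresponding region of $\tilde\Omega$. Choosing $\varepsilon$ so that $C\varepsilon^{p-2}\le 1/4$ and applying Kato's inequality yields
\begin{equation*}
-L_n|\tilde u_n|+\tfrac{3\lambda_n}{4}|\tilde u_n|\le 0
\end{equation*}
weakly on $\tilde D_n:=\tilde\Omega\setminus\bigcup_{i=1}^k B_{R\lambda_n^{-1/2}}(P_n^i)$.

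Define the barrier
\begin{equation*}
w(x):=C_1e^{C_1R}\lambda_n^{\frac{1}{p-2}}\sum_{i=1}^k e^{-C_2\lambda_n^{1/2}|x-P_n^i|},
\end{equation*}
with $C_2>0$ small and $C_1>0$ to be fixed. The elementary radial identity
\begin{equation*}
-\Delta e^{-C_2\lambda_n^{1/2}|y|}+C_2^2\lambda_n e^{-C_2\lambda_n^{1/2}|y|}=\frac{(N-1)C_2\lambda_n^{1/2}}{|y|}e^{-C_2\lambda_n^{1/2}|y|}\ge 0,
\end{equation*}
together with the fact that $L_n-\Delta$ produces only bounded corrections that can be absorbed into $(3\lambda_n/4-C_2^2\lambda_n)w$ for small $C_2$ and large $n$, gives $-L_nw+\tfrac{3\lambda_n}{4}w\ge 0$ on $\tilde D_n$. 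For boundary comparison, Lemma~\ref{localbe}(iv) and the uniform $C^2_{\text{loc}}$ limit of the rescaled sequence yield $|\tilde u_n|\le C_*\lambda_n^{1/(p-2)}$ on each inner sphere $\partial B_{R\lambda_n^{-1/2}}(P_n^i)\cap\tilde\Omega$, while $w\ge C_1e^{(C_1-C_2)R}\lambda_n^{1/(p-2)}$ there. On $\partial\tilde\Omega$, the extension assigns $\tilde u_n$ the value of $u_n$ at points of $\partial\Omega_\theta\subset\Omega$; for fixed small $\theta$ and $n$ large those points lie in the smallness region $\{d_n\ge R\lambda_n^{-1/2}\}$, so again $|\tilde u_n|\le\varepsilon\lambda_n^{1/(p-2)}\le w$. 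Picking $C_1\ge C_*+C_2+1$ ensures $|\tilde u_n|\le w$ on $\partial\tilde D_n$, and the weak maximum principle for $-L_n+\tfrac{3\lambda_n}{4}$ delivers $|\tilde u_n|\le w$ on $\tilde D_n$; restricting to $\overline\Omega$ gives the lemma.

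\textbf{Main obstacle.} The delicate step is the extension: the diffeomorphism contributes a first-order drift whose action on $w$ is of order $|\nabla w|\asymp\lambda_n^{1/2}w$, which sits on the \emph{same} $\lambda_n$-scale as the positive term $(N-1)C_2\lambda_n^{1/2}/|y|$ making the barrier a supersolution of the unperturbed operator. Closing the argument therefore forces $C_2$ to be chosen small independently of $n$, and relies essentially on the $\lambda_n$-coercivity that becomes available only in the blow-up regime $\lambda_n\to+\infty$. A secondary technicality is calibrating $\theta>0$ so that $\partial\Omega_\theta$ is disjoint from every $B_{R\lambda_n^{-1/2}}(P_n^i)$, which is automatic for fixed $\theta$ once $n$ is large because $\lambda_n^{-1/2}\to 0$.
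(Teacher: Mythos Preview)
Your overall strategy---reflect $u_n$ across $\partial\Omega$ via the diffeomorphism $\Phi$, derive a subsolution inequality on the extended domain away from the blow-up points, and compare with an exponential barrier---matches the paper's. However, there is a genuine gap in your boundary comparison, and it occurs precisely at the step you pass over most quickly.

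On the outer boundary $\partial\tilde\Omega$ (the boundary of $\overline\Omega\cup\Omega^\theta$), your barrier satisfies
\[
w(x)\;\le\; k\,C_1 e^{C_1R}\lambda_n^{\frac{1}{p-2}}\,e^{-C_2\lambda_n^{1/2}\,d_0},
\]
where $d_0>0$ is a fixed lower bound for $\min_i|x-P_n^i|$ on $\partial\tilde\Omega$ (of order $\theta$). Since $\lambda_n\to+\infty$, this is $o(\lambda_n^{1/(p-2)})$ and in particular is eventually much smaller than $\varepsilon\lambda_n^{1/(p-2)}$. Hence your claimed inequality $|\tilde u_n|\le\varepsilon\lambda_n^{1/(p-2)}\le w$ on $\partial\tilde\Omega$ is false for large $n$, and the maximum principle cannot be invoked on $\tilde D_n$ as written. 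This is not a technicality: your global barrier is simply the wrong size on the outer boundary, and no choice of $C_1,C_2,R$ rescues it.

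The paper avoids this by \emph{not} comparing on $\partial\tilde\Omega$ at all. Instead it works pointwise: for each $x_0\in A_n$ it uses a \emph{growing} radial barrier
\[
\phi_n(\sigma)=\lambda_n^{\frac{1}{p-2}}\frac{\cosh(\gamma\lambda_n^{1/2}\sigma)}{\cosh(\gamma\lambda_n^{1/2}r)},\qquad \sigma=|x-x_0|,
\]
on the ball $B_r(x_0)$ with $r=\mathrm{dist}(x_0,\partial A_n^*)$. This barrier equals $\lambda_n^{1/(p-2)}$ on $\partial B_r(x_0)$, which dominates $|\bar u_n|\le\varepsilon\lambda_n^{1/(p-2)}$ there automatically, so the comparison always closes. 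Evaluating at the centre gives $u_n(x_0)\le\lambda_n^{1/(p-2)}e^{-\gamma\lambda_n^{1/2}r}$, and only afterwards does one split into the two cases $r=\mathrm{dist}(x_0,\partial B_{R\lambda_n^{-1/2}}(P_n^i))$ or $r=\mathrm{dist}(x_0,\partial(\overline\Omega\cup\Omega^{\theta_*}))\ge\theta_*$; in the latter case the bound $e^{-\gamma\lambda_n^{1/2}\theta_*}$ is converted to the desired form using $|x_0-P_n^i|\le\mathrm{diam}(\Omega)$. The ``main obstacle'' you flagged---the first-order drift on the collar---is handled by the paper with explicit estimates leading to the admissible range $\gamma\le(4N+2C_{\Phi,\Omega})^{-1/2}$, but that is not where your argument breaks down.
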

\begin{proof}
The proof is inspired by \cite[Lemma 2.1]{LZ-2007} and \cite[Theorem 3.2]{EspPet}.
For any $\theta>0$, we set
$$\Omega_\theta:= \{x\in \overline{\Omega}: dist (x,\partial\Omega)<\theta\}, 
$$ 
and define the inner normal bundle
$$(\partial\Omega)_\theta :=\{(x,y):x\in \partial\Omega, y\in (-\theta,0]\nu_x\}, $$
where $\nu_x$ is the unit outer normal of $\partial\Omega$ at $x$.

Since $\partial\Omega$ is a smooth compact submanifold of $\mathbb{R}^N$, by the tubular neighborhood theorem, it follows that there exists a diffeomorphism $\Phi_{Ib}$ from $\Omega_\theta$ onto $(\partial\Omega)_\theta$.
More precisely, for any $x\in \Omega_\theta$, it is easily seen that there exists a unique $\bar{x}\in \partial\Omega$ such that ${\rm dist}(x, \bar{x})= {\rm dist}(x, \partial\Omega)$. Hence, we can define $\Phi_{Ib}(x):=(\bar{x}, -{\rm dist}(x, \bar{x})\nu_{\bar{x}})$ for any $x\in \Omega_\theta$.
Clearly, $\Phi_{Ib}(x)=x$ for $x\in \partial\Omega$.
 
Similarly, let
$$\Omega^\theta := \{x\in \mathbb{R}^N\backslash \Omega, dist (x,\partial\Omega)<\theta\},$$
and define the outer normal bundle
$$(\partial\Omega)^\theta:=\{(x,y):x\in \partial\Omega, y\in [0,\theta)\nu_x\}. $$
Then there exists a diffeomorphism $\Phi_{Ob}:\Omega^\theta \to (\partial\Omega)^\theta$ defined by $\Phi_{Ob}(x):=(\hat{x}, dist(x, \hat{x})\nu_{\hat{x}}), \forall x\in \Omega^\theta $. Here 
 $\hat{x}\in \partial\Omega$ is the unique element in $\partial \Omega$ such that ${\rm dist}(x, \hat{x})= {\rm dist}(x, \partial\Omega)$. Moreover, $\Phi_{Ob}|_{\partial\Omega}= {\rm Identity}$.
 
Consider the reflection $\Phi_C:(\partial\Omega)_\theta\to (\partial\Omega)^\theta$ defined by $\Phi_C((x,y)):=(x,-y)$.
Then, $\Phi:=\Phi_{Ib}^{-1}\circ \Phi_C^{-1}\circ \Phi_{Ob}$ is a diffeomorphism from $\Omega^\theta$ onto $\Omega_\theta$ and $\Phi |_{\partial\Omega}={\rm Identity}$.
Furthermore, we take $x=\Phi(z)=(\Phi_1(z),\cdots,\Phi_N(z))$ for $z\in\Omega^\theta$, $z=\Psi(x)=\Phi^{-1}(x)=(\Psi_1(x),\cdots,\Psi_N(x))$ for $x\in \Omega_\theta$, and
\begin{equation*}
	g_{ij}= \sum_{l=1}^N \frac{\partial \Phi_l}{\partial z_i}\frac{\partial \Phi_l}{\partial z_j},~~
	g^{ij}= \sum_{l=1}^n \frac{\partial \Psi_i}{\partial x_l}\frac{\partial \Psi_j}{\partial x_l}(\Phi (z)).
\end{equation*}
Then $g_{ij}|_{\partial \Omega}= g^{ij}|_{\partial\Omega}=\delta_{ij}$, where $\delta_{ij}$ is the Kronecker symbol.
For simplicity, denote $G=(g^{ij})$ , $g(x)=det(g_{ij})$ and $\hat{u}_n(x)=u_n(\Phi(x))$ for $x\in \Omega^\theta$.
%Then for any $\varphi\in H^1_0(\Omega_\theta)$, we have
%\begin{align*}
%	\int_{\Omega_\theta}\nabla_x \hat{u}_n \dot \nabla_x \varphi dx
%	&= \int_{\Omega^\theta} \left(\sum_{i,j=1}^N g^{ij} \frac{\partial \hat{u}_n}{\partial x_i} \frac{\partial \varphi}{\partial x_j}\right) \sqrt{g} d\Psi(x)\\
%	&=- \int_{\Omega^\theta} \varphi\sum_{i,j=1}^N \frac{\partial}{\partial x_j}
%	  \left(\sqrt{g}g^{ij} \frac{\partial \hat{u}_n}{\partial x_i} \right) d\Psi(x)\\
%	&=- \int_{\Omega_\theta} \frac{1}{\sqrt{g}} \varphi \sum_{i,j=1}^N \frac{\partial}{\partial x_j}
%	  \left(\sqrt{g}g^{ij} \frac{\partial \hat{u}_n}{\partial x_i} \right) dx\\
%	&=- \int_{\Omega_\theta}\varphi \Delta_x u dx,
%\end{align*}
Then $\hat{u}_n$ satisfies 
\begin{equation*}
\left\{\begin{array}{ll}
-L \hat{u}_n+ \sqrt{g}\lambda_n \hat{u}_n=\sqrt{g}h_{\rho_n}(\hat{u}_n) & {\rm in} \,~ \Omega^\theta,\\
\displaystyle\frac{\partial \hat{u}_n}{\partial \nu}=0 \, &{\rm on}\,~\partial \Omega,
\end{array}\right.
\end{equation*}
where
\[L \hat{u}_n= \sum_{i=1}^N \frac{\partial}{\partial x_i}\left(\sqrt{g}\sum_{j=1}^Ng^{ij}\frac{\partial \hat{u}_n}{\partial x_j}\right). \]
Next we define
\begin{equation*}
\bar{u}_n:=
\begin{cases}
u_n(x), x\in\Omega,\\
\hat{u}_n(x), x\in \Omega^\theta,		
\end{cases}
\bar{g}_{ij}:=
\begin{cases}
	\delta_{ij}, x\in \bar\Omega,\\
	g_{ij}, x\in \Omega^\theta,
\end{cases}
\bar{g}^{ij}:=
\begin{cases}
	\delta_{ij}, x\in \bar\Omega,\\
	g^{ij}, x\in \Omega^\theta,
\end{cases}
\end{equation*}
and $\bar{g}:= det(\bar{g}_{ij})$. Let
 $A(x, \xi)=(A_1(x, \xi),\cdots,A_N(x, \xi))$ for $\xi=(\xi_1,\cdots,\xi_N)$ with
\[A_i(x,\xi)=\sqrt{\bar{g}}\sum_{j=1}^N\bar{g}^{ij}\xi_j.\]
Then $\bar{u}_n$ weakly solves
\begin{equation}
	-div\left(A(x,\nabla \bar{u}_n)\right)+ \lambda_n \sqrt{\bar{g}}\bar{u}_n= \sqrt{\bar{g}} g_{\rho_n}(\bar{u}_n)~~{\rm in}~~\overline{\Omega}\cup \Omega^\theta.
\end{equation}

%Since $\Phi$ is a diffeomorphism,  $g_{ij}(x)$ is also a smooth function for $x\in \Omega^\theta$. For any $y\in \Omega^\theta\backslash \overline{\Omega}$, there exists a point $x^*\in \partial\Omega$ such that $dist (y, \partial \Omega)= |y-x^*|<\theta$.
%Then by Taylor expansion, we have
%\[g_{ii}(y)= g_{ii}(x^*)+ \sum_{i=1}^N \frac{\partial g_{ii}}{\partial x_i}(x^*) (y_i-x^*_i)+o(|y-x^*|)\]
 %and
%\[g_{ij}(y)= g_{ij}(x^*)+ \sum_{i=1}^N \frac{\partial g_{ij}}{\partial x_i}(x^*) (y_i-x^*_i)++o(|y-x^*|),~i\not= j.\]
%We see that $g_{ij}(x^*)=0$ and $g_{ii}(x^*)=1$, because of $x^*\in \partial\Omega$.
%In addition, we have $|y_i-x^*_i|\leq (\sum_{i=1}^N (y_i-x^*_i)^2)^{\frac{1}{2}}=dist(y, \partial\Omega)<\theta$.
%On the other hand, $\frac{\partial g_{ii}}{\partial x_i}$ and $\frac{\partial g_{ij}}{\partial x_i}$ are also smooth functions on $\Omega^\theta$, for every $i$. Since $\Omega^\theta$ is a bounded domain, there exists a constant $C_0>0$ such that
%$|\frac{\partial g_{ii}}{\partial x_i}|, |\frac{\partial g_{ij}}{\partial x_i}|\leq C_0$ for $x\in \Omega^\theta$.
%This leads to that  $g_{ij}\to 0$ and $g_{ii}\to 1$ for $\theta\to 0$.
%Therefore $g(x)=det(g_{ij})\to 1$ as $\theta\to 0$.
%Similarly, we can obtain $g^{ij}\to 0$ and $g^{ii}\to 1$ as $\theta\to 0$.

Given that $\Phi$ is a diffeomorphism, the functions  $g_{ij}(x), \frac{\partial g_{ii}}{\partial x_i}$ and $\frac{\partial g_{ij}}{\partial x_i}$ are all smooth on the domain $\Omega^\theta$. Consequently, there exists a constant $C_0>0$ such that $|\frac{\partial g_{ii}}{\partial x_i}|, |\frac{\partial g_{ij}}{\partial x_i}|\leq C_0$ are both bounded by $C_0$ for $x\in \Omega^\theta$.
Moreover, according to the Taylor expansion, as $\theta$ approaches $0$, the functions $g_{ij}$ and $g^{ij}$ tend to $0$, while $g_{ii}$ and $g^{ii}$ tend to $1$, and the determinant $g(x)$ converges to $1$.

For any $\tilde{x}\in \overline{\Omega}$, there exists $\beta>0$ such that $B_\beta (\tilde{x})\subset \overline{\Omega}\cup \Omega^\theta$. For $x\in B_\beta(\tilde{x})\backslash \tilde{x}$, denote $\sigma= |x-\tilde{x}|$.
Then, for any smooth increasing function $\phi(\sigma)$ we have
\begin{align*}
\left|\sum_{i=1}^N \frac{\partial}{\partial x_i}(\sqrt{g}g^{ij})\frac{\partial \phi}{\partial x_j}\right|&= \left|\sum_{i=1}^N  \frac{\partial}{\partial x_i}(\sqrt{g}g^{ij}) \frac{x_j-\tilde{x}_j}{\sigma} \phi'\right|\\
%&\leq \left[\bigg|  \max_{x\in B_\beta (\tilde{x})}   \frac{\partial}{\partial x_i}(\sqrt{g}g^{ij}) \bigg| \right] \left|\frac{x_j-\tilde{x}_j}{\sigma} \right| \phi'\\
%&\leq \left[\bigg| \max_{x\in \overline{\Omega}\cup \Omega^\theta}   \frac{\partial}{\partial x_i}(\sqrt{g}g^{ij}) \bigg| \right]  \frac{\beta}{\sigma} \phi'\\
&\leq\sum_{i=1}^N  \bigg| \max_{x\in \overline{\Omega}\cup \Omega^\theta}   \frac{\partial}{\partial x_i}(\sqrt{g}g^{ij}) \bigg|\phi' \frac{diam(\overline{\Omega}\cup \Omega^\theta)}{\sigma}\\
&\le  \frac{C_{\Phi ,\Omega}\phi'}{\sigma}
\end{align*}
%which implies that
%\begin{align*}
%	\left|\sum_{i=1}^N \frac{\partial}{\partial x_i}(\sqrt{g}g^{ij})\frac{\partial \phi}{\partial x_j}\right| \leq \frac{C_{\Phi ,\Omega}\phi'}{\sigma}
%\end{align*}
for some $C_{\Phi ,\Omega}>0$, where $\phi'=\frac{d\phi(\sigma)}{d\sigma}$. 
Moreover, for $\theta>0$ small enough, there exist small constants $\delta_0, \delta_1$ such that
\begin{align*}
		\left|\sqrt{g} g^{ii}\frac{\partial^2 \phi}{\partial x_i^2}\right|
  &=\left|\sqrt{g} g^{ii} \left(\frac{\phi'}{\sigma}+\phi''\frac{(x_i-\tilde{x}_i)^2}{\sigma^2}- \phi'\frac{(x_i-\tilde{x}_i)^2}{\sigma^3}\right) \right|\\
&\leq (1+\delta_0) \left(\frac{\phi'}{\sigma}+ |\phi''|\frac{(x_i-\tilde{x}_i)^2}{\sigma^2}+\phi'\frac{(x_i-\tilde{x}_i)^2}{\sigma^3}\right),
\end{align*}
and
\begin{equation*}\label{writing}
	\begin{split}
		\left|\sqrt{g} g^{ij}\frac{\partial^2 \phi}{\partial x_j \partial x_i}\right|
   &=\left| \sqrt{g} g^{ij} \left(\phi''\frac{(x_i-\tilde{x}_i)(x_j-\tilde{x}_j)}{\sigma^2}- \phi'\frac{(x_i-\tilde{x}_i)(x_j-\tilde{x}_j)}{\sigma^3}\right) \right|\\
  &\leq |\sqrt{g} g^{ij}| \left(\frac{1}{2} |\phi''| \frac{(x_i-\tilde{x}_i)^2}{\sigma^2}+ \frac{1}{2}\phi'\frac{(x_i-\tilde{x}_i)^2}{\sigma^3}\right)\\
  &+|\sqrt{g} g^{ij}|\left(\frac{1}{2} |\phi''|\frac{(x_j-\tilde{x}_j)^2}{\sigma^2}+ \frac{1}{2}\phi'\frac{(x_j-\tilde{x}_j)^2}{\sigma^3}\right)\\
  &\leq \delta_1 \left(\frac{1}{2} |\phi''| \frac{(x_i-\tilde{x}_i)^2}{\sigma^2}+ \frac{1}{2}\phi'\frac{(x_i-\tilde{x}_i)^2}{\sigma^3}\right)\\
  &+\delta_1 \left(\frac{1}{2} |\phi''|\frac{(x_j-\tilde{x}_j)^2}{\sigma^2}+ \frac{1}{2}\phi'\frac{(x_j-\tilde{x}_j)^2}{\sigma^3}\right), ~{\rm for}~ i\not= j.
	\end{split}
\end{equation*}
%By direct calculation, we get
%\begin{equation*}\label{pad}
%	\begin{split}
%		\left|\sum_{i=1}^N\sqrt{g} g^{ii}\frac{\partial^2 \phi}{\partial x_i^2} \right|\leq (1+\delta_0)\left(|\phi''|+\frac{N-1}{\sigma}\phi' \right),
%	\end{split}
%\end{equation*}
%\begin{align*}
%	&\left|\sqrt{g}\sum_{i=1}^N\left( g^{i1}\frac{\partial^2 \phi}{\partial x_1\partial x_i}\right)+\sum_{i=1}^N \left( g^{i2}\frac{\partial^2 \phi}{\partial x_2\partial x_i}\right)+\cdots+\sum_{i=1}^N \left( g^{iN}\frac{\partial^2 \phi}{\partial x_N\partial x_i}\right) \right| \\
	%&\leq \left|\sum_{i=1}^N\sqrt{g} g^{ii}\frac{\partial^2 \phi}{\partial x_i^2}\right| +  \sum_{i=1}^N \delta_1 (N-1) \left(|\phi''| \frac{(x_i-\tilde{x}_i)^2}{\sigma^2}+ \phi'\frac{(x_i-\tilde{x}_i)^2}{\sigma^3}\right)\\
%	&\leq (1+\delta_0)\left(\phi''+\frac{N-1}{\sigma}\phi' \right) +\delta_1(N-1)\left(|\phi''|+\frac{1}{\sigma}\phi' \right).
%\end{align*}
 Then
\begin{equation}\label{rongyao}
	\begin{split}
		\left|\sum_{i=1}^N \frac{\partial}{\partial x_i}\left(\sqrt{g}\sum_{j=1}^Ng^{ij}\frac{\partial \phi}{\partial x_j}\right)\right|
	%&=\left| \sum_{i=1}^N \frac{\partial}{\partial x_i}\left\{\sqrt{g}g^{i1}\frac{\partial \phi}{\partial x_1}+\cdots+ \sqrt{g}g^{iN}\frac{\partial \phi}{\partial x_N}  \right\} \right|\\
	%&=\bigg| \sqrt{g}\sum_{i=1}^N\left( g^{i1}\frac{\partial^2 \phi}{\partial x_1\partial x_i}\right)+ \sum_{i=1}^N \frac{\partial}{\partial x_i}(\sqrt{g}g^{i1})\frac{\partial \phi}{\partial x_1}\\
	%&~~+\cdots + \sqrt{g}\sum_{i=1}^N\left( g^{iN}\frac{\partial^2 \phi}{\partial x_N\partial x_i}\right)+ \sum_{i=1}^N \frac{\partial}{\partial x_i}(\sqrt{g}g^{iN})\frac{\partial \phi}{\partial x_N}\bigg|\\
	%&\leq (1+\delta_0)\left(|\phi''|+\frac{N-1}{\sigma}\phi' \right) \\
%&~~+\delta_1(N-1)\left(|\phi''|+\frac{1}{\sigma}\phi' \right)+\frac{C\phi'}{\sigma}\\
	&\leq 2 |\phi''| + \frac{2(N-1)}{\sigma}\phi' + \frac{C_{\Phi ,\Omega}\phi'}{\sigma}.	
	\end{split}
\end{equation}

From now, let us fix $\theta_*\in (0, \theta)$ such that $\frac{1}{2}\leq\sqrt{g}\leq \frac{3}{2}$ and \eqref{rongyao} holds. 
Define $$
A_n^*:=\{x\in \overline{\Omega}\cup \Omega^{\theta_*}: d_n(x)\geq R\lambda_n^{-1/2}\}.
$$
In view of \eqref{decay}, for every $\epsilon\in (0,1)$ small,
to be chosen later, there exist $R^*>0$ and $n_R\in \mathbb{N}$ large such that
\begin{equation*}\label{feng}
	\max_{\{x\in \overline{\Omega} : d_n(x)\geq R\lambda_n^{-1/2}\}}|u_n|\leq \lambda_n^{\frac{1}{p-2}}\epsilon,~\forall R> R^*, ~ \forall n\geq n_R.
\end{equation*}
 Let $A_n: =\{x\in \overline{\Omega}: d_n(x)\geq R\lambda_n^{-1/2}\}$. Clearly,
 \begin{equation*}
 	|u_n(x)|^{p-2}\leq \lambda_n\epsilon^{p-2},~x\in A_n,  ~ \forall n\geq n_R.
 \end{equation*}
 Note that there exists a constant $n_\theta>0$ large enough such that
 for $n>n_\theta$, for any $i\in \{1,\cdots, k\}$, we have
 $B_{\lambda_n^{-1/2}R}(P_n^i)\subset \Omega^{\theta_*}\cup \Omega_{\theta_*}$.
 Then we can deduce that $A_n\subset A_n^*$, for $n>\max\{n_\theta, n_R\}:= n_{\theta R}$.
 From the definition of $\bar{u}_n$, we get
 \[|\bar{u}_n(x)|^{p-2}\leq \lambda_n \epsilon^{p-2}, ~x\in A_n^*,~\forall n> n_{\theta R}. \]
Using $(f_1)-(f_2)$, we conclude that
 \begin{equation}\label{lasa}
 -div\left(A(x,\nabla \bar{u}_n)\right)+ \frac{\lambda_n}{2}\bar{u}_n\leq 0, ~\forall x\in A_n^*
 \end{equation}
 holds for sufficiently small $\epsilon>0$.

For any $x_0\in A_n$ such that $B_r(x_0)\subset A_n^*$, consider the function
\begin{equation*}
	\phi_n(\sigma)=\phi_n(|x-x_0|)=\lambda_n^{\frac{1}{p-2}}\frac{\cosh{(\gamma \lambda_n^{\frac{1}{2}}\sigma})}{\cosh{(\gamma \lambda_n^{\frac{1}{2}}r})}.
\end{equation*}
Clearly, $\phi'(\rho)>0$ and $\phi''(\rho)>0$. By direct computations, we obtain
\begin{equation}\label{chichi}
	\begin{split}
		& 2|\phi_n''|+\frac{2(N-1)}{\sigma}\phi_n'+ \frac{C_{\Phi,\Omega}}{\sigma}\phi_n'- \frac{\lambda_n}{2}\phi_n\\
	%&=2\gamma^2\lambda_n \phi_n+ \frac{2(N-1)}{\sigma}\gamma\lambda_n^{\frac{1}{2}}\phi_n\tanh{(\gamma\lambda_n^{\frac{1}{2}}\sigma)}+ \frac{C_{\Phi,\Omega}}{\sigma}\gamma\lambda_n^{\frac{1}{2}}\phi_n\tanh{(\gamma\lambda_n^{\frac{1}{2}}\sigma)}-\frac{\lambda_n}{2}\phi_n\\
	&=\lambda_n\phi_n \left(2\gamma^2+ (2(N-1)+C_{\Phi,\Omega})\gamma^2 \frac{\tanh{(\gamma \lambda_n^{\frac{1}{2}}\sigma)}}{\gamma\lambda_n^{\frac{1}{2}}\sigma}-\frac{1}{2} \right)\\
	&\leq \lambda_n\phi_n \left(2\gamma^2+ (2(N-1)+C_{\Phi,\Omega})\gamma^2 -\frac{1}{2} \right)\leq 0,
	\end{split}
\end{equation}
for any $\gamma\in (0, \gamma_*]$, where $\gamma_*=(4N+2C_{\Phi,\Omega})^{-\frac{1}{2}}$. Then, fixing $\gamma\in (0, \gamma_*]$, by \eqref{rongyao} and \eqref{chichi} it follows that
\begin{equation}\label{baile}
	\begin{split}
		div(A(x, \nabla \phi_n))-\frac{\lambda_n}{2}\phi_n =\sum_{i=1}^N \frac{\partial}{\partial x_i}\left(\sqrt{\bar{g}}\sum_{j=1}^N\bar{g}^{ij}\frac{\partial \phi}{\partial x_j}\right)-\frac{\lambda_n}{2}\phi_n
		%&\leq \left|\sum_{i=1}^N \frac{\partial}{\partial x_i}\left(\sqrt{\bar{g}}\sum_{j=1}^N\bar{g}^{ij}\frac{\partial \phi}{\partial x_j}\right)\right|-\frac{\lambda_n}{2}\phi_n \\
		%&\leq 2|\phi_n''|+\frac{2(N-1)}{\sigma}\phi_n'+ \frac{C_{\Phi,\Omega}}{\sigma}\phi_n'- \frac{\lambda_n}{2}\phi_n\\
		\leq 0, ~\forall x\in B_r(x_0).
	\end{split}
\end{equation}
%Hence, for $\gamma\in (0,\gamma_*)$, we have
%\begin{align*}
%	&-div(A(x, \nabla \bar{u}_n))+ \frac{\lambda_n}{2}\bar{u}_n\leq 0 ~~{\rm in}~~B_r(x_0),\\
%	&-div(A(x, \nabla \phi_n))+ \frac{\lambda_n}{2}\phi_n\geq 0 ~~{\rm in}~~B_r(x_0).
%\end{align*}
%On the other hand, it is easily seen that
In addition, for $x\in \partial B_r(x_0)$, we have $\left(\phi_n -\bar{u}_n \right) \geq \lambda_n^{\frac{1}{p-2}}(1-\epsilon)>0$.
Then, together with (\ref{lasa}) and (\ref{baile}), by the comparison principle (\cite[Theorem 10.1]{PS-2004}) it follows that
$\bar{u}_n\leq \phi_n~~{\rm in}~~B_r(x_0)$,
%In particular, $\bar{u}_n(x_0)\leq \phi_n(x_0)$.
which implies that $u_n(x_0)\leq \lambda_n^{\frac{1}{p-2}} e^{-\gamma\lambda_n^{\frac{1}{2}}r}$.

Take $r={\rm dist}(x_0, \partial A_n^*)$. We distinguish the following two cases:
\begin{itemize}
    \item [(i)] $dist(x_0, \partial A_n^*)= dist(x_0, B_{R\lambda_n^{-1/2}}(P_n^i))$ for some $i\in \{1,\cdots,k\}$;
	\item [(ii)] $dist(x_0, \partial A_n^*)= dist(x_0, \partial(\overline{\Omega}\cup \Omega^{\theta_*}))$.
\end{itemize}
For case (i), we have $|x_0- P_n^i|= r+ R\lambda_n^{-\frac{1}{2}}$. Then
\[u_n(x_0)\leq e^{\gamma R}\lambda_n^{\frac{1}{p-2}} e^{-\gamma\lambda_n^{\frac{1}{2}}r- \gamma R}=e^{\gamma R}\lambda_n^{\frac{1}{p-2}} e^{-\gamma \lambda_n^{\frac{1}{2}}|x_0-P_n^i|}.\]
For case (ii), we deduce that
\[u_n(x_0)\leq \lambda_n^{\frac{1}{p-2}} e^{-\gamma \lambda_n^{\frac{1}{2}}\theta_*}\leq \lambda_n^{\frac{1}{p-2}} e^{-\gamma \lambda_n^{\frac{1}{2}} \frac{\theta_*}{diam (\Omega)}|x_0-P_n^i|}.\]
Hence, since $x_0$ is arbitrary, we conclude that there exist $C_1, C_2>0$ such that
\begin{equation*}
	u_n(x)\leq C_1 e^{C_1 R}\lambda_n^{\frac{1}{p-2}}\sum_{i=1}^k e^{-C_2\lambda_n^{\frac{1}{2}}|x-P_n^i|}, ~\forall x\in A_n.
\end{equation*}
This completes the proof.
\end{proof}

\section{Proof of Theorem \ref{mainresult}}
In this section, we complete the proof of Theorem \ref{mainresult}.
We have previously established a sequence of mountain pass type critical points $\{u_{\rho_n}\}$ of $J_{\rho_n}$ on $\mathcal{S}_c$, which have uniformly bounded Morse indices. These critical points are constructed for a sequence $\rho_n\to 1^-$. Building on the blow-up analysis conducted in Section 4, we now present the following proposition.

 \begin{proposition}\label{compactness}
	Let $\{u_n\}\subset H^1(\Omega)$ be a sequence of solutions to \eqref{withn}, corresponding to some $\{\lambda_n\}\subset \mathbb{R}$ and $\rho_n\to 1^-$. Suppose that
	\[\int_{\Omega}|u_n|^2 dx=c,~m(u_n)\leq 2,~\forall n, \]
	for some $c>0$, and the energy levels $\{c_n:=J_{\rho_n}(u_n)\}$ are bounded.
	Then, the sequence of solution pairs $\{(u_n, \lambda_n)\}$ is bounded in $H^1(\Omega)\times \mathbb{R}$.
	Moreover, $\{u_n \}$ is a bounded Palais-Smale sequence for the functional $J$ constrained on $\mathcal{S}_c$ at level $c_1$.
\end{proposition}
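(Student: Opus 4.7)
My plan is to rule out $H^1$-unboundedness of $\{u_n\}$ by combining the blow-up description of Section~\ref{blowup} with the prescribed mass, and then to extract the Palais--Smale property at level $c_1$ from the smallness of $1 - \rho_n$. I first claim that $\|u_n\|_\infty$ is bounded. Arguing by contradiction, suppose $\|u_n\|_\infty \to +\infty$, and perform the local rescaling at a sequence of extremum points $P_n$ with $\tilde\epsilon_n = a_0^{-1/2}|u_n(P_n)|^{-(p-2)/2}\to 0$ as in Lemma~\ref{localbe}. The Farina and Yu classifications of bounded finite Morse index solutions to $-\Delta \tilde u = |\tilde u|^{p-2}\tilde u$ on $\mathbb{R}^N$ or on $\mathbb{R}^N_+$ with Neumann condition (invoked in the proof of Lemma~\ref{localbe}) rule out the degenerate limit $\tilde\lambda = 0$, forcing $\lambda_n \to +\infty$. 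Lemma~\ref{globalbe} then yields $k \in \{1,2\}$ blow-up points $\{P_n^1\},\dots,\{P_n^k\}$, and the scaling identity from Lemma~\ref{localbe}(iv) with $q = 2$ gives
\begin{equation*}
\int_{B_{R\epsilon_n}(P_n^i)\cap \Omega} u_n^2\,dx \;\sim\; \lambda_n^{\,2/(p-2)\,-\,N/2}\int_{B_R(0)\cap D} v^2\,dy,
\end{equation*}
where $D$ is $\mathbb{R}^N$ or $\mathbb{R}^N_+$ according to whether the blow-up is interior or on $\partial\Omega$. Since the growth is $L^2$-supercritical, i.e.\ $p > 2_* = 2 + 4/N$, the exponent $2/(p-2) - N/2$ is strictly negative, so each contribution vanishes as $\lambda_n \to +\infty$. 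Squaring the estimate of Lemma~\ref{decay} and changing variables $y = \lambda_n^{1/2}(x - P_n^i)$ shows that $\int_{\Omega\setminus \bigcup_i B_{R\epsilon_n}(P_n^i)} u_n^2\,dx = O\bigl(\lambda_n^{2/(p-2) - N/2}\bigr) \to 0$ as well. Therefore $\int_\Omega u_n^2\,dx \to 0$, contradicting $\int_\Omega u_n^2\,dx = c > 0$. Hence $\|u_n\|_\infty$ is bounded; testing \eqref{withn} against $u_n$ together with Lemma~\ref{boundedbelow} then gives boundedness of $\{\lambda_n\}$ and of $\{\|\nabla u_n\|_2\}$, so $(u_n,\lambda_n)$ is bounded in $H^1(\Omega)\times \mathbb{R}$.

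For the Palais--Smale claim, since $u_n$ is an exact critical point of $J_{\rho_n}|_{\mathcal{S}_c}$ with Lagrange multiplier $\lambda_n$, I have $J'_{\rho_n}|_{\mathcal{S}_c}(u_n) = 0$ and $J_{\rho_n}(u_n) = c_{\rho_n}$. The identities
\begin{equation*}
J_1(u_n) - J_{\rho_n}(u_n) = (\rho_n - 1)\int_\Omega F_1(u_n)\,dx, \qquad (J'_1 - J'_{\rho_n})(u_n)\cdot\phi = (\rho_n-1)\int_\Omega f_1(u_n)\phi\,dx,
\end{equation*}
combined with the $H^1$-boundedness of $\{u_n\}$ and $(f_1)$--$(f_2)$, imply $J_1(u_n) - c_{\rho_n} \to 0$ and $\|J'_1|_{\mathcal{S}_c}(u_n)\| \to 0$. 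Since $F_1 \ge 0$, the level function $\rho \mapsto c_\rho$ is nonincreasing, and fixing a near-optimal admissible path for $J_1$ in the definition of $c_1$ yields $c_{\rho_n} \to c_1$ as $\rho_n \to 1^-$ by upper semicontinuity; hence $J_1(u_n) \to c_1$, completing the proof.

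The hardest step will be the $L^2$-mass computation that drives the contradiction in Step~1: it is the combination of the supercritical condition $p > 2_*$, which makes the blow-up rescaling decay in the $L^2$ norm, together with the boundary-to-interior uniform exponential decay of Lemma~\ref{decay}, obtained via the reflection/extension construction of Section~\ref{blowup} and essential for handling blow-up points landing on $\partial\Omega$, that forces the total mass to vanish.
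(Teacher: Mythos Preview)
Your proof is correct and uses essentially the same ingredients as the paper: the Farina/Yu Liouville theorems to exclude the degenerate limit $\tilde\lambda=0$, the $L^2$-mass computation exploiting $p>2_*$ together with the exponential decay of Lemma~\ref{decay} to rule out $\lambda_n\to+\infty$, and the monotonicity of $\rho\mapsto c_\rho$ for the Palais--Smale conclusion. The only difference is organizational: the paper first assumes $\lambda_n\to+\infty$ and derives the mass contradiction, then separately assumes $\|u_n\|\to\infty$ and (now knowing $\lambda_n$ is bounded, hence $\tilde\lambda=0$) invokes Farina/Yu directly; you instead start from $\|u_n\|_\infty\to+\infty$, use Farina/Yu to force $\lambda_n\to+\infty$, and then run the mass contradiction---the two routes are logically equivalent.
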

\begin{proof}
	First, we show that $\{\lambda_n\}$ is bounded. Suppose, by contradiction, that $\lambda_n\to +\infty$.
By Lemma \ref{globalbe}, there exist at most $k$ blow-up limits $\{P_n^1\},\cdots,\{P_n^k\}$ with $k\leq 2$ in $\overline{\Omega}$.	
In the following, we denote by $\{v_n^i \}$ the scaled sequence around $\{P_n^i\}$, and by $v^i$ the limits of $\{v_n^i\}$.
Note that for these blow-up points $\{P_n^1\},\cdots,\{P_n^k\}$, it may hold
\begin{equation*}
	\lambda_n^{\frac{1}{2}}dist(P_n^i,\partial \Omega)\to +\infty ~~{\mbox or}~~\lambda_n^{\frac{1}{2}}dist(P_n^i,\partial \Omega)\to d\geq 0.
\end{equation*}
Without loss of generality,  assume there exists an integer $k_1\in \{0, 1,\cdots, k\}$ such that 
\begin{eqnarray*}
\lambda_n^{\frac{1}{2}}dist(P_n^i,\partial \Omega)\to +\infty~~ \mbox{for every}~~ i\in \{1,\cdots, k_1\},
 \end{eqnarray*}
 and
 \begin{eqnarray*}
\lambda_n^{\frac{1}{2}}dist(P_n^j,\partial \Omega)\to d\geq 0~~ \mbox{for every}~~ j\in \{k_1+1,\cdots, k\}.
 \end{eqnarray*}

On the one hand, we can deduce that for any $R>0$,
	\begin{equation}\label{esitimate}
		\bigg| \lambda_n ^{\frac{N}{2}-\frac{2}{p-2}}\int_{\Omega}u_n^2 dx-\sum_{i=1}^{k_1}\int_{B_R(0)}|v_n^i|^2 dx-\sum_{i=k_1+1}^{k}\int_{B_R(0)\cap {\Omega}_n}|v_n^i|^2 dx \bigg|\to +\infty.
	\end{equation}
	In fact, since $p\in (2_*, 2^*)$, the first term satisfies
	\begin{equation*}
		\bigg| \lambda_n ^{\frac{N}{2}-\frac{2}{p-2}}\int_{\Omega}u_n^2 dx \bigg|=\lambda_n ^{\frac{N}{2}-\frac{2}{p-2}}c\to +\infty.
	\end{equation*}
By Lemma \ref{globalbe}, we have
	\begin{equation*}
		\sum_{i=1}^{k_1}\int_{B_R(0)}|v_n^i|^2 dx\to \sum_{i=1}^k \int_{B_R(0)} |v^i|^2 dx,
	\end{equation*}
	and
	\begin{equation*}
		\sum_{i=k_1+1}^{k}\int_{B_R(0)\cap \overline{\Omega}_n}|v_n^i|^2 dx\to \sum_{i=1}^k \int_{B_R(0)\cap \mathbb{R}_+^N} |v^i|^2 dx,
	\end{equation*}
 which imply that (\ref{esitimate}) holds.

	On the other hand, by Lemma \ref{decay}, there exist constants $C, C'>0$
	such that
\begin{eqnarray}
\begin{aligned}
     &\bigg| \lambda_n ^{\frac{N}{2}-\frac{2}{p-2}}\int_{\Omega}u_n^2 dx
     -\sum_{i=1}^{k_1}\int_{B_R(0)}|v_n^i|^2 dx-\sum_{i=k_1+1}^{k}\int_{B_R(0)\cap {\Omega}_n}|v_n^i|^2 dx \bigg|\\
     &= \lambda_n^{\frac{N}{2}-\frac{2}{p-2}}\int_{\Omega\backslash \cup_{i=1}^{k} (B_{R\lambda_n^{-1/2}}(P_n^i)\cap \Omega)}u_n^2 dx\\
    % &\leq C_1 e^{2C_1 R}\lambda_n^{\frac{N}{2}}\sum_{i=1}^{k}\int_{\Omega\backslash \cup_{i=1}^{k}(B_{R\lambda_n^{-1/2}}(P_n^i)\cap \Omega)}e^{-2C_2\lambda_n^{\frac{1}{2}}|x-P_n^i|} dx\\
     &\leq C_1  e^{2C_1 R}\lambda_n^{\frac{N}{2}}\sum_{i=1}^{k}\int_{\mathbb{R}^N\backslash \cup_{i=1}^{k}B_{R\lambda_n^{-1/2}}(P_n^i)}e^{-2C_2\lambda_n^{\frac{1}{2}}|x-P_n^i|} dx\\
     &\leq C e^{C R}\sum_{i=1}^{k}\int_{\mathbb{R}^N\backslash B_R(0)} e^{-2C_2|y|} dy \leq C'=C'(R).
\end{aligned}
\end{eqnarray}
Taking $n\to +\infty$, we obtain
a contradiction to \eqref{esitimate}.	
Hence, $\{\lambda_n\}$ is bounded.

Now, we show that $\{u_n\}$ is bounded in $H^1(\Omega)$.
By contradiction, we suppose that $\|u_n\|\to \infty$. Using standard arguments, we have
$\|u_n\|_{L^\infty} \to \infty.$ Take $P_n\in \overline{\Omega}$ such that $|u_n(P_n)|=\|u_n\|_{L^{\infty}}$.
Let $\tilde{\epsilon}_n=a_0^{-\frac{1}{2}}|u_n(P_n)|^{-\frac{p-2}{2}}$ and define
\begin{equation*}
	\tilde{u}_n(x):=a_0^{\frac{1}{p-2}}\tilde{\epsilon}_n^{\frac{2}{p-2}}u_n(\tilde{\epsilon}_nx+ P_n)~~{\rm for}~~ x\in \tilde{\Omega}_n:=\frac{\Omega-P_n}{\tilde{\epsilon}_n}.
\end{equation*}
Clearly, $\tilde{u}_n$ satisfies \eqref{chua}. Using the boundedness of $\{\lambda_n\}$, we get $\lambda_n \tilde{\epsilon}_n^2 \to 0$ as $n\to \infty$.
Then, up to a subsequence, $\tilde{u}_n\to \tilde{u}$ in $C^2_{loc}(\overline{D})$, where $\tilde{u}$ is a finite Morse index solution of
\begin{equation}
\left\{\begin{array}{ll}
-\Delta \tilde{u} =|\tilde{u}|^{p-2}\tilde{u}  & {\rm in} \,~ D,\\
|\tilde{u}(x)|\leq |\tilde{u}(0)|=1  & {\rm in} \,~ D, \\
\frac{\partial \tilde{u}}{\partial \nu}=0 \, &{\rm on}\,~\partial D,
\end{array}\right.
\end{equation}
where $D$ is either $\mathbb{R}^N$ or a half space.
By invoking \cite[Theorem 2]{Farina} and \cite[Proposition 2.1]{Yu} respectively, we conclude that $\tilde{u}\equiv 0$. This
contradicts to the fact that $|\tilde{u}(0)|=1$.
Therefore, we deduce that $\{u_n\}$ is a bounded sequence in $H^1(\Omega)$. Consequently, employing standard arguments, we establish that $\{u_n \}$ is a bounded Palais-Smale sequence for $J|_{\mathcal{S}_c}$ at level $c_1$.
\end{proof}

\begin{proof}[Completion of proof of Theorem \ref*{mainresult}] Let $\{u_n\}$ be the sequence given in Proposition \ref{compactness} for some $c\in (0, c^*)$.
By the compact embedding $H^1(\Omega)\hookrightarrow L^r(\Omega)$ for $r\in [1,2^*)$, and using similar arguments as in Section 3,
we can deduce that $u_{n}\to u$ strongly in $H^1(\Omega)$. This, in turn, implies that $u$ is a mountain pass type normalized solution of \eqref{schrodinger}.
\end{proof}

\section*{Acknowledgements}
Xiaojun Chang was partially supported by NSFC (12471102) and the
Research Project of the Education Department of Jilin Province (JJKH20250296KJ). The research of V.D.~R\u adulescu was supported by the grant ``Nonlinear Differential Systems in Applied Sciences'' of
the Romanian Ministry of Research, Innovation and Digitization, within PNRR-III-C9-2022-I8/22.

{\small
}

\end{document}